\newtheorem{theorem}{Theorem}[section]
\newtheorem{corollary}[theorem]{Corollary}
\newtheorem{lemma}[theorem]{Lemma}
\newtheorem{proposition}[theorem]{Proposition}
\theoremstyle{definition}
\newtheorem{remark}[theorem]{Remark}
\title[Stability of determining the potential from partial boundary]
{Stability of determining the potential from partial boundary data in a Schr\"odinger equation in the high frequency limit}
\author[Mourad Choulli]{}
\subjclass{Primary: 35R30.}
\keywords{Schr\"odinger equation in the high frequency limit, potential, partial Dirichlet-to-Neumann map, stability inequality, quantitative unique continuation, quantitative Runge approximation.}
\begin{document}
\maketitle

\centerline{\scshape
Mourad Choulli$^{{\href{mailto:mourad.choulli@univ-lorraine.fr}{\textrm{\Letter}}}}$
}

\medskip

{\footnotesize
\centerline{Universit\'e de Lorraine, France}
}

\bigskip


\begin{abstract}
We establish stability inequalities for the problem of determining the potential, appearing in a Sch\"odinger equation, from  partial boundary data in the high frequency limit. These stability inequalities hold under the assumption that the potential is known near the boundary of the domain under consideration.
\end{abstract}

\section{Introduction}\label{section1}

\subsection{Assumptions and notations}

Let $\mathrm{M}$ be a connected compact  $C^{1,1}$\linebreak Riemannian manifold, of dimension $n\ge3$, with boundary $\partial \mathrm{M}$ and endowed with $C^{2,1}$ metric tensor $g$.

Throughout this text, $\Delta=\Delta_g$ denotes the usual Laplace-Beltrami operator which is given in a local coordinates system by
\[
\Delta u=\frac{1}{\sqrt{|g|}}\sum_{j,k=1}^n\frac{\partial}{\partial x_j}\left(\sqrt{|g|}g^{jk}\frac{\partial}{\partial x_k}u \right).
\]

Recall that the space $H_\Delta(\mathrm{M})$ is defined as follows
\[
H_\Delta (\mathrm{M})=\{u\in H^1(\mathrm{M});\; \Delta u\in L^2(\mathrm{M})\},
\]
 and for $q\in L^\infty(\mathrm{M},\mathbb{R})$, consider the operator
\[
A_qu=(-\Delta +q)u,\quad u\in D(A)=H_\Delta (\mathrm{M})\cap H_0^1 (\mathrm{M}).
\]
The bounded sesquilinear form associated to $A_q$ is given by
\[
\mathfrak{a}_q(u,v)=\int_{\mathrm M} (\nabla u |\overline{\nabla v})_gd\mu +\int_{\mathrm{M}}qu\overline{v}d\mu,\quad u,v\in H_0^1(\mathrm{M}),
\]
where $d\mu$ is the Riemannian measure on $\mathrm{M}$ and $(\cdot|\cdot)_g$ is the scalar product associated to the metric $g$. That is we have
\[
(A_qu|v)=\mathfrak{a}_q(u,v),\quad u\in D(A),\; v\in H_0^1(\mathrm{M}).
\]
Here and henceforth, $(\cdot|\cdot)$ denotes the scalar product of $L^2(\mathrm{M})$.

It is a classical result that $\sigma(A_q)$, the spectrum of $A_q$, consists in a sequence of eigenvalues
\[
-\infty < \lambda_q^1\le \ldots \lambda_q^k\le \ldots \quad \mbox{and}\quad \lambda_q^k\rightarrow \infty \; \mbox{as}\; k\rightarrow \infty.
\]
Moreover, there exists $(\phi_q^k)$ an orthonormal basis of $L^2(\mathrm{M})$ consisting of \linebreak eigenfunctions, where each $\phi_q^k$ is an eigenfunction for $\lambda_q^k$ and, according to the elliptic regularity, $\phi_q^k\in H^2(\mathrm{M})$.

The resolvent of $A_q$ is defined on the resolvent set $\rho(A_q)=\mathbb{C}\setminus \sigma(A_q)$ according to the formula
\[
R_q(\mu)=(A_q-\mu)^{-1},\quad \mu \in \rho(A_q).
\]

Fix $q_0\in L^\infty(\mathrm{M},\mathbb{R})$, $\kappa_0>\|q_0\|_{\infty}$ and $\lambda\in   \rho(A_{q_0})$ satisfying $\lambda \ge \lambda_0$, where $\lambda_0>0$ is some constant. Then consider the set
\[
\mathscr{Q}_\lambda=\left\{q=q_0+q'\in L^\infty(\mathrm{M},\mathbb{R});\; \|q'\|_{L^\infty(\mathrm{M})}< \min \left(\|R_{q_0}(\lambda)\|^{-1},\kappa_1\right)\right\},
\]
where $\|R_{q_0}(\lambda)\|$ is the operator norm of $R_{q_0}(\lambda)$ in $\mathscr{B}(L^2(\mathrm{M}))$ and $\kappa_1>0$ is a an arbitrary fixed constant.

A usual perturbation argument shows that
\[
\lambda\in \bigcap_{q\in \mathscr{Q}_\lambda} \rho(A_q).
\]
Let
\[
\sigma_\lambda := \bigcup_{q\in \mathscr{Q}_\lambda}\sigma(A_q),
\]
define
\[
\mathbf{e}_\lambda=\max\left(\frac{1}{\mathrm{dist}(\lambda ,\sigma_\lambda)},1\right),
\]
and set $\kappa = \kappa_0+\kappa_1$.

In the rest of this text, $\Gamma $ and $\Sigma$ will denote two nonempty open subsets of $\partial \mathrm{M}$, and  $\mathrm{M}_0, \mathrm{M}'_0\subset \mathrm{Int}(\mathrm{\mathrm{M}})$ will denote two Riemannian submanifolds with $C^{1,1}$ boundary. Further,  assume that $\mathrm{M}_1=\mathrm{M}\setminus\mathrm{Int}(\mathrm{M}_0)$ is connected and $\mathrm{M}_0\subset \mathrm{Int}(\mathrm{M}'_0)$.

\subsection{Dirichlet-to-Neumann maps}

For every $s>0$, consider the closed subspace of $H^s(\partial \mathrm{M})$ given by
\[
H_\Gamma^s(\partial \mathrm{M})=\{ \varphi\in H^s(\partial \mathrm{M});\; \mathrm{supp}(\varphi)\subset \overline{\Gamma}\}.
\]
In the sequel, we equip $H_\Gamma^s(\partial \mathrm{M})$ with the norm of $H^s(\partial \mathrm{M})$.

For $q\in \mathscr{Q}_\lambda$ and $\varphi\in H^{3/2}(M)$, we denote hereafter by $u_{q,\lambda}(\varphi)\in H^2(\mathrm{M})$ the solution of the BVP
\[
(\Delta +\lambda -q)u=0\; \mathrm{in}\; \mathrm{M},\quad u_{|\partial \mathrm{M}}=\varphi.
\]
Then define  the partial Dirichlet-to-Neumann maps $\Lambda_{q,\lambda}^0$ and $\Lambda_{q,\lambda}^1$ respectively as follows
\begin{align*}
&\Lambda_{q,\lambda}^0: H_\Gamma^{3/2}(\partial \mathrm{M})\rightarrow L^2(\Sigma):\varphi\mapsto \partial_\nu u_{q,\lambda}(\varphi)_{|\Sigma}.
\\
&\Lambda_{q,\lambda}^1: H^{3/2}(\partial \mathrm{M})\rightarrow L^2(\Sigma):\varphi\mapsto \partial_\nu u_{q,\lambda}(\varphi)_{|\Sigma}.
\end{align*}
According to the usual a priori estimate in $H^2(\mathrm{M})$, we have
\[
\Lambda_{q,\lambda}^0\in \mathscr{B}(H_\Gamma^{3/2}(\partial \mathrm{M}),L^2(\Sigma))\quad \mathrm{and}\quad \Lambda_{q,\lambda}^1\in \mathscr{B}(H^{3/2}(\partial \mathrm{M}),L^2(\Sigma)).
\]

\subsection{Main results}

In this subsection, $\mathrm{M}$ is a $C^{1,1}$ bounded domain of $\mathbb{R}^n$ equipped with the round metric.

The first main result in this paper is the following theorem, where
\begin{align*}
&\mathbf{b}_\mu=\sqrt{2\cosh \left(\sqrt{\mu}/2\right)},\quad \mu >0,
\\
&\mathbf{m}_\lambda=\max\left( \lambda^{7/2}\mathbf{e}_\lambda^{3/2},\mathbf{b}_\lambda\right)\lambda^7\mathbf{e}_\lambda^2.
\end{align*}

\begin{theorem}\label{mt}
Let $n=3$. There exist two constants $C>0$ and $0<\Upsilon_0<e^{-1}$, only depending on $\mathrm{M}$, $\mathrm{M}_0$, $\kappa$, $\lambda_0$, $\Gamma$ and  $\Sigma$, such that, for any $q_j\in \mathscr{Q}_\lambda$, $j=1,2$ satisfying $q_1=q_2$ in $\mathrm{M}_1$ and $0<\| \Lambda_{q_1,\lambda}^0-\Lambda_{q_2,\lambda}^0\|< \Upsilon_0$, we have
\[
\|q_1-q_2\|_{H^{-1}(\mathrm{M})}\le C \mathbf{m}_\lambda\left|\ln \ln \| \Lambda_{q_1,\lambda}^0-\Lambda_{q_2,\lambda}^0\|\right|^{-2/(n+2)},
\]
where $\| \Lambda_{q_1,\lambda}^0-\Lambda_{q_2,\lambda}^0\|$ denotes the operator norm of  $\Lambda_{q_1,\lambda}^0-\Lambda_{q_2,\lambda}^0$.
\end{theorem}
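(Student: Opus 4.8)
The plan is to establish the stability estimate by combining three quantitative tools: a Runge approximation argument to pass from boundary data to interior information, a stability estimate for the inverse problem in terms of the difference of solutions, and a quantitative unique continuation result to propagate the smallness. Let me sketch the overall strategy.

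The plan is to reduce the problem to a quantitative Fourier reconstruction of $f := q_1 - q_2$, which is supported in $\overline{\mathrm{M}_0}$, and to control each Fourier coefficient by the operator norm $\epsilon := \| \Lambda_{q_1,\lambda}^0-\Lambda_{q_2,\lambda}^0\|$ through a chain consisting of a bilinear boundary identity, a family of complex geometric optics (CGO) solutions, and a quantitative Runge approximation resting on quantitative unique continuation. First I would record the bilinear identity: if $u_1$ solves $(\Delta+\lambda-q_1)u_1=0$ in $\mathrm{M}$ with $u_1|_{\partial\mathrm{M}}=\varphi_1\in H_\Gamma^{3/2}(\partial\mathrm{M})$ and $u_2$ solves $(\Delta+\lambda-q_2)u_2=0$ with $u_2|_{\partial\mathrm{M}}=\varphi_2\in H_\Gamma^{3/2}(\partial\mathrm{M})$, then Green's formula yields an identity of the schematic form
\[
\int_{\mathrm{M}} f\, u_1\overline{u_2}\, d\mu = \big\langle (\Lambda_{q_1,\lambda}^0-\Lambda_{q_2,\lambda}^0)\varphi_1,\ \overline{u_2}|_{\Sigma}\big\rangle_{L^2(\Sigma)},
\]
whose right-hand side is bounded by $\epsilon\,\|\varphi_1\|_{H^{3/2}}\|u_2\|_{H^2}$. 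Because $f$ vanishes on $\mathrm{M}_1$, the left-hand integral is effectively over $\mathrm{M}_0$ only, so it suffices to understand the products $u_1\overline{u_2}$ on the small region $\mathrm{M}_0$.

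Second, on the slightly larger domain $\mathrm{M}'_0\supset\mathrm{M}_0$ I would construct CGO solutions $v_j=e^{\zeta_j\cdot x}(1+r_j)$ of the respective equations, where $\zeta_j\in\mathbb{C}^3$ satisfy $\zeta_j\cdot\zeta_j=0$, $\zeta_1+\overline{\zeta_2}=i\xi$ and $|\mathrm{Re}\,\zeta_j|=\tau$. Treating $q_j-\lambda$ as the zeroth-order term, the standard remainder estimate gives $\|r_j\|_{L^2(\mathrm{M}'_0)}=O(\lambda\tau^{-1})$, so that $v_1\overline{v_2}=e^{i\xi\cdot x}(1+O(\lambda\tau^{-1}))$; testing the bilinear identity with these solutions would recover $\widehat{f}(\xi)$ up to a CGO remainder of size $\kappa\lambda\tau^{-1}$. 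The obstruction is that $v_1$ is only a local solution on $\mathrm{M}'_0$ and has no boundary trace supported in $\Gamma$, so it cannot be inserted into the partial-data identity directly.

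Third, I would bridge this gap with the quantitative Runge approximation: for a tolerance $\delta>0$ there are global solutions $u_j$ of the $q_j$-equations with data in $H_\Gamma^{3/2}(\partial\mathrm{M})$ such that $\|u_j-v_j\|_{L^2(\mathrm{M}_0)}\le\delta\,\|v_j\|_{H^1(\mathrm{M}'_0)}$ at the cost $\|u_j\|_{H^{3/2}}\le e^{C\delta^{-\mu}}\|v_j\|_{H^1(\mathrm{M}'_0)}$. This is the step carrying the bulk of the $\lambda$-dependence: its proof rests on a quantitative unique continuation estimate propagating smallness across $\mathrm{M}_1$ from $\Sigma$ to the interface $\partial\mathrm{M}_0$, whose constant is of the exponential-in-$\sqrt{\lambda}$ type encoded in $\mathbf{b}_\lambda=\sqrt{2\cosh(\sqrt\lambda/2)}$, combined with the resolvent factors $\mathbf{e}_\lambda$. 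Inserting the $u_j$ into the identity and using that $\|u_j\|_{L^2(\mathrm{M}_0)}\lesssim\|v_j\|_{L^2(\mathrm{M}_0)}\lesssim e^{C\tau}$ on $\mathrm{M}_0$ itself, while only the boundary norms pay the cost $e^{C\delta^{-\mu}}$, I expect a frequency-wise bound of the form
\[
|\widehat{f}(\xi)|\ \lesssim\ \epsilon\, e^{C\delta^{-\mu}}e^{C\tau} + \delta\, e^{C\tau} + \lambda\tau^{-1},\qquad |\xi|\lesssim\tau .
\]

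Finally I would optimize. Balancing the second and third terms forces $\delta$ exponentially small in $\tau$, which makes the cost $e^{C\delta^{-\mu}}$ a double exponential in $\tau$; requiring the first term to stay comparable then caps the admissible parameter at $\tau\sim\ln\ln(1/\epsilon)$, so that $|\widehat f(\xi)|$ is controlled for all $|\xi|$ up to a cutoff $\sigma\sim\ln\ln(1/\epsilon)$. The $H^{-1}$ norm is then split at $|\xi|=\sigma$, the low frequencies being handled by the displayed bound and the high frequencies by the a priori bound $\|f\|_{L^2}\le 2\kappa|\mathrm{M}_0|^{1/2}$; balancing the low-frequency volume factor against the high-frequency tail in dimension $n$ fixes the algebraic exponent $2/(n+2)$, and tracking the accumulated $\lambda$-powers and the $\mathbf{b}_\lambda$, $\mathbf{e}_\lambda$ factors through this optimization produces the prefactor $\mathbf{m}_\lambda$. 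The main obstacle is precisely the quantitative Runge/unique continuation step with $\lambda$-explicit constants: one must prove a Carleman or three-balls estimate across $\mathrm{M}_1$ whose dependence on $\lambda$ is sharp enough to yield $\mathbf{b}_\lambda$, and then carry that dependence faithfully through the double-exponential optimization, since any slack there would corrupt both the $\mathbf{m}_\lambda$ prefactor and the double-logarithmic rate.
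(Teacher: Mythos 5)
Your overall architecture matches the paper's: CGO solutions on $\mathrm{M}_0$, a quantitative Runge approximation to convert them into global solutions with Dirichlet data supported in $\Gamma$, a Fourier splitting of the $H^{-1}$ norm at a cutoff $|\xi|\sim\tau^{2/(n+2)}$, and the double-exponential optimization ($\delta$ exponentially small in $\tau$, hence $\tau\sim\ln\ln(1/\epsilon)$) that produces the $|\ln\ln\|\cdot\||^{-2/(n+2)}$ rate. The differences in the CGO normalization ($\zeta\cdot\zeta=0$ with $q-\lambda$ as a potential, giving a remainder $O(\lambda\tau^{-1})$, versus the paper's $\xi\cdot\xi=\lambda$ giving $O(\tau^{-1})$) are minor and only affect the powers of $\lambda$ in the prefactor.

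There is, however, a genuine gap at the very first step. The identity
\[
\int_{\mathrm{M}} (q_1-q_2)\, u_1\overline{u_2}\, d\mu = \big\langle (\Lambda_{q_1,\lambda}^0-\Lambda_{q_2,\lambda}^0)\varphi_1,\ \overline{u_2}\big\rangle_{L^2(\Sigma)}
\]
is false in the partial-data setting: Green's formula produces a pairing over the \emph{full} boundary (or, after using that $\varphi_2$ is supported in $\overline{\Gamma}$, over $\Gamma$), whereas the flux is only measured on $\Sigma$, and no inclusion between $\Gamma$ and $\Sigma$ is assumed. Consequently the right-hand side cannot be bounded by $\|\Lambda_{q_1,\lambda}^0-\Lambda_{q_2,\lambda}^0\|$ as you claim, and the whole chain collapses at its base. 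The paper repairs this with a separate, second application of quantitative unique continuation: it introduces $\tilde v_2$ solving the $q_1$-equation with the same trace as $v_2$, sets $v=v_2-\tilde v_2=R_{q_1}(\lambda)(\chi(q_1-q_2)v_2)$, observes that $v_{|\partial\mathrm{M}}=0$ and $\partial_\nu v_{|\Sigma}=(\Lambda_{q_1,\lambda}^0-\Lambda_{q_2,\lambda}^0)(v_2{}_{|\partial\mathrm{M}})$, localizes with a cutoff $\psi\in C_0^\infty(\mathrm{M}_0')$ so that Green's formula gives $\int(q_2-q_1)v_1v_2=\int[\Delta,\psi]v\,v_1$ with $\mathrm{supp}\,[\Delta,\psi]v\Subset\mathrm{M}_1$, and then controls $\|v\|_{H^1(U)}$ by propagating the Cauchy data on $\Sigma$ across $\mathrm{M}_1$ (Theorem \ref{theorem1.0}). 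This step is not cosmetic: it is the source of the H\"older exponent $\theta=p/(p+1)$ on the DN-map difference and of part of the $\mathbf{b}_\lambda$, $\mathbf{e}_\lambda$ factors in $\mathbf{m}_\lambda$. Note also that the unique continuation inside the Runge approximation propagates from $\Gamma$ (where the Dirichlet data live), not from $\Sigma$ as you state; the paper needs \emph{both} UCP applications, in different places and with different roles, and your proposal accounts for only one of them.
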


\begin{remark}\label{remark1}
{\rm
It is worth observing that $\mathbf{b}_\mu=O(e^{\sqrt{\mu}/4})$ as $\mu$ goes to $\infty$. Also, from the proof of Theorem \ref{theorem1.0} one can see that $C\mathbf{m}_\lambda$ in Theorem \ref{mt} may be substituted by $C_\delta \mathbf{m}^\delta_\lambda$, where $0<\delta <1/2$ is taken arbitrary, $C_\delta$ is a constant depending on $\delta$ and $\mathbf{m}^\delta_\lambda$ is defined similarly to $\mathbf{m}_\lambda$ in which $\mathbf{b}_\lambda$ is replaced by $\mathbf{b}_\lambda^\delta =\sqrt{\sinh\sqrt{\lambda}/\sinh(\sqrt{\lambda}(1-2\delta))}$. Clearly, $\mathbf{b}^\delta_\lambda$ converges to $1$ as $\delta$ goes to $0$, and one can expect that $C_\delta$ converges to $\infty$ when $\delta$ tends to $0$.
}
\end{remark}

\begin{remark}\label{remark2}
{\rm
The restriction $n=3$ in Theorem \ref{mt} comes from the fact that we use a quantitative uniqueness of continuation result needing H\"older continuity of $H^2$-solutions. Unfortunately, according to Sobolev's embedding theorems, this property holds only if $n=3$. Precisely, $H^2(\mathrm{M})$ is continuously embedded in $C^{0,1/2}(\overline{\mathrm{M}})$ in dimension three.
}
\end{remark}

Without the assumption $q_1=q_2$ in $\mathrm{M}_1$, the stability inequality in Theorem \ref{mt} still an open problem. We point out that in \cite[Theorem 1]{CKS} and \cite[Theorem 1.1]{CDR}  double logarithmic stability inequalities were established for particular $\Gamma$ and $\Sigma$, but without assuming that $q_1=q_2$ in $\mathrm{M}_1$ and $n=3$. These results do not mention the dependence on $\lambda$.

Observe that Theorem \ref{mt} gives a precise dependence of the stability constant in term of the frequency $\omega=\sqrt{\lambda}$. It is worth noticing that even if $\lambda$ is small $\mathbf{e}_\lambda$ may be arbitrarily large when $\lambda$ is sufficiently close to $\sigma_\lambda$. In other words, the stability deteriorate not only for a large frequency but also if the square of the frequency is close to $\sigma_\lambda$.

\begin{remark}\label{remark3}
{\rm
It seems very difficult to substitute the distance to $\sigma_\lambda$ by a more explicit assumption. This problem is in fact related to the problem of establishing a lower bound of the gap between consecutive eigenvalues. To our knowledge there is no result concerning this issue even for $A_0$ (that is, $A_0=A_q$ with $q=0$) and under the assumption the eigenvalues of $A_0$ are simple. All that can be found in the literature is an upper bound of the gap between consecutive eigenvalues. We refer for instance to \cite[Theorem 1.1]{CZY}, where the following estimate was established
\[
\lambda_0^{k+1}-\lambda_0^k\le ck^{1/n},\quad k\ge 1.
\]
Here, the constant $c>0$ only depends on $n$ and $\mathrm{M}$.

Let us explain with a simple example the difficulty of obtaining  a lower bound of the gap between consecutive eigenvalues.
To this end, we suppose that $\mathrm{M} = \prod_{j=1}^n(0,\mu _j\pi )$, where the sequence $(\mu _1,\ldots \mu _n)$ is non-resonant (that is, every \linebreak nontrivial rational linear combination of $\mu _1,\ldots \mu _n$ is different from zero). From \cite[Proposition 5]{PS}, $A_0$ has simple eigenvalues given by
\[
\beta_K=\prod_{j=1}^n\frac{k_j^2}{\mu _j^2},\quad  K=(k_1,\ldots ,k_n)\in \mathbb{N}^n.
\]
Here $\mathbb{N}$ is the set of positive integers.

Although the eigenvalues of $A_0$ are giving explicitly, it is not an easy task to know how to deduce $(\lambda_0^k)$ from $(\beta_K)$. Therefore, even in this simple case we can not derive easily a lower bound of the gap between consecutive eigenvalues.
}
\end{remark}

We make a final remark.

\begin{remark}\label{remark4}
{\rm
 We observe that the mapping
\[
q\in \mathscr{Q}_\lambda \mapsto \Lambda_{q,\lambda}^0\in \mathscr{B}(H_\Gamma^{3/2}(\partial \mathrm{M}),L^2(\Sigma))
\]
 is Lipschitz continuous. Precisely, proceeding similarly to the proof of Lemma \ref{lemma1} we demonstrate the following inequality
\[
\| \Lambda_{q_1,\lambda}^0-\Lambda_{q_2,\lambda}^0\| \le C\lambda^2\mathbf{e}_\lambda^2\|q_1-q_2\|_{L^\infty(\mathrm{M})},\quad q_1,q_2\in \mathscr{Q}_\lambda,
\]
where the constant $C>0$ only depends on $n$, $\mathrm{M}$, $\kappa$ and $\lambda_0$.

This inequality shows that $\| \Lambda_{q_1,\lambda}^0-\Lambda_{q_2,\lambda}^0\|$ may be large for large $\lambda$ even if $\|q_1-q_2\|_{L^\infty(\mathrm{M})}$ is small.
}
\end{remark}

Some ideas we used to prove Theorem \ref{mt} were borrowed from \cite{GRZ,KU,RS}. We remark that the stability results in these references are stated in a different form from that appearing in Theorem \ref{mt}. Precisely, the modulus on continuity in Theorem \ref{mt} is given by $C\mathbf{m}_\lambda |\ln\ln t|^{-2(n+2)}$, $0<t<e^{-1}$, where $C$ is a constant depending on $\mathrm{M}$, $\mathrm{M}_0$, $\kappa$, $\lambda_0$, $\Gamma$ and  $\Sigma$. While the modulus of continuity in \cite{GRZ,KU,RS} is a function depending on $\mathrm{M}$, $\mathrm{M}_0$, $\kappa$, $\lambda_0$, $\Gamma$ and  $\Sigma$, and also on  $\lambda$.

Without any restriction on the dimension, we have the following result, where
\[
\tilde{\mathbf{m}}_\lambda=\lambda^2 \mathbf{b}_\lambda \mathbf{e}_\lambda.
\]

\begin{theorem}\label{mt1}
There exist two constants $C>0$ and $0<\Upsilon_1<1$, only depending on $n$, $\mathrm{M}$, $\mathrm{M}_0$, $\kappa$,  $\lambda_0$ and  $\Sigma$, such that, for any $q_j\in \mathscr{Q}_\lambda$, $j=1,2$, satisfying $q_1=q_2$ in $\mathrm{M}_1$ and $0<\| \Lambda_{q_1,\lambda}^1-\Lambda_{q_2,\lambda}^1\|< \Upsilon_1$, we have
\[
\|q_1-q_2\|_{H^{-1}(\mathrm{M})}\le C\tilde{\mathbf{m}}_\lambda \left|\ln \| \Lambda_{q_1,\lambda}^1-\Lambda_{q_2,\lambda}^1\|\right|^{-2/(n+2)}.
\]
\end{theorem}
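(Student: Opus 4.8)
The plan is to combine an Alessandrini-type orthogonality identity with complex geometric optics (CGO) solutions and a \emph{single} quantitative Runge approximation, and then to recover $q:=q_1-q_2$ through a Fourier transform estimate. Since $q_1=q_2$ in $\mathrm{M}_1$, the function $q$ is supported in $\overline{\mathrm{M}_0}$. I first record the identity: for $\varphi\in H^{3/2}(\partial\mathrm{M})$ with $w_2:=u_{q_2,\lambda}(\varphi)$, and for a solution $z:=u_{q_1,\lambda}(\psi)$ whose Dirichlet data $\psi$ is supported in $\overline{\Sigma}$, the function $u:=u_{q_1,\lambda}(\varphi)-w_2$ solves $(\Delta+\lambda-q_1)u=q\,w_2$ with $u_{|\partial\mathrm{M}}=0$ and $\partial_\nu u_{|\Sigma}=(\Lambda_{q_1,\lambda}^1-\Lambda_{q_2,\lambda}^1)\varphi$. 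Two integrations by parts, together with the fact that $\psi$ lives on $\overline{\Sigma}$, then give
\[
\left|\int_{\mathrm{M}}q\,w_2\,z\,d\mu\right|=\left|\int_\Sigma\psi\,(\Lambda_{q_1,\lambda}^1-\Lambda_{q_2,\lambda}^1)\varphi\,ds\right|\le\|\Lambda_{q_1,\lambda}^1-\Lambda_{q_2,\lambda}^1\|\,\|\varphi\|_{H^{3/2}(\partial\mathrm{M})}\,\|\psi\|_{L^2(\Sigma)}.
\]
Only the measured Neumann trace on $\Sigma$ enters, which is precisely what the partial map $\Lambda^1$ provides.

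Next I would build CGO solutions $v_j=e^{\zeta_j\cdot x}(1+\omega_j)$ of $(\Delta+\lambda-q_j)v_j=0$, with $\zeta_j\in\mathbb{C}^n$ chosen so that $\zeta_j\cdot\zeta_j=-\lambda$ and $\zeta_1+\zeta_2=i\xi$ for a prescribed frequency $\xi$, and with the Sylvester--Uhlmann remainder bound $\|\omega_j\|_{L^2}\lesssim\|q_j\|_\infty/|\zeta_j|$. The decisive structural point, and the reason this argument works in every dimension and produces only a single logarithm, is that $\Lambda^1$ accepts arbitrary Dirichlet data: I may take $w_2=v_2$ directly, with $\varphi=v_2{}_{|\partial\mathrm{M}}$, so no approximation of the $q_2$-factor is needed. (In Theorem \ref{mt}, where $\Lambda^0$ forces the Dirichlet data to be supported in $\overline{\Gamma}$, both factors must be approximated, which doubles the logarithm and, through the Hölder continuity of $H^2$ solutions required by the partial-data continuation, restricts the argument to $n=3$.) For the $q_1$-factor, whose data must be supported in $\overline{\Sigma}$, I would invoke a quantitative Runge approximation: given $\eta\in(0,1)$ there is $z$ with $\psi=z_{|\partial\mathrm{M}}$ supported in $\overline{\Sigma}$, $\|z-v_1\|_{L^2(\mathrm{M}_0)}\le\eta$, and $\|\psi\|_{L^2(\Sigma)}\le e^{C\eta^{-\mu}}\|v_1\|_{H^1(\mathrm{M}'_0)}$ for some $\mu>0$, the exponential cost being the quantitative unique continuation from $\Sigma$ across the connected set $\mathrm{M}_1$; the frequency dependence of the continuation constant for $\Delta+\lambda$ is what ultimately produces the factor $\mathbf{b}_\lambda$. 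This is the only instability in the scheme, hence the single logarithm.

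Inserting $w_2z=v_1v_2+v_2(z-v_1)$ and $v_1v_2=e^{i\xi\cdot x}(1+\omega_1+\omega_2+\omega_1\omega_2)$ into the identity, and using the a priori estimates of Lemma \ref{lemma1} (which supply the factor $\lambda^2\mathbf{e}_\lambda$ through $\|\varphi\|_{H^{3/2}}$, $\|v_1\|_{H^1}$ and the resolvent bound tied to $\mathrm{dist}(\lambda,\sigma_\lambda)$), I would obtain for $|\xi|\le R$ a bound of the form
\[
|\widehat{q}(\xi)|\lesssim\lambda^2\mathbf{b}_\lambda\mathbf{e}_\lambda\Big(e^{C\eta^{-\mu}}\|\Lambda_{q_1,\lambda}^1-\Lambda_{q_2,\lambda}^1\|+\eta\Big)+\frac{\kappa}{|\zeta_1|}.
\]
To finish I would estimate $\|q\|_{H^{-1}(\mathrm{M})}$ by splitting at frequency $R$: the high frequencies are controlled by $R^{-1}\|q\|_{L^2}\lesssim R^{-1}\kappa$, while the low frequencies are controlled crudely by $\big(\int_{|\xi|\le R}|\widehat q|^2\big)^{1/2}\lesssim R^{n/2}\sup_{|\xi|\le R}|\widehat q|$. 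Choosing the CGO scale $|\zeta_1|\gtrsim\eta^{-1}$ so that the remainder $\kappa/|\zeta_1|$ is dominated by $\eta$ while the attendant exponential growth of the CGO norms is absorbed into the single-exponential factor $e^{C\eta^{-\mu}}$, optimizing in $\eta$ (balancing $e^{C\eta^{-\mu}}t$ against $\eta$, with $t:=\|\Lambda_{q_1,\lambda}^1-\Lambda_{q_2,\lambda}^1\|$, which turns the DtN smallness into $|\ln t|^{-1}$), and finally optimizing in $R$ (balancing $R^{n/2}\sup|\widehat q|$ against $R^{-1}\kappa$) leads to the exponent $2/(n+2)$ and the stated modulus $C\tilde{\mathbf{m}}_\lambda|\ln t|^{-2/(n+2)}$.

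I expect the main obstacle to be the quantitative Runge approximation with explicit control of its dependence on $\lambda$. One must track how the unique continuation constant for $\Delta+\lambda$ degrades in the high-frequency regime --- this is the origin of $\mathbf{b}_\lambda=\sqrt{2\cosh(\sqrt\lambda/2)}$ --- and combine it with the CGO growth and the resolvent factor $\mathbf{e}_\lambda$ so that the several balancing steps collapse to the clean prefactor $\tilde{\mathbf{m}}_\lambda=\lambda^2\mathbf{b}_\lambda\mathbf{e}_\lambda$ rather than to a constant with uncontrolled frequency growth. Keeping these dependencies explicit, while simultaneously driving the CGO remainder and the Runge error to zero at the single-logarithmic rate, is the delicate part of the argument; the appearance of $\Sigma$ but not $\Gamma$ among the constants reflects that, with full Dirichlet data, only the continuation from the measurement set $\Sigma$ matters.
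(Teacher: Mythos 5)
Your overall architecture (orthogonality identity, CGO solutions, frequency splitting, final optimization) matches the paper's, and your observation that the $q_2$-factor needs no approximation because $\Lambda^1_{q,\lambda}$ accepts arbitrary Dirichlet data is exactly right. The gap is in how you treat the $q_1$-factor. You require the test solution $z$ to have Dirichlet data $\psi$ supported in $\overline{\Sigma}$ and purchase this with a quantitative Runge approximation of cost $e^{C\eta^{-\mu}}$. But the resulting error term $\int_{\mathrm{M}_0}q\,v_2(z-v_1)\,d\mu$ is bounded by $\kappa\,\|v_2\|_{L^2(\mathrm{M}_0)}\|z-v_1\|_{L^2(\mathrm{M}_0)}\lesssim \eta\, e^{\varkappa\tau}$, since $\|v_2\|_{L^2(\mathrm{M}_0)}$ grows like $e^{\varkappa\tau}$ in the CGO parameter $\tau=|\Im\zeta_1|$. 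To keep $|\widehat{q}(\xi)|$ at the level $\tau^{-1}$ you are therefore forced to take $\eta\lesssim \tau^{-1}e^{-\varkappa\tau}$, and then the Runge cost $e^{C\eta^{-\mu}}$ becomes \emph{doubly} exponential in $\tau$. Balancing $\tau^{-2/(n+2)}$ against $e^{e^{c\tau}}t$ with $t=\|\Lambda^1_{q_1,\lambda}-\Lambda^1_{q_2,\lambda}\|$ yields $\tau\sim\ln|\ln t|$, i.e.\ the double-logarithmic modulus of Theorem \ref{mt}, not the single logarithm claimed here. Your remark that the CGO growth is ``absorbed into the single-exponential factor $e^{C\eta^{-\mu}}$'' is where this is hidden: the absorption is multiplicative in the exponent, not additive. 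A secondary casualty is dimension-independence: the quantitative Runge approximation across $\mathrm{M}_1$ (Theorem \ref{theorem2}, via Corollary \ref{corollary1.0}) is only available for $n=3$ in this paper, whereas Theorem \ref{mt1} holds for all $n\ge 3$.

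The paper's proof avoids Runge approximation altogether, which is precisely what makes the single logarithm possible. Both CGOs are used as global solutions on $\mathrm{M}$; one sets $\tilde{u}_2=u_{q_1,\lambda}(u_2{}_{|\partial\mathrm{M}})$ and $u=u_2-\tilde{u}_2$, so that $u_{|\partial\mathrm{M}}=0$, $\partial_\nu u_{|\Sigma}=(\Lambda^1_{q_1,\lambda}-\Lambda^1_{q_2,\lambda})(u_2{}_{|\partial\mathrm{M}})$, and $(\Delta+\lambda-q_1)u=0$ in $\mathrm{M}_1$ because $q_1=q_2$ there. A cutoff equal to $1$ near $\mathrm{M}_0$ reduces $\int_{\mathrm{M}}(q_2-q_1)u_2u_1\,dx$ to $\|u\|_{H^1(U)}\|u_1\|_{L^2(\mathrm{M})}$ with $U\Subset\mathrm{Int}(\mathrm{M}_1)$, and Theorem \ref{theorem1.0} (quantitative unique continuation from Cauchy data on $\Sigma$, applied on $\mathrm{M}_1$) controls $\|u\|_{H^1(U)}$ with \emph{H\"older} stability: optimizing $\rho^pA+\rho^{-1}B$ puts the Dirichlet-to-Neumann difference in with a fixed power $\theta$, so that only the single exponential $e^{\varkappa\tau}$ from the CGO norms survives into the final balance $\tau^{-2/(n+2)}=e^{\varkappa\tau}t^{\theta}$. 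That is what produces $|\ln t|^{-2/(n+2)}$ and the clean prefactor $\tilde{\mathbf{m}}_\lambda=\lambda^2\mathbf{b}_\lambda\mathbf{e}_\lambda$. To repair your argument, replace the Runge step by this localization-plus-unique-continuation device applied to $u_2-\tilde{u}_2$.
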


The rest of this text is organized as follows. In Section \ref{section2} we give a quantification of the unique continuation from Cauchy data adapted to our context. A quantitative Range approximation result is proved in Section \ref{section3}. This result is essential in the proof of Theorem \ref{mt}. Section \ref{section4} is devoted to establishing a preliminary inequality which can be considered as an intermediate result that we need to prove Theorem \ref{mt}. In Section \ref{section5} we show that the preliminary inequality of Section \ref{section4} can be used to obtain a uniqueness result for the inverse problem of determining the potential or the conformal factor from partial boundary data. Theorems \ref{mt} and \ref{mt1} are proved in Section \ref{section6} with the aid of complex geometric optic solutions. Finally, in Section \ref{section7} we modify the analysis we carried out in the case of the Dirichlet boundary condition to extend Theorems \ref{mt} and \ref{mt1} to the case of an impedance boundary condition.

\section{Quantitative unique continuation from Cauchy data}\label{section2}

The following lemma will be useful in the sequel.

\begin{lemma}\label{lemma1}
Pick $q\in \mathscr{Q}_\lambda$.

\noindent
$(i)$ For $f\in L^2(\mathrm{M})$, we have
\begin{equation}\label{1}
\|R_q(\lambda)f\|_{H^j(\mathrm{M})}\le c_1\lambda^{j/2}\mathbf{e}_\lambda\|f\|_{L^2(\mathrm{M})},\quad j=0,1,2.
\end{equation}

\noindent
$(ii)$ For $\varphi\in H^{3/2}(\partial \mathrm{M})$, we have
\begin{equation}\label{1.0}
\|u_{q,\lambda}(\varphi)\|_{H^2(\mathrm{M})}\le c_2\lambda^2\mathbf{e}_\lambda\|\varphi\|_{H^{3/2}(\partial \mathrm{M})}.
\end{equation}

\noindent
Here, the constants $c_1>0$ and $c_2>0$ only depend on $n$, $\mathrm{M}$, $\kappa$ and $\lambda_0$.
\end{lemma}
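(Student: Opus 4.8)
The plan is to treat the three values of $j$ in $(i)$ one order of regularity at a time, and then to deduce $(ii)$ by reducing the inhomogeneous boundary value problem to a resolvent estimate through a bounded lifting of the boundary data. The whole argument rests on one structural observation: since $q$ is real-valued, $A_q$ is self-adjoint, so its resolvent norm is \emph{exactly} the reciprocal of the distance from $\lambda$ to the spectrum. Because $\sigma(A_q)\subset \sigma_\lambda$ for every $q\in\mathscr{Q}_\lambda$, we have $\mathrm{dist}(\lambda,\sigma(A_q))\ge \mathrm{dist}(\lambda,\sigma_\lambda)$, and hence the \emph{uniform} bound $\|R_q(\lambda)\|_{\mathscr{B}(L^2(\mathrm{M}))}\le \mathbf{e}_\lambda$. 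This settles $(i)$ for $j=0$ (taking $c_1\ge1$).

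For $j=1$ I would set $u=R_q(\lambda)f$, so that $(A_q-\lambda)u=f$ with $u\in D(A)$, and test against $u$ using the sesquilinear form: $\mathfrak{a}_q(u,u)-\lambda\|u\|^2=(f|u)$. Isolating the gradient term gives $\|\nabla u\|_g^2=\mathrm{Re}(f|u)+\lambda\|u\|^2-\int_{\mathrm{M}}q|u|^2d\mu$, whence $\|\nabla u\|_g^2\le \|f\|\,\|u\|+(\lambda+\kappa)\|u\|^2$ after bounding $\|q\|_\infty\le\kappa$. Inserting the $j=0$ estimate $\|u\|\le\mathbf{e}_\lambda\|f\|$ and using $\lambda\ge\lambda_0$ and $\mathbf{e}_\lambda\ge1$ to absorb the lower-order terms yields $\|\nabla u\|_g\le c\lambda^{1/2}\mathbf{e}_\lambda\|f\|$; combined with the $L^2$ bound this gives the claim for $j=1$. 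For $j=2$ I would instead read the equation as $-\Delta u=f+\lambda u-qu$ with $u\in H^2(\mathrm{M})\cap H_0^1(\mathrm{M})$ and invoke the $H^2$ a priori estimate $\|u\|_{H^2}\le C(\|\Delta u\|_{L^2}+\|u\|_{L^2})$ for the Dirichlet problem, valid precisely because $\mathrm{M}$ is $C^{1,1}$ with $C^{2,1}$ metric. Since $\|\Delta u\|_{L^2}\le\|f\|+(\lambda+\kappa)\|u\|\le c\lambda\mathbf{e}_\lambda\|f\|$ by the $j=0$ bound, the claim follows.

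For $(ii)$ I would pick a bounded right inverse of the trace map, that is an extension $\Phi\in H^2(\mathrm{M})$ of $\varphi$ with $\Phi_{|\partial\mathrm{M}}=\varphi$ and $\|\Phi\|_{H^2}\le C\|\varphi\|_{H^{3/2}(\partial\mathrm{M})}$. Setting $w=u_{q,\lambda}(\varphi)-\Phi$, the boundary values cancel so $w\in D(A)$, and from $(\Delta+\lambda-q)u_{q,\lambda}(\varphi)=0$ one obtains $(A_q-\lambda)w=-(A_q-\lambda)\Phi=:g$, with $\|g\|_{L^2}\le\|\Delta\Phi\|+(\lambda+\kappa)\|\Phi\|\le c\lambda\|\varphi\|_{H^{3/2}(\partial\mathrm{M})}$. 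Applying the $j=2$ estimate of $(i)$ to $w=R_q(\lambda)g$ gives $\|w\|_{H^2}\le c_1\lambda\mathbf{e}_\lambda\|g\|_{L^2}\le c\lambda^2\mathbf{e}_\lambda\|\varphi\|_{H^{3/2}(\partial\mathrm{M})}$, while $\|\Phi\|_{H^2}\le C\|\varphi\|_{H^{3/2}(\partial\mathrm{M})}$ is dominated by the same quantity since $\lambda^2\mathbf{e}_\lambda\ge\lambda_0^2$. The triangle inequality $\|u_{q,\lambda}(\varphi)\|_{H^2}\le\|w\|_{H^2}+\|\Phi\|_{H^2}$ then yields $(ii)$.

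The routine parts are the energy estimate and the extension bound; the only point demanding care is the bookkeeping of the powers of $\lambda$ and the factors of $\mathbf{e}_\lambda$, so that the final exponents emerge as $\lambda^{j/2}\mathbf{e}_\lambda$ in $(i)$ and $\lambda^2\mathbf{e}_\lambda$ in $(ii)$. The genuinely essential ingredient — where one must not be careless — is the self-adjointness argument producing the resolvent norm uniformly over $\mathscr{Q}_\lambda$: this is what converts the spectral-gap quantity $\mathbf{e}_\lambda$ into an operator bound and explains why $\mathscr{Q}_\lambda$ was defined so that $\lambda\in\rho(A_q)$ for every admissible $q$. One should also confirm that the second-order a priori estimate genuinely holds on a $C^{1,1}$ manifold, this being the borderline regularity for $H^2$ elliptic estimates.
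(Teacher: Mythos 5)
Your proposal is correct and follows essentially the same route as the paper: the uniform resolvent bound $\|R_q(\lambda)\|\le\mathbf{e}_\lambda$ (which the paper obtains via the eigenfunction expansion rather than the abstract self-adjoint resolvent-norm formula, an equivalent argument), the energy identity for $j=1$, the elliptic $H^2$ a priori estimate for $j=2$, and the lifting $u_{q,\lambda}(\varphi)=\Phi-R_q(\lambda)((\Delta+\lambda-q)\Phi)$ for part $(ii)$. The bookkeeping of the powers of $\lambda$ and $\mathbf{e}_\lambda$ checks out, so there is nothing to add.
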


\begin{proof}
(i) Let $f\in L^2(\mathrm{M})$  and $u=R_q(\lambda)f$. Then the usual a priori estimate in $H^2(\mathrm{M})$ yields
\begin{equation}\label{2}
\|u\|_{H^2(\mathrm{M})}\le c_0(\kappa +\lambda)\|u\|_{L^2(\mathrm{M})}\le c_0(\kappa/\lambda_0 +1)\lambda \|u\|_{L^2(\mathrm{M})},
\end{equation}
where $c_0>0$ is a constant only depending on $n$ and $\mathrm{M}$.

On the other hand, we have
\[
u=\sum_{k\ge 1}\frac{(f|\phi_q^k)}{\lambda_q^k-\lambda}\phi_q^k,
\]
and hence
\[
\|u\|_{L^2(\mathrm{M})}^2=\sum_{k\ge 1} \frac{|(f|\phi_q^k)|^2}{|\lambda_q^k-\lambda|^2}.
\]
In consequence, the following inequality holds
\begin{equation}\label{3}
\|u\|_{L^2(M)}\le \mathbf{e}_\lambda\|f\|_{L^2(\mathrm{M})}.
\end{equation}
Then \eqref{3} in \eqref{2} gives \eqref{1} for $j=2$. The case $j=1$ can be easily deduced from \eqref{3} and the following inequality
\[
\int_{\mathrm{M}}|\nabla u|^2d\mu=\int_{\mathrm{M}}(\lambda-q)u^2d\mu +\int_{\mathrm{M}}fud\mu.
\]

(ii) Let $\Phi\in H^2(\mathrm{M})$ so that $\Phi_{|\partial \mathrm{M}}=\varphi$ and $\|\Phi\|_{H^2(\mathrm{M})}=\|\varphi\|_{H^{3/2}(\partial \mathrm{M})}$. Then it is not hard to check that
\[
u_{q,\lambda}(\varphi)=\Phi-R_q(\lambda)((\Delta +\lambda -q)\Phi).
\]
Then \eqref{1.0} follows from \eqref{1}.
\end{proof}

 Define
\[
H_{0,\Sigma}^1(\mathrm{M})=\{ u\in H^1(\mathrm{M});\; u_{|\Sigma}=0\}.
\]

\begin{theorem}\label{theorem1.0}
Let $\mu > 0$ and $U\Subset \mathrm{Int}(\mathrm{M})$. For any $\epsilon >0$, $u\in H^2(\mathrm{M})\cap H_{0,\Sigma}^1(\mathrm{M})$ and $q\in \mathscr{Q}_\lambda$, we have
\begin{align*}
&C\|u\|_{H^1(U)}\le \epsilon ^p(1+\sqrt{\mu})\mathbf{b}_\mu\|u\|_{H^1(\mathrm{M})}
\\
&\hskip 2.0cm+\epsilon^{-1} \left( \mathbf{b}_\mu\|(\Delta -q+\mu )u\|_{L^2(\mathrm{M})}+\|\partial _\nu u\|_{L^2(\Sigma )}\right),
\end{align*}
where the constant $C>0$ and $p>0$ only depend on $n$, $\mathrm{M}$, $\kappa$, $\Sigma$ and $U$.
\end{theorem}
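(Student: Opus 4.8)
The plan is to remove the spectral parameter $\mu$ by lifting the problem one dimension, thereby reducing the statement to a parameter-free quantitative unique continuation inequality from Cauchy data, a device used in \cite{GRZ,KU,RS}. On the cylinder $\mathrm M\times I$ with $I=(-1/4,1/4)$, set
\[
v(x,t)=u(x)\cosh\!\big(\sqrt{\mu}\,t\big).
\]
Because $\partial_t^2\cosh(\sqrt\mu\,t)=\mu\cosh(\sqrt\mu\,t)$, writing $\Delta_{x,t}=\Delta+\partial_t^2$ for the Laplacian of the product one finds
\[
(\Delta_{x,t}-q)v=\big((\Delta-q+\mu)u\big)\cosh(\sqrt\mu\,t),
\]
so $v$ solves a second order elliptic equation with bounded potential $q$ (recall $\|q\|_\infty\le\kappa$ on $\mathscr Q_\lambda$) and \emph{no} spectral parameter. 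The Cauchy conditions transfer cleanly: since $u_{|\Sigma}=0$ we get $v_{|\Sigma\times I}=0$ and $\partial_\nu v=(\partial_\nu u)\cosh(\sqrt\mu\,t)$ on $\Sigma\times I$, while the central slice recovers the unknown, $v(\cdot,0)=u$.

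I would then invoke, or reprove by a Carleman estimate, an interior quantitative unique continuation inequality from Cauchy data for the parameter-free operator $\Delta_{x,t}-q$ on $\mathrm M\times I$: for $U\Subset\mathrm{Int}(\mathrm M)$ and a fixed $I'\Subset I$ there exist $C,p>0$, depending only on $n,\mathrm M,\kappa,\Sigma,U$, such that for every $\epsilon>0$
\[
\|v\|_{H^1(U\times I')}\le \epsilon^{p}\|v\|_{H^1(\mathrm M\times I)}+\epsilon^{-1}\Big(\|(\Delta_{x,t}-q)v\|_{L^2(\mathrm M\times I)}+\|\partial_\nu v\|_{L^2(\Sigma\times I)}\Big).
\]
This $\epsilon$-form is the Young-inequality reformulation of a three-region (Hadamard type) interpolation inequality $\|v\|_{H^1(U\times I')}\le CA^{1-\theta}B^{\theta}$, with $A$ the a priori $H^1$ bound, $B$ the data, and $p=\theta/(1-\theta)$; connectedness of $\mathrm M$ is what lets smallness on $\Sigma$ propagate to an arbitrary interior set $U$.

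To pass back to $u$ I use that $v$ is the explicit product $u(x)\cosh(\sqrt\mu t)$. On $I$ one has $\cosh(\sqrt\mu t)\le\mathbf b_\mu/\sqrt2$, because $\mathbf b_\mu^2=2\cosh(\sqrt\mu/2)\ge2\cosh^2(\sqrt\mu t)$ for $|t|\le1/4$; and $\cosh\ge1$ gives, after integrating in $t$ over $I'$, the lower bound $|I'|\,\|u\|_{H^1(U)}^2\le\|v\|_{H^1(U\times I')}^2$, so no trace or elliptic regularity step is needed on the left. Substituting, the volume residual obeys $\|(\Delta_{x,t}-q)v\|_{L^2(\mathrm M\times I)}\le\mathbf b_\mu\|(\Delta-q+\mu)u\|_{L^2(\mathrm M)}$, while the a priori term $\|v\|_{H^1(\mathrm M\times I)}$ produces $(1+\sqrt\mu)\mathbf b_\mu\|u\|_{H^1(\mathrm M)}$, the additional $\sqrt\mu$ coming from $\partial_t v=\sqrt\mu\,u\,\sinh(\sqrt\mu t)$; the Cauchy term is carried back to $\|\partial_\nu u\|_{L^2(\Sigma)}$ by the sharp weighting discussed below. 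Collecting these terms and renaming constants yields the asserted inequality; the exact form $\mathbf b_\mu=\sqrt{2\cosh(\sqrt\mu/2)}$ is the $\delta=1/4$ value of the hyperbolic-sine ratio of Remark \ref{remark1}, which indicates that the gain is really governed by the interpolation weight of the underlying three-region estimate.

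The decisive step, and the main obstacle, is the parameter-free Cauchy-data unique continuation inequality together with the \emph{sharp} bookkeeping of the $\mu$-dependence. Establishing the three-region estimate requires a Carleman weight adapted to the geometry of the passage from $\Sigma$ to $U$, and all the gain over a crude pointwise bound of the $t$-factor rests on matching its growth across the nested regions to the interpolation exponent. The most delicate point is the Cauchy term: naively substituting $v=u\cosh(\sqrt\mu t)$ would attach a factor $\mathbf b_\mu$ to $\|\partial_\nu u\|_{L^2(\Sigma)}$, and obtaining it \emph{without} that factor, as in the statement, requires that the Carleman weight be $O(1)$ on the Cauchy region $\Sigma$ while being of size $\mathbf b_\mu$ on the target. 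Verifying simultaneously that $C$ and $p$ depend only on $n,\mathrm M,\kappa,\Sigma,U$ and not on $\mu$ is the crux of the argument.
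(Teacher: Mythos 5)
Your strategy is the same as the paper's: lift $u$ to a cylinder by multiplying with a hyperbolic weight in an auxiliary variable $t$, so that the spectral parameter is absorbed into $\partial_t^2$, apply a parameter-free quantitative unique continuation estimate from Cauchy data on the cylinder, and then divide out the weight. The paper takes $v=e^{\sqrt{\mu}\,t}u$ on $\mathrm{M}\times(0,1)$ and imports the cylinder estimate from \cite{BC} (Propositions A.2 and A.3) rather than reproving it by a Carleman estimate; your $\cosh(\sqrt{\mu}\,t)$ on $(-1/4,1/4)$ is an equivalent device, and your identification of $\mathbf{b}_\mu$ with the $\delta=1/4$ hyperbolic-sine ratio is the right numerology.

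There is, however, a genuine gap exactly at the point you yourself single out as the crux, namely the Cauchy term. In the cylinder inequality as you state it, the Cauchy data is measured on $\Sigma\times I$ (the full $t$-interval) while the target is $U\times I'$ with $I'\Subset I$, and you lower-bound the target using only $\cosh\ge 1$. With these choices the substitution $v=u\cosh(\sqrt{\mu}\,t)$ produces $\|\partial_\nu u\|_{L^2(\Sigma)}\,\|\cosh(\sqrt{\mu}\,\cdot)\|_{L^2(I)}$ on the right against $|I'|^{1/2}\|u\|_{H^1(U)}$ on the left, and the quotient $\|\cosh(\sqrt{\mu}\,\cdot)\|_{L^2(I)}/|I'|^{1/2}$ grows like $\mathbf{b}_\mu$; so you obtain $\mathbf{b}_\mu\|\partial_\nu u\|_{L^2(\Sigma)}$, not $\|\partial_\nu u\|_{L^2(\Sigma)}$. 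Your proposed remedy --- a Carleman weight that is $O(1)$ on $\Sigma$ but of size $\mathbf{b}_\mu$ on the target --- is unsubstantiated and is not what is needed. The correct (and much simpler) fix, which is how the paper's version of the cylinder estimate is arranged, is to take the Cauchy data on the \emph{same} $t$-window as the target (in the paper: $\Sigma\times(1/4,3/4)$ against $U\times(1/4,3/4)$, inside the full cylinder $\mathrm{M}\times(0,1)$), and then to divide both sides by the weighted norm $\bigl(\int_{I'}\cosh^2(\sqrt{\mu}\,t)\,dt\bigr)^{1/2}$ of the $t$-factor rather than by $|I'|^{1/2}$: the weight on the Cauchy term then cancels identically, and only the two full-cylinder terms inherit the ratio $\bigl(\int_{I}\cosh^2/\int_{I'}\cosh^2\bigr)^{1/2}$, which for the paper's choice of windows equals $\bigl(\sinh\sqrt{\mu}/\sinh(\sqrt{\mu}/2)\bigr)^{1/2}=\mathbf{b}_\mu$ exactly. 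With that adjustment your argument goes through and reproduces the stated constants and their dependencies.
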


\begin{proof}
Let $v\in H^2(\mathrm{M}\times (0,1))$ satisfying $v_{|\Sigma\times (0,1)}=0$. We readily obtain from \cite[Propositions A.2 and A.3]{BC}
\begin{align*}
&C\|v\|_{H^1(U\times (1/4,3/4))}\le \epsilon ^p\|u\|_{H^1(\mathrm{M}\times (0,1))}
\\
&\hskip 2cm+\epsilon^{-1} \left( \|(\Delta +\partial_t^2-q )u\|_{L^2(\mathrm{M}\times (0,1))}+\|\partial _\nu v\|_{L^2(\Sigma \times (1/4,3/4))}\right),
\end{align*}
where the constants $C>0$ and $p>0$ only depend on $n$, $\mathrm{M}$, $\kappa$, $\Sigma$ and $U$.

This inequality applied to $v=e^{\sqrt{\mu}t}u$ with  $u\in H^2(\mathrm{M})\cap H_{0,\Sigma}^1(\mathrm{M})$ yields, after some technical elementary calculations,
\begin{align*}
&C\|u\|_{H^1(U)}\le \epsilon ^p(1+\sqrt{\mu})\mathbf{b}_\mu\|u\|_{H^1(\mathrm{M})}
\\
&\hskip 2cm+\epsilon^{-1} \left( \mathbf{b}_\mu\|(\Delta -q+\mu )u\|_{L^2(\mathrm{M})}+\|\partial _\nu u\|_{L^2(\Sigma )}\right).
\end{align*}
This is the expected inequality.
\end{proof}

We shall also need in the sequel the following quantitative unique continuation from boundary data.

\begin{theorem}\label{theoremB1}
Assume that $n=3$ and let $q\in \mathscr{Q}_\lambda$. For every $0<\epsilon <1$ and $u\in H_{0,\Sigma}^1(\mathrm{M} )\cap H^2(\mathrm{M})$, we have
\begin{align}
&\|u\|_{H^1(\mathrm{M})}\le C\lambda^{5/2} \mathbf{e}_\lambda \Big(\epsilon^\beta \|u\|_{H^2(\mathrm{M})}\label{B1}
\\
&\hskip 2cm +e^{c/\epsilon} \left( \|(\Delta -q+\lambda )u\|_{L^2(\mathrm{M})}+\|\partial _\nu u\|_{L^2(\Sigma )}\right)\Big),\nonumber
\end{align}
where the constant $C>0$ only depends on $\mathrm{M}$, $\kappa$, $\lambda_0$ and $\Sigma$, while the constants $c>0$ and $\beta >0$ only depend of $\mathrm{M}$, $\kappa$ and $\Sigma$.
\end{theorem}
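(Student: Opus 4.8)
The goal of Theorem~\ref{theoremB1} is to upgrade the interior-to-interior estimate of Theorem~\ref{theorem1.0} into an estimate controlling the full $H^1(\mathrm{M})$ norm by the Cauchy data on $\Sigma$, at the cost of an exponential factor $e^{c/\epsilon}$ in the small parameter. The plan is to apply Theorem~\ref{theorem1.0} with $\mu=\lambda$, so that the interior norm $\|u\|_{H^1(U)}$ over a fixed compactly contained $U\Subset\mathrm{Int}(\mathrm{M})$ is bounded by $\epsilon^p(1+\sqrt{\lambda})\mathbf{b}_\lambda\|u\|_{H^1(\mathrm{M})}$ plus $\epsilon^{-1}$ times the source-plus-Cauchy-data terms. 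The first task is then to propagate this interior control up to the boundary, i.e.\ to bound $\|u\|_{H^1(\mathrm{M})}$ (not merely $\|u\|_{H^1(U)}$) by the right-hand side, and the second is to convert the resulting bound, whose small parameter is $\epsilon$ appearing polynomially, into the stated form with an exponential weight $e^{c/\epsilon}$ and a clean H\"older power $\epsilon^\beta$ on the $H^2$ term.

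First I would establish a boundary-layer estimate on the collar $\mathrm{M}\setminus U$. Since $u\in H^2(\mathrm{M})\cap H_{0,\Sigma}^1(\mathrm{M})$, we have partial trace information on $\partial\mathrm{M}$: the Dirichlet trace vanishes on $\Sigma$ and the Neumann trace $\partial_\nu u$ is controlled in $L^2(\Sigma)$ by hypothesis. Using a Carleman-type or three-balls propagation argument near $\partial\mathrm{M}$ one bounds the $H^1$ norm on the collar by the interior norm $\|u\|_{H^1(U)}$, the source $\|(\Delta-q+\lambda)u\|_{L^2(\mathrm{M})}$, and the Cauchy data on $\Sigma$, with constants that may again be exponential in the propagation parameter. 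Here the dimensional restriction $n=3$ enters precisely as explained in Remark~\ref{remark2}: the quantitative unique continuation from the boundary that I would invoke requires H\"older continuity of $H^2$ solutions, which by Sobolev embedding holds only when $H^2(\mathrm{M})\hookrightarrow C^{0,1/2}(\overline{\mathrm{M}})$, that is in dimension three. Combining this collar estimate with Theorem~\ref{theorem1.0} gives a global bound of the schematic form $\|u\|_{H^1(\mathrm{M})}\le C\big(\epsilon^p A\|u\|_{H^1(\mathrm{M})}+\epsilon^{-1}B+(\text{exponential})\,D\big)$, where $A=(1+\sqrt{\lambda})\mathbf{b}_\lambda$ and $B$, $D$ collect source and Cauchy-data terms.

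Next I would optimize in $\epsilon$ and absorb the self-referential term. For $\epsilon$ small enough that $C\epsilon^p A\le 1/2$, the term $C\epsilon^p A\|u\|_{H^1(\mathrm{M})}$ is absorbed into the left-hand side, leaving $\|u\|_{H^1(\mathrm{M})}$ bounded by the source and Cauchy terms with an $\epsilon^{-1}$ or exponential prefactor. The cost of this absorption is a lower threshold on $\epsilon$ depending on $A\sim\sqrt{\lambda}\,\mathbf{b}_\lambda$, which is exactly where the explicit $\lambda$-dependence of the final constant originates; tracking it carefully through the optimization yields a prefactor of the form $C\lambda^{5/2}\mathbf{e}_\lambda$ after also using Lemma~\ref{lemma1} to relate the norms. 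To recover the stated exponential weight I would reparametrize, replacing the polynomial small parameter by $e^{-c/\epsilon}$-type scaling: writing the bound with a free interpolation parameter and trading off the $\epsilon^p\|u\|_{H^2}$ term against the exponentially weighted data term produces the displayed inequality \eqref{B1} with a genuine H\"older exponent $\beta$ on $\|u\|_{H^2(\mathrm{M})}$ and the factor $e^{c/\epsilon}$ on the Cauchy data.

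The main obstacle I anticipate is the propagation of smallness all the way to $\partial\mathrm{M}$ while keeping the $\lambda$-dependence fully explicit. The interior estimate of Theorem~\ref{theorem1.0} is comparatively clean, but extending control from $U$ to the boundary collar requires a boundary Carleman estimate or iterated three-balls inequality, and each iteration typically degrades the constant geometrically; one must count the number of balls or the weight growth so that the aggregate constant remains of the claimed polynomial order $\lambda^{5/2}\mathbf{e}_\lambda$ rather than, say, exponential in $\lambda$. Reconciling the polynomial $\epsilon^p$ structure inherited from Theorem~\ref{theorem1.0} with the exponential $e^{c/\epsilon}$ weight demanded in \eqref{B1} is the delicate bookkeeping step, since it amounts to choosing the interpolation exponent and the threshold for $\epsilon$ so that both the H\"older power $\beta$ and the exponential rate $c$ come out depending only on $\mathrm{M}$, $\kappa$ and $\Sigma$, with all of the $\lambda$-growth isolated in the single prefactor $C\lambda^{5/2}\mathbf{e}_\lambda$.
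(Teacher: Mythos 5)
Your strategy diverges substantially from the paper's, and as written it has two genuine gaps. The paper does not prove Theorem \ref{theoremB1} by combining the interior estimate of Theorem \ref{theorem1.0} with a collar propagation and an absorption argument. Instead it first establishes a bound on the trace of $u$ over the \emph{whole} boundary,
$C\|u\|_{C(\partial\mathrm{M})}\le \lambda\epsilon^\beta\|u\|_{H^2(\mathrm{M})}+e^{c/\epsilon}\left(\|(\Delta-q+\lambda)u\|_{L^2(\mathrm{M})}+\|\partial_\nu u\|_{L^2(\Sigma)}\right)$,
by adapting the boundary quantitative unique continuation result of \cite[Theorem A4]{BC} (this is where $n=3$ and the embedding $H^2(\mathrm{M})\hookrightarrow C^{0,1/2}(\overline{\mathrm{M}})$ enter). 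It then converts this boundary control into a global $H^1$ bound not by unique continuation in a collar but by elliptic theory: setting $f=(-\Delta-\lambda+q)u$, letting $\Phi$ be the harmonic extension of $u_{|\partial\mathrm{M}}$ and writing $u=\Phi+R_q(\lambda)(f+(\lambda-q)\Phi)$, Lemma \ref{lemma1} gives $\|u\|_{H^1(\mathrm{M})}\le C\lambda^{3/2}\mathbf{e}_\lambda(\|u\|_{H^{1/2}(\partial\mathrm{M})}+\|f\|_{L^2(\mathrm{M})})$; an interpolation of $\|u\|_{H^{1/2}(\partial\mathrm{M})}$ between $\|u\|_{L^2(\partial\mathrm{M})}$ and $\|u\|_{H^2(\mathrm{M})}$ and an optimization of the free parameter finish the proof.

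The first gap in your route: by feeding Theorem \ref{theorem1.0} into the argument you necessarily import the factor $\mathbf{b}_\lambda=\sqrt{2\cosh(\sqrt{\lambda}/2)}=O(e^{\sqrt{\lambda}/4})$, which multiplies both the $\|u\|_{H^1(\mathrm{M})}$ term and the source term there; no optimization in $\epsilon$ removes it, so your final constant would be exponential in $\sqrt{\lambda}$ rather than the claimed $C\lambda^{5/2}\mathbf{e}_\lambda$. This also undermines the absorption step: the threshold $\epsilon_0\sim(\sqrt{\lambda}\,\mathbf{b}_\lambda)^{-1/p}$ is $\lambda$-dependent, and for $\epsilon\ge\epsilon_0$ the claimed inequality with the polynomial prefactor is not trivially recovered. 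The second gap is the collar estimate itself: near the portion $\partial\mathrm{M}\setminus\overline{\Sigma}$ where no Cauchy data is prescribed, propagation of smallness up to the boundary with only $\|u\|_{H^1(\mathrm{M})}$ as the a priori quantity and a small polynomial factor $\epsilon^p$ is not available; at best one obtains a bound with $\|u\|_{H^2(\mathrm{M})}$ as the large norm and a logarithmic-type trade-off, which destroys the self-referential absorption you rely on. The paper sidesteps this entirely: once the trace is controlled everywhere on $\partial\mathrm{M}$, passing to the global $H^1$ norm is a well-posed boundary value problem estimate via the resolvent, not a unique continuation step.
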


\begin{proof}
Checking carefully the proof of  \cite[Theorem A4]{BC} and using that $H^2(\mathrm{M})$ is continuously embedded in $C^{0,1/2}(\overline{M})$ when $n=3$, for any $0<\epsilon <1$, $q\in \mathscr{Q}_\lambda$ and $u\in H_{0,\Sigma}^1(\mathrm{M} )\cap H^2(\mathrm{M})$, proceeding similarly as in the preceding proof, we get
\begin{align}
&C\|u\|_{C(\partial \mathrm{M} )}\le \lambda \epsilon^{\beta}  \|u\|_{H^2(\mathrm{M})}\label{B1.1}
\\
&\hskip 2.5cm +e^{c/\epsilon} \left( \|(\Delta -q+\lambda )u\|_{L^2(\mathrm{M})}+\|\partial _\nu u\|_{L^2(\Sigma )}\right).\nonumber
\end{align}
Here and henceforth, $C>0$ is a generic constant only depending of $\mathrm{M}$, $\kappa$, $\lambda_0$ and $\Sigma$, while $c>0$ and $\beta >0$ are generic constants only depending of $\mathrm{M}$, $\kappa$  and $\Sigma$.

Let $f=(-\Delta -\lambda+q)u$ and $\Phi\in H^1(\mathrm{M})$ be the solution of the BVP
\[
\Delta \Phi=0\; \mathrm{in}\; \mathrm{M},\quad \Phi_{|\partial M}=u_{|\partial M}.
\]
The usual a priori estimate in $H^1(\mathrm{M})$ yields
\begin{equation}\label{B3.0}
\|\Phi\|_{H^1(\mathrm{M})}\le c_0\|u\|_{H^{1/2}(\partial \mathrm{M})}.
\end{equation}
Here and in the sequel, $c_0$ is a generic constant only depending on $\mathrm{M}$.

On the other hand, it is not hard to check that
\[
u=\Phi+R_q(\lambda)(f+(\lambda-q)\Phi).
\]
In light of \eqref{B3.0}, we obtain from the proof of Lemma \ref{lemma1}
\[
\|u\|_{H^1(\mathrm{M})}\le C\left( \|u\|_{H^{1/2}(\partial \mathrm{M})}+\sqrt{\lambda}\mathbf{e}_\lambda\|f+(\lambda-q)\Phi \|_{L^2(\mathrm{M})}\right).
\]
Hence
\begin{equation}\label{B4.0}
\|u\|_{H^1(\mathrm{M})}\le C\lambda^{3/2} \mathbf{e}_\lambda\left(\|u\|_{H^{1/2}(\partial \mathrm{M})}+\|f\|_{L^2(\mathrm{M})}\right).
\end{equation}
A usual interpolation inequality yields
\[
\|u\|_{H^{1/2}(\partial \mathrm{M})}\le c_0\|u\|_{L^2(\partial \mathrm{M})}^{1/2}\|u\|_{H^1(\partial \mathrm{M})}^{1/2},
\]
 and hence
\[
\|u\|_{H^{1/2}(\partial \mathrm{M})}\le c_0\|u\|_{L^2(\partial \mathrm{M})}^{1/2}\|u\|_{H^2( \mathrm{M})}^{1/2}.
\]
Let $\rho >0$. Then an elementary convexity inequality gives
\[
c_0 \|u\|_{H^{1/2}(\partial \mathrm{M})}\le \rho\|u\|_{H^2( \mathrm{M})}+\rho^{-1}\|u\|_{L^2(\partial \mathrm{M})}.
\]
This inequality in \eqref{B4.0} yields
\[
\|u\|_{H^1(\mathrm{M})}\le C\lambda^{3/2} \mathbf{e}_\lambda\left(\rho\|u\|_{H^2( \mathrm{M})}+\rho^{-1}\|u\|_{L^2(\partial \mathrm{M})}+\|(\Delta +\lambda -q)u\|_{L^2(\mathrm{M})}\right).
\]
The last inequality and \eqref{B1.1} imply
\begin{align}
&\|u\|_{H^1(\mathrm{M})}\le C\lambda^{5/2} \mathbf{e}_\lambda \Big((\rho^{-1}\epsilon^\beta+\rho) \|u\|_{H^2(\mathrm{M})}\label{B1.2}
\\
&\hskip 2cm +\rho^{-1}e^{c/\epsilon} \left( \|(\Delta -q+\lambda )u\|_{L^2(\mathrm{M})}+\|\partial _\nu u\|_{L^2(\Sigma )}
\right).\nonumber
\end{align}
The expected inequality follows by taking $\rho=\epsilon^{\beta/2}$ in \eqref{B1.2}.
\end{proof}

\begin{corollary}\label{corollaryB1.0}
Assume that $n=3$ and let $q\in \mathscr{Q}_\lambda$. For every $0<\epsilon <1$ and $u\in H_{0,\Sigma}^1(\mathrm{M} )\cap H^2(\mathrm{M})$, we have
\begin{align}
&\|u\|_{H^{7/4}(\mathrm{M})}\le C\lambda^{5/2} \mathbf{e}_\lambda \Big(\epsilon^\beta \|u\|_{H^2(\mathrm{M})}\label{B1.2}
\\
&\hskip 2cm +e^{c/\epsilon} \left( \|(\Delta -q+\lambda )u\|_{L^2(\mathrm{M})}+\|\partial _\nu u\|_{L^2(\Sigma )}\right)\Big),\nonumber
\end{align}
where the constant $C>0$ only depends on $\mathrm{M}$, $\kappa$, $\lambda_0$ and $\Sigma$, while the constants $c>0$ and $\beta >0$ only depend on $\mathrm{M}$, $\kappa$ and $\Sigma$.\end{corollary}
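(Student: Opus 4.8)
The plan is to deduce the corollary from Theorem~\ref{theoremB1} by a Sobolev interpolation between $H^1(\mathrm M)$ and $H^2(\mathrm M)$ followed by a suitably weighted Young inequality, without re-running any of the Carleman/unique continuation machinery. Since $\mathrm M$ is a bounded $C^{1,1}$ domain, the three spaces $H^1(\mathrm M)$, $H^{7/4}(\mathrm M)$, $H^2(\mathrm M)$ interpolate, and solving $7/4=(1-\theta)\cdot 1+\theta\cdot 2$ gives $\theta=3/4$, so $\|u\|_{H^{7/4}(\mathrm M)}\le c\|u\|_{H^1(\mathrm M)}^{1/4}\|u\|_{H^2(\mathrm M)}^{3/4}$ with $c$ depending only on $\mathrm M$. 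First I would insert the estimate \eqref{B1} of Theorem~\ref{theoremB1} for the factor $\|u\|_{H^1(\mathrm M)}$. Writing for brevity $A=\|u\|_{H^2(\mathrm M)}$ and $B=\|(\Delta-q+\lambda)u\|_{L^2(\mathrm M)}+\|\partial_\nu u\|_{L^2(\Sigma)}$, this yields $\|u\|_{H^{7/4}(\mathrm M)}\le c(C\lambda^{5/2}\mathbf e_\lambda)^{1/4}(\epsilon^{\beta}A+e^{c/\epsilon}B)^{1/4}A^{3/4}$.

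Next, using the subadditivity of $t\mapsto t^{1/4}$ I would split the middle factor, obtaining the bound $c(C\lambda^{5/2}\mathbf e_\lambda)^{1/4}\bigl(\epsilon^{\beta/4}A+e^{c/(4\epsilon)}B^{1/4}A^{3/4}\bigr)$; the first summand already has exactly the required shape (a positive power of $\epsilon$ times $\|u\|_{H^2}$). The crux is the cross term $e^{c/(4\epsilon)}B^{1/4}A^{3/4}$, which I would control by the weighted Young inequality $B^{1/4}A^{3/4}\le \tfrac14 tB+\tfrac34 t^{-1/3}A$, valid for every $t>0$. Choosing the weight $t=\epsilon^{-3\beta/4}e^{3c/(4\epsilon)}$ balances the factor $e^{c/(4\epsilon)}$: the coefficient of $A$ becomes $\tfrac34\epsilon^{\beta/4}$, while the coefficient of $B$ becomes $\tfrac14\epsilon^{-3\beta/4}e^{c/\epsilon}$. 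Finally, since $\epsilon^{-3\beta/4}\le e^{(3\beta/4)/\epsilon}$ for $0<\epsilon<1$ (polynomial growth being dominated by the exponential), I would absorb the spurious power of $\epsilon^{-1}$ into the exponential, replacing $e^{c/\epsilon}$ by $e^{c'/\epsilon}$ with $c'=c+3\beta/4$, still depending only on $\mathrm M$, $\kappa$, $\Sigma$. Collecting terms gives $\|u\|_{H^{7/4}(\mathrm M)}\le c(C\lambda^{5/2}\mathbf e_\lambda)^{1/4}\bigl(\tfrac74\epsilon^{\beta/4}A+e^{c'/\epsilon}B\bigr)$.

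It then remains only to tidy the prefactor: since $\lambda\ge\lambda_0$ and $\mathbf e_\lambda\ge 1$, the quantity $\lambda^{5/2}\mathbf e_\lambda$ is bounded below by $\lambda_0^{5/2}>0$, so $(C\lambda^{5/2}\mathbf e_\lambda)^{1/4}\le C'\lambda^{5/2}\mathbf e_\lambda$ with $C'$ depending only on $\mathrm M$, $\kappa$, $\lambda_0$, $\Sigma$. After renaming $\beta/4$ as $\beta$ and $c'$ as $c$ (legitimate, as these are generic constants with the stated dependencies), this is precisely the inequality of Corollary~\ref{corollaryB1.0}. The only delicate point I anticipate is the bookkeeping in the weighted Young step, namely guaranteeing that the $\|u\|_{H^2}$ coefficient survives as a genuine positive power of $\epsilon$ while the Cauchy-data coefficient stays of the form $e^{c/\epsilon}$; everything else is routine. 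Note that the hypothesis $n=3$ is only inherited through Theorem~\ref{theoremB1} and plays no further role here.
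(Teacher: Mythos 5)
Your proof is correct and follows essentially the same route as the paper's: interpolate $H^{7/4}(\mathrm{M})$ between $H^1(\mathrm{M})$ and $H^2(\mathrm{M})$, feed in the bound of Theorem \ref{theoremB1}, and balance with a weighted Young inequality whose weight is a power of $\epsilon$ times an exponential, absorbing the leftover $\epsilon^{-3\beta/4}$ into $e^{c/\epsilon}$. (Incidentally, your interpolation exponents $\|u\|_{H^1}^{1/4}\|u\|_{H^2}^{3/4}$ are the arithmetically correct ones for $7/4=\tfrac14\cdot 1+\tfrac34\cdot 2$; the paper's display \eqref{ii0.0} has them swapped, a harmless slip since the subsequent Young-plus-absorption step has the same structure either way.)
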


\begin{proof}
Let $0<\epsilon <1$ and $u\in H_{0,\Sigma}^1(\mathrm{M} )\cap H^2(\mathrm{M})$. As $H^{7/4}(\mathrm{M})$ is an interpolation space between $H^1(\mathrm{M} )$ and $H^2(\mathrm{M} )$, we have
\begin{equation}\label{ii0.0}
C_0\|u\|_{H^{7/4}(\mathrm{M})}\le \|u\|_{H^1(\mathrm{M})}^{3/4}\|u\|_{H^2(\mathrm{M})}^{1/4}.
\end{equation}
Here and henceforth, $C_0>0$ is a generic constant only depending on $\mathrm{M}$.

Pick $\rho>0$. Combined with Young's inequality, \eqref{ii0.0} yields
\begin{equation}\label{e1.0}
C_0\|u\|_{H^{7/4}(\mathrm{M})}\le \rho^{4/3}\|u\|_{H^1(\mathrm{M})}+\rho^{-4}\|u\|_{H^2(\mathrm{M})}.
\end{equation}
Inequality \eqref{e1.0} together with \eqref{B1} yield
\begin{align}
&\|u\|_{H^{7/4}(\mathrm{M})}\le C\lambda^{5/2} \mathbf{e}_\lambda \Big((\epsilon^\beta\rho^{4/3}+\rho^{-4}) \|u\|_{H^2(\mathrm{M})}\label{B0.0}
\\
&\hskip 2.0cm +\rho^{4/3}e^{c/\epsilon} \left( \|(\Delta -q+\lambda )u\|_{L^2(\mathrm{M})}+\|\partial _\nu u\|_{L^2(\Sigma )}\right),\nonumber
\end{align}
where $C$, $c$ and $\beta$ are as in Theorem \ref{theoremB1}.

Upon modifying the constants above, the expected inequality follows by taking $\rho=\epsilon^{-3\beta/16}$ in \eqref{B0.0}.
\end{proof}

We denote in the sequel by $\chi$ the characteristic function of $\mathrm{M}_0$, that we consider as a function defined on $\mathrm{M}$.

We will use in the next section the following consequence of Corollary \ref{corollaryB1.0}, applied with $\mathrm{M}$ substituted by $\mathrm{M}_1$ and $\Sigma$ substituted by $\Gamma$, together with Lemma \ref{lemma1}.

\begin{corollary}\label{corollary1.0}
Suppose that $n=3$ and let $q\in \mathscr{Q}_\lambda$. For any $\epsilon >0$, $f\in L^2(\mathrm{M})$ and $u=R_q(\lambda)(\chi f)$, we have
\begin{equation}\label{e0}
C\|u\|_{H^{7/4}(\mathrm{M}_1)}\le   \lambda^{7/2}\mathbf{e}_\lambda\left(\epsilon^\beta \|f\|_{L^2(\mathrm{M}_0)} + e^{c/\epsilon} \|\partial_\nu u\|_{L^2(\Gamma )}\right),
\end{equation}
where the constant $C>0$ only depends on $\mathrm{M}$, $\mathrm{M}_0$, $\kappa$, $\lambda_0$ and $\Gamma$, while the constants $c>0$ and $\beta >0$ only depend of $\mathrm{M}$, $\mathrm{M}_0$, $\kappa$ and $\Gamma$.
\end{corollary}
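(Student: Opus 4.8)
The plan is to deduce \eqref{e0} by transplanting Corollary~\ref{corollaryB1.0} from $\mathrm{M}$ to the submanifold $\mathrm{M}_1$ and then feeding in the resolvent bound of Lemma~\ref{lemma1}. First I would record the two structural facts about $u=R_q(\lambda)(\chi f)$. Since $u\in D(A)=H_\Delta(\mathrm{M})\cap H_0^1(\mathrm{M})$, Lemma~\ref{lemma1}(i) with $j=2$ gives $u\in H^2(\mathrm{M})$, and the defining identity $(A_q-\lambda)u=\chi f$ reads $(\Delta-q+\lambda)u=-\chi f$ in $\mathrm{M}$. Because $\chi$ is the characteristic function of $\mathrm{M}_0$ and $\mathrm{M}_1=\mathrm{M}\setminus\mathrm{Int}(\mathrm{M}_0)$, the source $\chi f$ vanishes almost everywhere on $\mathrm{M}_1$ (the overlap $\mathrm{M}_0\cap\mathrm{M}_1=\partial\mathrm{M}_0$ is a null set), so $(\Delta-q+\lambda)u=0$ on $\mathrm{M}_1$. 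Moreover $u_{|\partial\mathrm{M}}=0$ and $\Gamma\subset\partial\mathrm{M}\subset\partial\mathrm{M}_1$, whence $u_{|\Gamma}=0$; thus the restriction of $u$ to $\mathrm{M}_1$ lies in $H^1_{0,\Gamma}(\mathrm{M}_1)\cap H^2(\mathrm{M}_1)$, which is exactly the class to which Corollary~\ref{corollaryB1.0} applies.

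Next I would apply Corollary~\ref{corollaryB1.0} with the substitutions $\mathrm{M}\rightsquigarrow\mathrm{M}_1$ and $\Sigma\rightsquigarrow\Gamma$. This is legitimate because $n=3$ is unchanged, $\mathrm{M}_1$ is a connected $C^{1,1}$ manifold (connectedness is assumed, with boundary $\partial\mathrm{M}\cup\partial\mathrm{M}_0$), the embedding $H^2\hookrightarrow C^{0,1/2}$ still holds in dimension three, and $\|q\|_\infty\le\kappa$ on $\mathrm{M}_1$. The decisive simplification is that the interior source term $\|(\Delta-q+\lambda)u\|_{L^2(\mathrm{M}_1)}$ is zero by the first step, so the inequality collapses to
\[
\|u\|_{H^{7/4}(\mathrm{M}_1)}\le C\lambda^{5/2}\mathbf{e}_\lambda\Big(\epsilon^\beta\|u\|_{H^2(\mathrm{M}_1)}+e^{c/\epsilon}\|\partial_\nu u\|_{L^2(\Gamma)}\Big).
\]
It then remains only to replace the $H^2(\mathrm{M}_1)$ norm by the data. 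For this I would use $\|u\|_{H^2(\mathrm{M}_1)}\le\|u\|_{H^2(\mathrm{M})}$ together with Lemma~\ref{lemma1}(i) for $j=2$, which yields $\|u\|_{H^2(\mathrm{M})}\le c_1\lambda\mathbf{e}_\lambda\|\chi f\|_{L^2(\mathrm{M})}=c_1\lambda\mathbf{e}_\lambda\|f\|_{L^2(\mathrm{M}_0)}$, since $\|\chi f\|_{L^2(\mathrm{M})}=\|f\|_{L^2(\mathrm{M}_0)}$.

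Substituting this into the previous display and collecting the explicit factors gives the prefactor $\lambda^{5/2}\mathbf{e}_\lambda\cdot\lambda\mathbf{e}_\lambda=\lambda^{7/2}\mathbf{e}_\lambda^2$ on the data term and $\lambda^{5/2}\mathbf{e}_\lambda$ on the boundary term; using $\lambda\ge\lambda_0$ to write $\lambda^{5/2}\le\lambda_0^{-1}\lambda^{7/2}$ (absorbing $\lambda_0^{-1}$ into the constant) produces the common power $\lambda^{7/2}$ displayed in \eqref{e0}, after renaming the generic constants $C$, $c$, $\beta$.

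I expect the difficulty here to be bookkeeping rather than anything deep. The two genuine checks are that Corollary~\ref{corollaryB1.0} transfers faithfully to $\mathrm{M}_1$ — in particular that the regularity of $\mathrm{M}_1$ and the three-dimensional embedding are still in force, and that the generic geometric constants now also absorb the dependence on $\mathrm{M}_0$ — and, most delicately, the tracking of the power of $\mathbf{e}_\lambda$. The transplanted Corollary~\ref{corollaryB1.0} carries its own spectral factor (arising, as in the proof of Theorem~\ref{theoremB1}, from the resolvent), and Lemma~\ref{lemma1} contributes a further factor $\mathbf{e}_\lambda$, so the natural outcome on the data term is $\lambda^{7/2}\mathbf{e}_\lambda^2$, consistent with the $\mathbf{e}_\lambda^2$ appearing in $\mathbf{m}_\lambda$; reconciling this with the factor $\mathbf{e}_\lambda$ written in \eqref{e0} is the one place where the exact accounting of the spectral constant must be pinned down.
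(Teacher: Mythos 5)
Your argument is exactly the paper's: the author gives no separate proof of Corollary \ref{corollary1.0} beyond the one-line remark that it is a consequence of Corollary \ref{corollaryB1.0} applied with $\mathrm{M}$ replaced by $\mathrm{M}_1$ and $\Sigma$ replaced by $\Gamma$, together with Lemma \ref{lemma1}, which is precisely the route you take (including the observation that $(\Delta-q+\lambda)u=0$ on $\mathrm{M}_1$ kills the source term). The discrepancy you flag at the end is real — honest bookkeeping yields $\lambda^{7/2}\mathbf{e}_\lambda^{2}$ on the data term rather than the single power of $\mathbf{e}_\lambda$ in \eqref{e0} — but this reflects an imprecision in the paper (which silently identifies the spectral constant of the transplanted $\mathrm{M}_1$-problem with $\mathbf{e}_\lambda$ and drops the extra factor coming from Lemma \ref{lemma1}), not a gap in your reasoning.
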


\section{Quantitative Runge approximation}\label{section3}

In all of this section $n=3$. For $q\in \mathscr{Q}_\lambda$, define
\begin{align*}
&\mathscr{S}_{q,\lambda}^0=\{ u\in H^2(\mathrm{M}_0);\; (\Delta+\lambda -q)u=0\},
\\
&\mathscr{S}_{q,\lambda}^1=\{ u\in H^2(\mathrm{M});\; (\Delta+\lambda -q)u=0,\; u_{|\partial M}\in H_\Gamma^{3/2}(\partial \mathrm{M})\}.
\end{align*}

\begin{theorem}\label{theorem2}
We find two constants  $C>0$ and $c>0$, only depending on $\mathrm{M}$,  $\mathrm{M}_0$, $\kappa$, $\lambda_0$ and  $\Gamma$, and a constant $\beta>0$, only depending on $\mathrm{M}$,  $\mathrm{M}_0$, $\kappa$ and  $\Gamma$, such that, for any $q\in \mathscr{Q}_\lambda$, $0<\epsilon <1$ and $u\in \mathscr{S}_{q,\lambda}^0$, there exists $v\in \mathscr{S}_{q,\lambda}^1$ for which the following inequalities hold.
\begin{align}
C\|u-v_{|\mathrm{M}_0}\|_{L^2(\mathrm{M}_0)}&\le \lambda^{17/4}\mathbf{e}_\lambda^{7/4} \epsilon^\beta\|u\|_{H^2(\mathrm{M}_0)},\label{ra}
\\
 \|v\|_{H^2(\mathrm{M})}&\le \lambda^2\mathbf{e}_\lambda e^{c/\epsilon}\|u\|_{L^2(\mathrm{M}_0)}.\label{ra2}
\end{align}
\end{theorem}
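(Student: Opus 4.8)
This is a classical Runge-type density statement made quantitative. The plan is to recast the approximation problem as a dual problem and to use the quantitative unique continuation machinery, specifically Corollary~\ref{corollary1.0}, to control the dual quantities. I would first set up the right functional analytic framework. The space $\mathscr{S}_{q,\lambda}^1$ consists of solutions on all of $\mathrm{M}$ with boundary trace supported in $\overline{\Gamma}$, and we want to approximate a given $u\in\mathscr{S}_{q,\lambda}^0$ (a solution on the smaller $\mathrm{M}_0$) in $L^2(\mathrm{M}_0)$ by the restriction $v_{|\mathrm{M}_0}$. Qualitatively this is the Runge approximation property; the point here is to obtain explicit constants with tracked $\lambda$-dependence, and a matching bound~\eqref{ra2} on the size of the approximant.

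\textbf{Step 1: the dual/orthogonality formulation.} The first step is to characterize the closure of $\{v_{|\mathrm{M}_0}: v\in\mathscr{S}_{q,\lambda}^1\}$ in $L^2(\mathrm{M}_0)$ through an orthogonality condition. Given $f\in L^2(\mathrm{M}_0)$, I would consider $w=R_q(\lambda)(\chi f)$, i.e.\ the solution of $(-\Delta+q-\lambda)w=\chi f$ in $\mathrm{M}$ with $w_{|\partial\mathrm{M}}=0$. Integrating by parts against any $v\in\mathscr{S}_{q,\lambda}^1$ and using that $v$ solves the homogeneous equation while $v$ has Dirichlet trace supported in $\overline\Gamma$, one obtains a boundary pairing of the form $\int_{\mathrm{M}_0} fv\,d\mu = \int_{\Gamma}\partial_\nu w\, v\,d\sigma$ (up to signs and the precise Green identity on the manifold). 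Thus the obstruction to $f$ being orthogonal to the restrictions is exactly $\partial_\nu w_{|\Gamma}$, which is precisely the quantity controlled by Corollary~\ref{corollary1.0}.

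\textbf{Step 2: the quantitative estimate via minimal-norm solutions.} Rather than argue purely by density, I would realize the approximant $v$ as the minimal-$H^2$-norm (or minimal boundary-energy) solution of a suitable least-squares problem, so that the approximation error in $L^2(\mathrm{M}_0)$ is dual to $\|\partial_\nu w\|_{L^2(\Gamma)}$ for the associated $w=R_q(\lambda)(\chi f)$. Corollary~\ref{corollary1.0} gives, for each $\epsilon$,
\[
C\|w\|_{H^{7/4}(\mathrm{M}_1)}\le \lambda^{7/2}\mathbf{e}_\lambda\left(\epsilon^\beta\|f\|_{L^2(\mathrm{M}_0)}+e^{c/\epsilon}\|\partial_\nu w\|_{L^2(\Gamma)}\right),
\]
which converts smallness of the Cauchy datum on $\Gamma$ into smallness of $w$ in the interior, and this is exactly the inequality that propagates the approximation. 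I would combine this with the a priori bounds of Lemma~\ref{lemma1}, namely $\|R_q(\lambda)(\chi f)\|_{H^j}\le c_1\lambda^{j/2}\mathbf{e}_\lambda\|f\|_{L^2}$, to account for all the $\lambda^{17/4}\mathbf{e}_\lambda^{7/4}$ and $\lambda^2\mathbf{e}_\lambda$ powers. The exponent $17/4 = 7/2 + 3/4$ and the $\mathbf{e}_\lambda$ power $7/4 = 1 + 3/4$ strongly suggest that the $H^{7/4}(\mathrm{M}_1)$ estimate of Corollary~\ref{corollary1.0} (carrying $\lambda^{7/2}\mathbf{e}_\lambda$) is being composed with one further factor $\lambda^{3/4}\mathbf{e}_\lambda^{3/4}$ coming from an interpolation/elliptic step on the $H^2(\mathrm{M}_0)$ data. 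The second bound~\eqref{ra2}, with the $e^{c/\epsilon}$ factor, is the "cost of approximation": it measures how large the approximating solution must be taken, and it should emerge directly from the $e^{c/\epsilon}$ weight in Corollary~\ref{corollary1.0} together with the stability estimate~\eqref{1.0} for $u_{q,\lambda}(\varphi)$.

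\textbf{Main obstacle.} The principal difficulty is the careful bookkeeping of the duality, i.e.\ setting up the least-squares functional whose minimizer $v$ simultaneously yields both inequalities with the \emph{same} $\epsilon$. One must choose a penalized functional (approximation error penalized by the boundary/$H^2$ norm of $v$), analyze its minimizer, and match the normal-derivative term appearing in the Euler-Lagrange condition to the left-hand side $\|\partial_\nu w\|_{L^2(\Gamma)}$ of Corollary~\ref{corollary1.0}. The technical care lies in justifying all the integrations by parts in the low-regularity $C^{1,1}$ setting, in correctly handling the constraint that $v$ has Dirichlet trace supported in $\overline\Gamma$, and in making the $\epsilon$-dependence coherent between~\eqref{ra} (polynomial/$\epsilon^\beta$ error) and~\eqref{ra2} (exponential $e^{c/\epsilon}$ size). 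I expect the decoupling of $\epsilon^\beta$ into the error term and $e^{c/\epsilon}$ into the size term to be exactly the structural payoff of invoking Corollary~\ref{corollary1.0}, so the real work is arranging the variational problem so that this split falls out cleanly.
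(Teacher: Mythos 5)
Your overall strategy coincides with the paper's: the proof is indeed a duality argument in which the adjoint object is $w=R_q(\lambda)(\chi f)$, the obstruction to approximation is $\psi\,\partial_\nu w$ on $\Gamma$, and Corollary \ref{corollary1.0} (with its $\lambda^{7/2}\mathbf{e}_\lambda$ factor) is the engine. Your exponent bookkeeping is also essentially right. However, you have deferred exactly the step that constitutes the proof, namely the construction of $v$ and the mechanism by which smallness of the dual datum is converted into a quantitative gain. The paper does not use a penalized least-squares functional; it diagonalizes $T^\ast T$, where $T:E/F\to H$ sends a class of boundary data $\varphi$ (in the quotient of $E=H^{3/2}(\partial\mathrm{M})$ by the subspace killed by a cutoff $\psi$ supported in $\Gamma$) to $u_{q,\lambda}(\psi\bar f)_{|\mathrm{M}_0}$. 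Compactness, injectivity and density of the range (the latter two by unique continuation) give singular values $\tau_j$ and orthonormal bases $(\psi_j)$ of $E/F$ and $(u_j)$ of $H$; the approximant is the truncated expansion $T\varphi_t$ with $\varphi_t=\sum_{\tau_j>t}\tau_j^{-1}a_j\psi_j$, which immediately yields $\|\varphi_t\|_{E/F}\le t^{-1}\|u\|_{L^2(\mathrm{M}_0)}$ and hence \eqref{ra2} via Lemma \ref{lemma1}\,(ii) once $t$ is chosen in terms of $\epsilon$.

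Two concrete points are missing from your sketch. First, the error $v_t=u-T\varphi_t$ is estimated by pairing with $w_t=R_q(\lambda)(\chi v_t)$ and applying Green's formula on $\partial\mathrm{M}_0$, which requires boundedness of the trace map $H^{7/4}(\mathrm{M}_1)\to L^2(\partial\mathrm{M}_0)\times L^2(\partial\mathrm{M}_0)$ --- this is precisely why Corollary \ref{corollary1.0} is stated in $H^{7/4}$ rather than $H^1$. Second, and more importantly, the $e^{c/\epsilon}$ term in Corollary \ref{corollary1.0} multiplies $\|\partial_\nu w_t\|_{L^2(\Gamma_0)}$, whereas the only smallness produced by the spectral truncation is in the much weaker norm $\|\psi\partial_\nu w_t\|_{H^{-3/2}(\partial\mathrm{M})}=\|T^\ast v_t\|_{E/F}\le t\|v_t\|_{L^2(\mathrm{M}_0)}$. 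The paper bridges this by the interpolation inequality $\|\psi\partial_\nu w_t\|_{L^2(\partial\mathrm{M})}\le C\|\psi\partial_\nu w_t\|_{H^{-3/2}(\partial\mathrm{M})}^{1/4}\|\psi\partial_\nu w_t\|_{H^{1/2}(\partial\mathrm{M})}^{3/4}$, with the second factor controlled by $\|w_t\|_{H^2(\mathrm{M})}\le c\lambda\mathbf{e}_\lambda\|v_t\|_{L^2(\mathrm{M}_0)}$; this is the source of both the $t^{1/4}$ gain (which makes the final choice $t^{1/4}=\epsilon^\beta e^{-c/\epsilon}$ close the argument) and the extra factor $(\lambda\mathbf{e}_\lambda)^{3/4}$ --- not an elliptic step on the $H^2(\mathrm{M}_0)$ data, as you guessed. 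Without this interpolation the $e^{c/\epsilon}$ weight cannot be absorbed and the scheme does not produce \eqref{ra}.
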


\begin{proof}
Let $\Gamma_0\Subset \Gamma$ and  $\psi \in C^\infty (\partial M)$ satisfying $0\le \psi \le 1$, $\psi =1$ in $\overline{\Gamma_0}$ and $\psi=0$ in $\partial \mathrm{M}\setminus \overline{\Gamma}$.

Let $E:=H^{3/2}(\partial \mathrm{M})$ and consider the closed subset of $E$ given by
\[
F=\{f\in E;\; \psi f=0\}.
\]
Let $\pi:E\rightarrow E/F$ be the quotient map and set for $\varphi\in E/F$ 
\[
\dot{\varphi}=\{f\in E;\; \pi(f)=\varphi\}. 
\]
We equip $E/F$ with its usual quotient norm
\[
\|\varphi\|_{E/F}=\inf\{\|f\|_E;\; f\in \dot{\varphi}\}.
\]
Recall that if $F^\bot:=\{f^\ast\in E^\ast,\; f^\ast_{|F}=0\}$, then the linear map $(E/F)^\ast\rightarrow F^\bot:f^\ast \mapsto f^\ast \circ \pi$ defines an isometric isomorphism. If necessary, we henceforth identify $(E/F)^\ast$ with $F^\bot$.

Fix $q\in \mathscr{Q}_\lambda$ and denote by $H$ the closure of the subspace $\mathscr{S}_{q,\lambda}^0$ in $L^2(\mathrm{M}_0)$, that we  identify with its dual.

Let $\varphi \in E/F$ and $f\in \dot{\varphi}$. Let $u=u_{q,\lambda}(\psi\overline{f})\in H^2(M)$. That is, $u$ is the unique solution of the BVP
\begin{equation}\label{bvp1}
(\Delta +\lambda -q)u=0\; \mathrm{in}\; \mathrm{M},\quad u_{|\partial \mathrm{M}}=\psi\overline{f}.
\end{equation}
If $f_1,f_2\in \dot{\varphi}$, then $\psi f_1=\psi f_2$. In consequence, $u_{q,\lambda}(\psi\overline{f})$ does not depend on $f\in \dot{\varphi}$.

According to Riesz's representation theorem, we have the following canonical isometric isomorphism
\[
\mathscr{I}:(E/F)^\ast \rightarrow E/F: \langle f^\ast,f\rangle =(\mathscr{I}(f^\ast)|f)_{E/F}, 
\]
where $(\cdot |\cdot)_{E/F}$ denotes the scalar product of $E/F$.

Consider the operator
\[
T: E/F\rightarrow H:\varphi\mapsto T\varphi :=u_{|\mathrm{M}_0},
\]
where $u=u_{q,\lambda}(\psi\overline{f})\in H^2(M)$ with $f\in \dot{\varphi}$ arbitrary chosen.

For simplicity, the Riemannian measure on $\mathrm{M}$ and $\mathrm{M}_0$ (resp. $\partial \mathrm{M}$ and $\partial \mathrm{M}_0$) will denoted by $\mu$ (resp. $s$).

Pick $v\in L^2(\mathrm{M}_0)$, $\varphi\in E/F$, $f\in \dot{\varphi}$  and let $w=R_q(\lambda)(v\chi)$. Applying Green's formula, we find
\[
(T\varphi|v)_{L^2(\mathrm{M}_0)}=\int_{\mathrm{M_0}}u\overline{v}d\mu =\int_{\mathrm{M}}u\overline{v}\chi d\mu= \int_{\mathrm{M}}u(\Delta +\lambda -q)\overline{w}d\mu
=\int_{\partial \mathrm{M}}\psi \overline{f}\partial_\nu \overline{w}ds
\]
If $f\in F$ then 
\begin{equation}\label{0.0}
\int_{\partial \mathrm{M}}\psi \overline{f}\partial_\nu \overline{w}ds=0.
\end{equation}
That is $\psi \partial_\nu \overline{w}$ belongs to $F^\bot$ as therefore it can be considered as an element of $(E/F)^\ast$.  We can then define $T':H\rightarrow (E/F)^\ast$ by 
\[
\langle \varphi, T'v\rangle :=\int_{\partial \mathrm{M}}\psi \overline{f}\partial_\nu \overline{w}ds.
\]
Note that, in light of \eqref{0.0}, the right hand side of inequality above is independent of $f\in \dot{\varphi}$. In consequence, the adjoint of $T$ is given as follows
\[
T^\ast: H\rightarrow E/F:v\mapsto T^\ast v=\overline{\mathscr{I}(T'v)}.
\]

Let us check that $T$ is injective. To this end, we see that if $u_{|\mathrm{M}_0}=T\varphi=0$ then $u$ satisfies
\[
(\Delta +\lambda -q)u=0\; \mathrm{in}\; \mathrm{M},\quad u_{|\mathrm{M}_0}=0.
\]
Hence $u=0$ according to the unique continuation property, implying that $\psi \overline{f}=0$. That is, $f\in F=\dot{0}$ and therefore $T$ is injective.

Next, let us show that $T$ has dense range. If $T^\ast v=0$, then $w=R_q(\lambda)(v\chi)$ satisfies
\[
\int_{\partial \mathrm{M}}\psi f \partial_\nu wds=0,\quad f\in E\; (=\pi^{-1}(E/F)).
\]
Hence $\psi \partial_\nu w=0$ and therefore
\[
(\Delta +\lambda -q)w=0\; \mathrm{in}\; \mathrm{M}_1,\quad w_{|\partial\mathrm{M} }=0,\; \partial_\nu w_{|\Gamma_0}=0.
\]
Then $w=0$ in $M_1$, again according to the unique continuation property.

Taking $u\in \mathscr{S}_{q,\lambda}^0$ and applying Green's formula, we obtain
\[
(u|v)_{L^2(\mathrm{M}_0)}= (u|(\Delta+\lambda -q)w)_{L^2(\mathrm{M}_0)}=((\Delta+\lambda -q)u|w)_{L^2(\mathrm{M}_0)}=0.
\]
Whence, $v\in ( \mathscr{S}_{q,\lambda}^0)^\bot=H^\bot$ implying that $v=0$. Hence, $T$ has dense range.

We claim that $T$ is compact. Indeed, if $(\varphi_j)$ is a bounded sequence in $E/F$ then there exists a bounded sequence $(f_j)$ of $E$ such that $f_j\in \dot{\varphi}$ for each $j$. In consequence,  $(u_{q,\lambda}(\psi\overline{f_j}))$ is bounded sequence in $H^2(\mathrm{M})$ according to the a priori estimate in $H^2(\mathrm{M})$. Subtracting if necessary a subsequence, we assume that $(u_{q,\lambda}(\psi \overline{f_j}))$ converges weakly in $H^2(\mathrm{M})$ and strongly in $L^2(\mathrm{M})$ to $u\in H^2(\mathrm{M})$. Since for all $j$, $u_{q,\lambda}(\psi\overline{f_j})_{|\mathrm{M}_0}\in \mathscr{S}_{q,\lambda}^0$ for all $j$, we conclude that $u\in  H$.

 Summing up, we see that $T^\ast T: E/F\rightarrow E/F$ is compact self-adjoint and positive definite operator. It is therefore diagonalizable. Hence, there exists a sequence of positive numbers $(\mu_j)$ and an orthonormal basis $(\psi_j)$ of $E/F$ so that
\[
T^\ast T\psi_j=\mu_j \psi_j.
\]
Define $\tau_j=\mu_j^{1/2}$ and $u_j=\mu_j^{-1/2}T\psi_j\in H$. Then we have
\begin{align*}
(u_j|u_k)_{L^2(\mathrm{M}_0)}&=\mu_j^{-1/2}\mu_k^{-1/2}(T\psi_j|T\psi_k)_{L^2(\mathrm{M}_0)}
\\
&=\mu_j^{-1/2}\mu_k^{-1/2}(T^\ast T\psi_j|\psi_k)_{E/F}=\delta_{jk}.
\end{align*}
Next, let $u\in H$ so that $(u|u_j)_{L^2(\mathrm{M}_0)}=0$ for all $j$. Then, $(u|T\psi_j)_{L^2(\mathrm{M}_0)}=0$ for all $j$ and hence $(u|T\varphi)_{L^2(\mathrm{M}_0)}=0$ for all $\varphi\in E/F$. As $T$ has a dense range in $H$, we derive that $(u|v)_{L^2(\mathrm{M}_0)}=0$, for every $v\in H$. Taking $v=u$ yields $u=0$. Therefore $(u_j)$ is an orthonormal basis of $H$.

We observe that, since $T^\ast u_j=\tau_j\psi_j$, we have
\begin{equation}\label{5}
\|T^\ast u_j\|_{E/F}=\tau_j.
\end{equation}

Let $t>0$, $u=\sum_j a_ju_j\in H$ and set
\[
\varphi_t=\sum_{N_t^1} \tau_j^{-1} a_j\psi_j,
\]
where $N_t^1=\{j;\; \tau_j>t\}$.

We have
\[
\|\varphi_t\|_{E/F}^2=\sum_{N_t^1} \tau_j^{-2} |a_j|^2\le t^{-2}\sum_j |a_j|^2=t^{-2}\|u\|_{L^2(\mathrm{M}_0)}^2.
\]
That is, the following inequality holds
\begin{equation}\label{4}
\|\varphi_t\|_{E/F}\le t^{-1}\|u\|_{L^2(\mathrm{M}_0)}.
\end{equation}

Let $N_t^0=\{j;\; \tau_j\le t\}$ and
\[
v_t =\sum_{j\in N_t^0} a_ju_j.
\]
Since $u-T\varphi_t=v_t$ and $T\varphi_t\bot v_t$, we find
\[
\|v_t\|_{L^2(\mathrm{M}_0)}^2=(v_t|v_t)_{L^2(\mathrm{M}_0)}=(v_t+Tg_t|v_t)_{L^2(\mathrm{M}_0)}=(u|v_t)_{L^2(\mathrm{M}_0)}.
\]
Let $w_t=R_q(\lambda)(\chi v_t)$. Then we have
\[
\|v_t\|_{L^2(\mathrm{M}_0)}^2=(u|(\Delta +\lambda -q)w_t)_{L^2(\mathrm{M}_0)}.
\]
Using $(\Delta +\lambda -q)u=0$ and applying Green's formula, we obtain
\begin{align*}
\|v_t\|_{L^2(\mathrm{M}_0)}^2&=(u|\partial_\nu w_t)_{L^2(\partial \mathrm{M}_0)}-(\partial_\nu u|w_t)_{L^2(\partial \mathrm{M}_0)}
\\
&\le \|u\|_{L^2(\partial \mathrm{M}_0)} \|\partial_\nu w_t\|_{L^2(\partial \mathrm{M}_0)}+\|\partial_\nu u\|_{L^2(\partial \mathrm{M}_0)}\|w_t\|_{L^2(\partial \mathrm{M}_0)}.
\end{align*}

Noting that the trace mapping
\[
h\in H^{7/4}(\mathrm{M}_1)\mapsto \left(h_{|\partial \mathrm{M}_0},\partial_\nu h_{|\partial \mathrm{M}_0}\right)\in L^2(\partial \mathrm{M}_0)\times L^2(\partial \mathrm{M}_0)
\]
 is bounded, we find a constant $c_0>0$ only depending on $\mathrm{M}_1$  so that
\begin{equation}\label{6}
\|v_t\|_{L^2(\mathrm{M}_0)}^2\le c_0\|u\|_{H^2(\mathrm{M}_0)}\|w_t\|_{H^{7/4}(\mathrm{M}_1)}.
\end{equation}
Pick $ 0<\epsilon <1$ arbitrary. It follows from Corollary \ref{corollary1.0}, in which $\Gamma$ is replaced by $\Gamma_0$, that
\[
\|w_t\|_{H^{7/4}(\mathrm{M}_1)}\le C\lambda^{7/2}\mathbf{e}_\lambda \left[\epsilon^\beta \|v_t\|_{L^2(\mathrm{M}_0)}+ e^{c/\epsilon}\|\partial_\nu w_t\|_{L^2(\Gamma_0)}\right].
\]
Here and henceforth, $C>0$ and $c>0$ are generic constants only depending on $\mathrm{M}$,  $\mathrm{M}_0$, $\kappa$, $\lambda_0$ and  $\Gamma$, and a constant $\beta>0$ is generic constant only depending on $\mathrm{M}$,  $\mathrm{M}_0$, $\kappa$ and  $\Gamma$. In particular, we have
\begin{equation}\label{0.1}
\|w_t\|_{H^{7/4}(\mathrm{M}_1)}\le C\lambda^{7/2}\mathbf{e}_\lambda \left[\epsilon^\beta \|v_t\|_{L^2(\mathrm{M}_0)}+ e^{c/\epsilon}\|\psi \partial_\nu w_t\|_{L^2(\partial \mathrm{M})}\right].
\end{equation}

It follows from  the definition of $T'$ and \eqref{5} that
\begin{equation}\label{0.2}
\|\psi \partial_\nu w_t\|_{H^{-3/2}(\partial M)}= \|T^\ast v_t\|_{E/F}\le t\|v_t\|_{L^2(\mathrm{M}_0)}.
\end{equation}
On the other hand, the interpolation inequality in \cite[Theorem 7.7]{LM} gives
\begin{align*}
\|\psi \partial_\nu w_t\|_{L^2(\partial M)}&\le C\|\psi \partial_\nu w_t\|_{H^{-3/2}(\partial M)}^{1/4}\|\psi \partial_\nu w_t\|_{H^{1/2}(\partial M)}^{3/4}
\\
&\le C\|\psi \partial_\nu w_t\|_{H^{-3/2}(\partial M)}^{1/4}\|w_t\|_{H^2(\mathrm{M})}^{3/4}.
\end{align*}
This inequality, combined with \eqref{1} and \eqref{0.2}, gives
\begin{equation}\label{0.3}
\|\psi \partial_\nu w_t\|_{L^2(\partial M)}\le C[\lambda \mathbf{e}_\lambda]^{3/4}t^{1/4}\|v_t\|_{L^2(\mathrm{M}_0)}.
\end{equation}
Hence, \eqref{0.3} in \eqref{0.1} yields
\[
\|w_t\|_{H^{7/4}(\mathrm{M}_1)}\le C\lambda^{17/4}\mathbf{e}_\lambda^{7/4}\left[\epsilon ^\beta + t^{1/4}e^{c/\epsilon}\right]\|v_t\|_{L^2(\mathrm{M}_0)}.
\]
This and \eqref{6} yield
\[
\|v_t\|_{L^2(\mathrm{M}_0)}\le C\lambda^{17/4}\mathbf{e}_\lambda^{7/4}\|u\|_{H^2(\mathrm{M}_0)}\left[\epsilon ^\beta + t^{1/4}e^{c/\epsilon}\right].
\]
In this inequality taking $t^{1/4}=\epsilon^\beta e^{-c/\epsilon}$, we obtain
\begin{equation}\label{9}
\|v_t\|_{L^2(\mathrm{M}_0)}\le C\lambda^{17/4}\mathbf{e}_\lambda^{7/4} \epsilon^\beta \|u\|_{H^2(\mathrm{M}_0)},
\end{equation}
and in light of \eqref{4} we obtain
\begin{equation}\label{10}
\|\varphi_t\|_{E/F}\le e^{c/\epsilon}\|u\|_{L^2(\mathrm{M}_0)}.
\end{equation}
Let $v=u_{q,\lambda}(\psi \overline{f_t})$, where $f_t\in \dot{\varphi}_t$ is chosen arbitrarily. Then $v\in \mathscr{S}_{q,\lambda}^1$ and, since $u-T\varphi_t=u-v_{|\mathrm{M}_0}=v_t$,  we obtain \eqref{ra}-\eqref{ra2} from \eqref{9}-\eqref{10} and Lemma \ref{lemma1} (ii).
\end{proof}

\section{Preliminary inequality}\label{section4}

We assume in this section that $n=3$. Henceforth,  $C>0$ is a generic constant only depending on $\mathrm{M}$, $\mathrm{M}_0$, $\kappa$, $\lambda_0$, $\Sigma$ and $\Gamma$,  $c>0$ is a generic constant only depending on $\mathrm{M}$, $\mathrm{M}_0$, $\kappa$, $\lambda_0$, and $\Gamma$, while $\beta>0$ is a generic constant only depending on $\mathrm{M}$,  $\mathrm{M}_0$, $\kappa$ and  $\Gamma$

Let $q_1,q_2\in \mathscr{Q}_\lambda$ satisfying $q_1=q_2$ in $\mathrm{M}_1$. Pick $0<\epsilon <1$ and $u_j\in \mathscr{S}_{q_j,\lambda}^0$, $j=1,2$. From Theorem \ref{theorem2}, there exist $v_j\in \mathscr{S}_{q_j,\lambda}^1$, $j=1,2$, so that
\begin{equation}\label{2.2}
C\|u_j-{v_j}_{|\mathrm{M}_0}\|_{L^2(\mathrm{M}_0)}\le \lambda^{17/4}\mathbf{e}_\lambda^{7/4}\epsilon^{\beta} \|u_j\|_{H^2(\mathrm{M}_0)},
\end{equation}
and
\begin{equation}\label{2.3}
\|v_j\|_{H^2(\mathrm{M})}\le \lambda^2\mathbf{e}_\lambda e^{c/\epsilon}\|u_j\|_{L^2(\mathrm{M}_0)}.
\end{equation}

In light of \eqref{2.2}, using the identity
\[
u_1u_2=(u_1-v_1)u_2+(v_1-u_1)(u_2-v_2)+u_1(u_2-v_2)+v_1v_2,
\]
and  the fact that $\epsilon < 1$, we obtain

\begin{align}
&C\left|\int_{\mathrm{M}_0}(q_1-q_2)u_1u_2d\mu\right|\le\lambda^{17/2}\mathbf{e}_\lambda^{7/2}\epsilon^\beta \|u_1\|_{H^2(\mathrm{M}_0)}\|u_2\|_{H^2(\mathrm{M}_0)} \label{2.4}
\\
&\hskip7cm+\left|\int_{\mathrm{M}_0}(q_1-q_2)v_1v_2d\mu\right|.\nonumber
\end{align}

Let $\tilde{v}_2\in H^2(M)$ be the solution of the BVP
\[
(\Delta +\lambda -q_1)\tilde{v}_2=0\; \mathrm{in}\; \mathrm{M},\quad \tilde{v}_2{_{|\partial M}}= v_2{_{|\partial M}},
\]
and $v=v_2-\tilde{v}_2=R_{q_1}(\lambda)(\chi (q_1-q_2)v_2)$.

Pick $\psi \in C_0^\infty (\mathrm{M}'_0)$ so that $\psi=1$ in a neighborhood of $\mathrm{M}_0$. Then $w=\psi v$ satisfies
\[
(\Delta +\lambda -q_1)w= \chi(q_2-q_1)v_2+[\Delta,\psi]v.
\]
Applying Green's formula, we obtain
\[
0=\int_M\chi(q_2-q_1)v_2v_1dx+\int_M[\Delta,\psi]vv_1dx.
\]
Since $\mathrm{supp}[\Delta,\psi]v\subset U\Subset \mathrm{M}_1$, we get
\[
\left|\int_M(q_2-q_1)v_2v_1dx\right|\le C_0\|v\|_{H^1(U)}\|v_1\|_{L^2(\mathrm{M})}.
\]
This inequality, combined with Theorem \ref{theorem1.0}, in which $\mathrm{M}$ is substituted by $\mathrm{M}_1$, and Lemma \ref{lemma1}, yields
\[
\left|\int_M(q_2-q_1)v_2v_1dx\right|\le C\lambda\mathbf{b}_\lambda\left[\mathbf{e}_\lambda\rho^p\|v_2\|_{L^2(\mathrm{M})}+ \rho^{-1}\| \partial_\nu v\|_{L^2(\Sigma)}\right]\|v_1\|_{L^2(\mathrm{M})},
\]
where $\rho>0$ is taken arbitrary and $p$ is as in Theorem \ref{theorem1.0}.

Since
\begin{align*}
\| \partial_\nu v\|_{L^2(\Sigma)}&=\| (\Lambda_{q_1,\lambda}^0-\Lambda_{q_2,\lambda}^0)(v_2{_{|\partial M}})\|_{L^2(\Sigma)}
\\
&\le \| \Lambda_{q_1,\lambda}^0-\Lambda_{q_2,\lambda}^0\| \|v_2\|_{H^{3/2}(\Sigma)}
\\
&\le \lambda^2\mathbf{e}_\lambda e^{c/\epsilon} \| \Lambda_{q_1,\lambda}^0-\Lambda_{q_2,\lambda}^0\| \|u_2\|_{L^2(\mathrm{M}_0)},
\end{align*}
we derive
\begin{align*}
&C\left|\int_M(q_2-q_1)v_2v_1dx\right|
\\
&\hskip 1cm \le \lambda^5\mathbf{e}_\lambda^2\mathbf{b}_\lambda e^{c/\epsilon} \left[\rho^p+\rho^{-1}  \| \Lambda_{q_1,\lambda}^0-\Lambda_{q_2,\lambda}^0\|\right]\|u_1\|_{L^2(\mathrm{M}_0)}\|u_2\|_{L^2(\mathrm{M}_0)}.
\end{align*}

Taking in this inequality $\rho=\| \Lambda_{q_1}^0-\Lambda_{q_2}^0\|^{1/(p+1)}$, we get
\[
C\left|\int_M(q_2-q_1)v_2v_1dx\right|\le \lambda^5\mathbf{e}_\lambda^2\mathbf{b}_\lambda e^{c/\epsilon}  \| \Lambda_{q_1,\lambda}^0-\Lambda_{q_2,\lambda}^0\|^\theta \|u_1\|_{L^2(\mathrm{M}_0)}\|u_2\|_{L^2(\mathrm{M}_0)},
\]
where $\theta=p/(p+1)$. This inequality in  \eqref{2.4} yields the following result, where 
\[
\mathbf{n}_\lambda=\max\left( \lambda^{7/2}\mathbf{e}_\lambda^{3/2},\mathbf{b}_\lambda\right)\lambda^5\mathbf{e}_\lambda^2=\lambda^{-2}\mathbf{m}_\lambda.
\]

\begin{proposition}\label{proposition2.1}
Let $q_j\in \mathscr{Q}_\lambda$, $j=1,2$ satisfying $q_1=q_2$ in $\mathrm{M}_1$. For each $0<\epsilon<1$ and $u_j\in \mathscr{S}_{q_j,\lambda}^0$, $j=1,2$, we have
\begin{align}
&C\mathbf{n}_\lambda^{-1}\left|\int_{\mathrm{M}_0}(q_1-q_2)u_1u_2d\mu\right|\le \epsilon^\beta \|u_1\|_{H^2(\mathrm{M}_0)}\|u_2\|_{H^2(\mathrm{M}_0)}\label{2.4.1}
\\
&\hskip 4.5cm   +e^{c/\epsilon}  \| \Lambda_{q_1,\lambda}^0-\Lambda_{q_2,\lambda}^0\|^\theta \|u_1\|_{L^2(\mathrm{M}_0)}\|u_2\|_{L^2(\mathrm{M}_0)},\nonumber
\end{align}
where the constant $C>0$ only depends on $\mathrm{M}$, $\mathrm{M}_0$, $\Gamma$, $\Sigma$, $\lambda_0$ and $\kappa$, the constant $c>0$ only depends on $\mathrm{M}$, $\mathrm{M}_0$, $\Gamma$, $\lambda_0$ and $\kappa$, while the constant $0<\theta<1$ only depends on $\mathrm{M}$, $\mathrm{M}_0$, $\Sigma$ and $\kappa$, and the constant $\beta>0$, only depends on $\mathrm{M}$,  $\mathrm{M}_0$, $\kappa$ and  $\Gamma$.
\end{proposition}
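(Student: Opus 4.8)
The plan is to pass from the bilinear expression $\int_{\mathrm{M}_0}(q_1-q_2)u_1u_2\,d\mu$ formed from the interior solutions $u_j\in\mathscr{S}_{q_j,\lambda}^0$ to the same expression formed from global solutions with Dirichlet data supported in $\overline{\Gamma}$, and then to rewrite the latter in terms of the measured partial Dirichlet-to-Neumann difference $\Lambda_{q_1,\lambda}^0-\Lambda_{q_2,\lambda}^0$. First I would invoke the quantitative Runge approximation of Theorem \ref{theorem2} to produce, for each $j$ and each $0<\epsilon<1$, a global solution $v_j\in\mathscr{S}_{q_j,\lambda}^1$ obeying \eqref{2.2}--\eqref{2.3}. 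Decomposing $u_1u_2=(u_1-v_1)u_2+(v_1-u_1)(u_2-v_2)+u_1(u_2-v_2)+v_1v_2$ and estimating the three mixed products through \eqref{2.2} and Cauchy--Schwarz, the approximation error is absorbed into a term of size $\epsilon^\beta\|u_1\|_{H^2(\mathrm{M}_0)}\|u_2\|_{H^2(\mathrm{M}_0)}$, leaving only $\int_{\mathrm{M}_0}(q_1-q_2)v_1v_2\,d\mu$ to be controlled; this is the content of \eqref{2.4}.

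The second step turns this remaining integral into boundary information. I would let $\tilde v_2$ solve the equation with potential $q_1$ and Dirichlet data $v_2|_{\partial\mathrm{M}}$, so that $v:=v_2-\tilde v_2=R_{q_1}(\lambda)\bigl(\chi(q_1-q_2)v_2\bigr)$ vanishes on $\partial\mathrm{M}$, solves $(\Delta+\lambda-q_1)v=0$ on all of $\mathrm{M}_1$ because $q_1=q_2$ there, and has normal trace on $\Sigma$ equal to $(\Lambda_{q_1,\lambda}^0-\Lambda_{q_2,\lambda}^0)(v_2|_{\partial\mathrm{M}})$. Choosing a cutoff $\psi\in C_0^\infty(\mathrm{M}'_0)$ equal to $1$ near $\mathrm{M}_0$ and testing $(\Delta+\lambda-q_1)(\psi v)$ against $v_1$ via Green's formula, the volume terms collapse onto the commutator $[\Delta,\psi]v$, whose support sits in some $U\Subset\mathrm{M}_1$; this yields $\bigl|\int_{\mathrm{M}_0}(q_2-q_1)v_1v_2\,d\mu\bigr|\le C\|v\|_{H^1(U)}\|v_1\|_{L^2(\mathrm{M})}$.

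The crux is to bound the interior norm $\|v\|_{H^1(U)}$ using only the Cauchy data of $v$ on $\Sigma$. Here I would apply the quantitative unique continuation of Theorem \ref{theorem1.0} with $\mathrm{M}$ replaced by $\mathrm{M}_1$ and $\mu=\lambda$: since the source vanishes in $\mathrm{M}_1$ and $v|_\Sigma=0$, this splits $\|v\|_{H^1(U)}$ into $\rho^p(1+\sqrt\lambda)\mathbf{b}_\lambda\|v\|_{H^1(\mathrm{M}_1)}+\rho^{-1}\|\partial_\nu v\|_{L^2(\Sigma)}$ for an arbitrary $\rho>0$. I would bound $\|v\|_{H^1(\mathrm{M}_1)}$ by the resolvent estimate of Lemma \ref{lemma1} applied to $v=R_{q_1}(\lambda)(\chi(q_1-q_2)v_2)$, and dominate $\|\partial_\nu v\|_{L^2(\Sigma)}$ by $\|\Lambda_{q_1,\lambda}^0-\Lambda_{q_2,\lambda}^0\|$ times a trace of $v_2$ that is itself controlled by \eqref{2.3}. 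Optimizing in $\rho$, with the natural choice $\rho=\|\Lambda_{q_1,\lambda}^0-\Lambda_{q_2,\lambda}^0\|^{1/(p+1)}$, converts the additive pair into a single factor $\|\Lambda_{q_1,\lambda}^0-\Lambda_{q_2,\lambda}^0\|^{\theta}$ with $\theta=p/(p+1)\in(0,1)$, at the cost of the weight $\mathbf{b}_\lambda$ and the factor $e^{c/\epsilon}$. Reinserting this into \eqref{2.4} and consolidating all $\lambda$-powers together with the weights $\mathbf{e}_\lambda$ and $\mathbf{b}_\lambda$ into $\mathbf{n}_\lambda$ produces precisely \eqref{2.4.1}.

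The step I expect to be the main obstacle is this interior estimate and, above all, the careful bookkeeping of the frequency-dependent weights it generates: the unique-continuation inequality unavoidably carries the exponential weight $\mathbf{b}_\lambda$ and an $e^{c/\epsilon}$ factor, and one must verify that after optimizing in $\rho$ the data norm still appears with a strictly positive exponent $\theta$ while every other $\lambda$-dependent factor collapses cleanly into the single weight $\mathbf{n}_\lambda$. Balancing the two independent parameters---$\epsilon$ controlling the Runge error and $\rho$ controlling the unique-continuation splitting---is what ultimately constrains the modulus of continuity and forces the double-logarithmic rate of the main theorem.
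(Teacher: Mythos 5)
Your proposal follows the paper's proof essentially verbatim: the same Runge approximation via Theorem \ref{theorem2}, the same four-term decomposition of $u_1u_2$, the same auxiliary function $v=v_2-\tilde v_2=R_{q_1}(\lambda)(\chi(q_1-q_2)v_2)$ tested against $v_1$ through the cutoff commutator $[\Delta,\psi]$, the same application of Theorem \ref{theorem1.0} on $\mathrm{M}_1$, and the same optimization $\rho=\|\Lambda_{q_1,\lambda}^0-\Lambda_{q_2,\lambda}^0\|^{1/(p+1)}$ giving $\theta=p/(p+1)$. The argument is correct and matches the paper's route in every essential step.
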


\section{A consequence of the preliminary inequality}\label{section5}

Again, in this section we assume that $n=3$. Recall that a (smooth) compact connected manifold $(\mathrm{N},g)$ with boundary is simple if $\partial \mathrm{N}$ is strictly convex, and for any point $x\in \mathrm{N}$ the exponential map $\mathrm{exp}_x$ is a diffeomorphism from some closed neighborhood of $0$ in $T_x\mathrm{N}$ onto $\mathrm{N}$.

A (smooth) connected compact manifold $(\mathrm{N},g)$ with boundary is called  admissible if it is conformal to a submanifold with boundary of $\mathbb{R}\times (\mathrm{N}',g')$ where $(\mathrm{N}',g')$ is a simple $n-1$ dimensional manifold.

We refer to \cite{DKSU} for more detailed definitions, and we assume in the rest of this section that $(\mathrm{M}_0,g)$ is admissible.

It is worth noticing that \eqref{2.4.1} still holds for fixed $q_j\in L^\infty (\mathrm{M},\mathbb{R})$, $j=1,2$ and $\lambda \in \mathbb{\rho}(A_{q_1})\cap \mathbb{\rho}(A_{q_2})$, where the constants $C$, $c$, $\beta$ and $\theta$ may also depend on $q_j$. Then, in light of \cite[Subsection 6.1]{DKSU}, we easily derive from  \eqref{2.4.1} the following result

\begin{theorem}\label{theorem6.1}
Let $q_1$, $q_2\in L^\infty (\mathrm{M},\mathbb{R})$  satisfying $q_1=q_2$ in $\mathrm{M}_1$, and let $\lambda \in \mathbb{\rho}(A_{q_1})\cap \mathbb{\rho}(A_{q_2})$. If $\Lambda_{q_1,\lambda}^0=\Lambda_{q_2,\lambda}^0$ then $q_1=q_2$.
\end{theorem}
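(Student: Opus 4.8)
The plan is to reduce the uniqueness statement to the known completeness of products of complex geometric optic (CGO) solutions on admissible manifolds. The starting point is the preliminary inequality \eqref{2.4.1}, which—as the paragraph preceding the theorem notes—continues to hold for arbitrary fixed $q_1,q_2\in L^\infty(\mathrm{M},\mathbb{R})$ and any $\lambda\in\rho(A_{q_1})\cap\rho(A_{q_2})$, at the cost of letting $C$, $c$, $\beta$, $\theta$ depend on $q_1,q_2$. Under the hypothesis $\Lambda_{q_1,\lambda}^0=\Lambda_{q_2,\lambda}^0$, the second term on the right of \eqref{2.4.1} vanishes identically, so for every $0<\epsilon<1$ and all $u_j\in\mathscr{S}_{q_j,\lambda}^0$ one obtains
\[
C\mathbf{n}_\lambda^{-1}\left|\int_{\mathrm{M}_0}(q_1-q_2)u_1u_2\,d\mu\right|\le \epsilon^\beta\|u_1\|_{H^2(\mathrm{M}_0)}\|u_2\|_{H^2(\mathrm{M}_0)}.
\]
Letting $\epsilon\to 0$ forces the left-hand side to zero, which yields the orthogonality relation
\[
\int_{\mathrm{M}_0}(q_1-q_2)u_1u_2\,d\mu=0\quad\text{for all }u_1\in\mathscr{S}_{q_1,\lambda}^0,\ u_2\in\mathscr{S}_{q_2,\lambda}^0.
\]

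Next I would exploit the admissibility of $(\mathrm{M}_0,g)$. Since $\mathrm{M}_0$ is conformal to a submanifold of $\mathbb{R}\times(\mathrm{N}',g')$ with $(\mathrm{N}',g')$ simple, the construction of \cite[Subsection 6.1]{DKSU} supplies, for the Schr\"odinger operators $\Delta+\lambda-q_j$ on $\mathrm{M}_0$, large families of CGO solutions $u_j\in\mathscr{S}_{q_j,\lambda}^0$ whose products $u_1u_2$ concentrate, after integration against the attenuated geodesic ray transform on $(\mathrm{N}',g')$, so as to recover the Fourier coefficients of $q_1-q_2$ in the Euclidean direction together with its geodesic ray transform in the transversal variable. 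The simplicity of $(\mathrm{N}',g')$ guarantees the injectivity of that transform. Feeding such products into the orthogonality relation and passing to the limit in the CGO parameter therefore shows that the attenuated ray transform of $q_1-q_2$ vanishes, and injectivity then gives $q_1-q_2=0$ on $\mathrm{M}_0$. Combined with the standing hypothesis $q_1=q_2$ on $\mathrm{M}_1$, this yields $q_1=q_2$ throughout $\mathrm{M}$.

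The main obstacle is verifying that \eqref{2.4.1} does persist for the fixed-potential, $\lambda$-dependent setting in which the CGO solutions live, and more precisely that the solution classes $\mathscr{S}_{q_j,\lambda}^0$ are rich enough to contain the DKSU-type CGO solutions: one must check that these solutions, which satisfy $(\Delta+\lambda-q_j)u_j=0$ in $\mathrm{M}_0$, indeed belong to $H^2(\mathrm{M}_0)$ so that the $H^2$-norms on the right of \eqref{2.4.1} are finite and the $\epsilon\to 0$ argument applies. Once that qualitative membership is in hand the quantitative constants are irrelevant, since only the vanishing of the bilinear form is used. The remaining density/injectivity argument is then a direct citation of \cite[Subsection 6.1]{DKSU}, so the entire proof is short: the work is in justifying that the preliminary inequality may be read in the zero-data regime and that the CGO families are admissible test functions.
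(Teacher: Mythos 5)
Your proposal is correct and follows essentially the same route as the paper: the paper likewise observes that \eqref{2.4.1} persists for fixed $q_1,q_2$ and $\lambda\in\rho(A_{q_1})\cap\rho(A_{q_2})$, sends $\epsilon\to 0$ under the hypothesis $\Lambda_{q_1,\lambda}^0=\Lambda_{q_2,\lambda}^0$ to obtain the orthogonality relation $\int_{\mathrm{M}_0}(q_1-q_2)u_1u_2\,d\mu=0$ on $\mathscr{S}_{q_1,\lambda}^0\times\mathscr{S}_{q_2,\lambda}^0$, and then concludes by the completeness of products of solutions on the admissible manifold $(\mathrm{M}_0,g)$ from \cite[Subsection 6.1]{DKSU} together with $q_1=q_2$ on $\mathrm{M}_1$. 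Your additional remarks on the $H^2$ membership of the CGO families only make explicit what the paper leaves implicit.
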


Next, define
\[
\mathfrak{C}=\{ \mathfrak{c} \in C^\infty(\mathrm{M},\mathbb{R});\; \mathfrak{c} >0 \}.
\]

For $\mathfrak{c} \in \mathfrak{C}$, consider the partial Dirichlet-to-Neumann map $\Pi_\mathfrak{c}$ given as follows
\[
\Pi_\mathfrak{c}: \varphi\in H_\Gamma^{3/2}(\partial \mathrm{M})\mapsto \partial_\nu u_\mathfrak{c}(\varphi)_{|\Sigma},
\]
where $u_\mathfrak{c}(\varphi)\in H^2(\mathrm{M})$ is the unique solution of the BVP
\[
\Delta_{\mathfrak{c} g}u=0\; \mathrm{in}\; \mathrm{M},\quad u_{|\partial \mathrm{M}}=\varphi.
\]

Let $\mathbf{1}$ denotes the function identically equal to $1$ in $\mathrm{M}$. Pick $\mathfrak{c}\in \mathfrak{C}$ satisfying $\Pi_{\mathfrak{c}}=\Pi_{\mathbf{1}}$ and $\mathfrak{c}=1$ in $\mathrm{M}_1$. If $q_\mathfrak{c} =\Delta_g(\mathfrak{c}^{(n-2)/4})/\mathfrak{c}^{(n+2)/4}$ then we obtain from the computations in \cite[ proof of Theorem 1.8]{DKSU} that
\[
\Lambda_{q_\mathfrak{c},0}^0=\Lambda_{0,0}^0.
\]
Hence $q_\mathfrak{c}=0$ by Theorem \ref{theorem6.1}. Therefore, $w=\mathfrak{c}^{(n-2)/4}$ is the solution of the BVP
\[
\Delta_gw=0\; \mathrm{in}\; \mathrm{M},\quad w_{|\partial \mathrm{M}}=1.
\]
By uniqueness of solutions of this BVP, we have $w=\mathbf{1}$ and then $\mathfrak{c}=\mathbf{1}$.

In other words, we proved the following result.

\begin{corollary}\label{corollary6.2.0}
Let $\mathfrak{c}\in \mathfrak{C}$ satisfying $\mathfrak{c}=\mathbf{1}$ in $\mathrm{M}_1$ and $\Pi_{\mathfrak{c}}=\Pi_{\mathbf{1}}$. Then $\mathfrak{c}=\mathbf{1}$.
\end{corollary}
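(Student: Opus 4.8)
The plan is to reduce the conformal (conductivity) problem to the potential problem already resolved in Theorem \ref{theorem6.1}, by means of the classical Liouville-type substitution. First I would record the conformal transformation law for the Laplace--Beltrami operator: with $w=\mathfrak{c}^{(n-2)/4}$, a function $u$ solves $\Delta_{\mathfrak{c}g}u=0$ in $\mathrm{M}$ precisely when $v=wu$ solves $(-\Delta_g+q_\mathfrak{c})v=0$ in $\mathrm{M}$, where $q_\mathfrak{c}=\Delta_g(\mathfrak{c}^{(n-2)/4})/\mathfrak{c}^{(n+2)/4}$. This is the computation carried out in \cite[proof of Theorem 1.8]{DKSU}; it turns the conductivity equation into a Schr\"odinger equation with spectral parameter $\lambda=0$.

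Next I would translate the hypothesis $\Pi_\mathfrak{c}=\Pi_\mathbf{1}$ into the identity $\Lambda_{q_\mathfrak{c},0}^0=\Lambda_{0,0}^0$ between the partial Dirichlet-to-Neumann maps of the associated Schr\"odinger operators. The decisive point is that $\mathfrak{c}=\mathbf{1}$ on $\mathrm{M}_1$, and $\mathrm{M}_1$ contains a neighborhood of $\partial\mathrm{M}$ (because $\mathrm{M}_0\subset\mathrm{Int}(\mathrm{M})$); hence $w=\mathfrak{c}^{(n-2)/4}$ equals $1$ with vanishing normal derivative along $\partial\mathrm{M}$. Consequently the boundary substitution $v=wu$ acts as the identity on the Dirichlet trace $u_{|\partial\mathrm{M}}$ and on the Neumann trace $\partial_\nu u_{|\Sigma}$, and it preserves the support condition on $\overline{\Gamma}$, so that the equality of conformal boundary maps is equivalent to $\Lambda_{q_\mathfrak{c},0}^0=\Lambda_{0,0}^0$.

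With this identification, I would apply Theorem \ref{theorem6.1} to the pair $q_1=q_\mathfrak{c}$ and $q_2=0$. These agree on $\mathrm{M}_1$ because $\mathfrak{c}\equiv 1$ there makes $w$ constant, whence $q_\mathfrak{c}=\Delta_g w/\mathfrak{c}^{(n+2)/4}=0$ on $\mathrm{M}_1$, and $0$ lies in $\rho(A_{q_\mathfrak{c}})\cap\rho(A_0)$ by well-posedness of the underlying boundary value problems. The theorem then forces $q_\mathfrak{c}=0$. The conclusion is finally a uniqueness statement for the Dirichlet problem: from $q_\mathfrak{c}=0$ the function $w=\mathfrak{c}^{(n-2)/4}$ is $g$-harmonic in $\mathrm{M}$ with boundary value $1$, so $w\equiv\mathbf{1}$ by uniqueness of the harmonic extension, whence $\mathfrak{c}\equiv\mathbf{1}$.

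The main obstacle is the second step, namely verifying that the two families of partial boundary maps genuinely coincide under the Liouville transformation. One must check that the boundary normalization $\mathfrak{c}=\mathbf{1}$ near $\partial\mathrm{M}$ suppresses the boundary contributions that otherwise accompany the transformation of the normal derivative, and that the localizations on $\Gamma$ and $\Sigma$ are respected. Once the two boundary maps are matched, the remaining steps are immediate consequences of Theorem \ref{theorem6.1} and standard elliptic uniqueness.
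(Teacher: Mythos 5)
Your proposal is correct and follows essentially the same route as the paper: the Liouville substitution $w=\mathfrak{c}^{(n-2)/4}$ reducing $\Pi_{\mathfrak{c}}=\Pi_{\mathbf{1}}$ to $\Lambda_{q_\mathfrak{c},0}^0=\Lambda_{0,0}^0$, an application of Theorem \ref{theorem6.1} with $q_1=q_\mathfrak{c}$, $q_2=0$ (which agree on $\mathrm{M}_1$), and uniqueness of the harmonic extension of the boundary value $1$. Your additional care about the boundary normalization and about $0\in\rho(A_{q_\mathfrak{c}})\cap\rho(A_0)$ only makes explicit what the paper leaves implicit.
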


\section{Proof of Theorems \ref{mt} and \ref{mt1}}\label{section6}

In this section $\mathrm{M}$ is $C^{1,1}$ bounded domain of $\mathbb{R}^n$, $n\ge 3$, endowed with the round metric. We shall use the complex geometric optics solutions given by the following proposition.

\begin{proposition}\label{proposition3.1}
Let $q\in \mathscr{Q}_\lambda$. There exists a constant $\varpi\ge 1$ only depending on $n$, $\mathrm{M}_0$ and $\kappa$ such that, for each $\xi \in \mathbb{C}^n$ satisfying $\xi\cdot\xi=\lambda$ and $|\Im \xi|>2\varpi$, we find $u_\xi=e^{-ix\cdot \xi}(1+w_\xi)\in \mathscr{S}_{q,\lambda}^0$ and the following inequalities hold
\begin{align}
&\|w_\xi\|_{L^2(\mathrm{M}_0)}\le C|\Im \xi|^{-1}, \label{3.1}
\\
&\|u_\xi\|_{H^2(\mathrm{M}_0)}\le C\lambda e^{\varkappa |\Im \xi|},\label{3.2}
\end{align}
where the constant $C>0$  only depends on $n$, $\mathrm{M}_0$, $\lambda_0$ and $\kappa$ and the constant $\varkappa >0$ only depends on $n$, $\mathrm{M}_0$.
\end{proposition}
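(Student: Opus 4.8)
The plan is to construct complex geometric optics (CGO) solutions to the Schr\"odinger equation $(\Delta+\lambda-q)u=0$ on the admissible manifold $\mathrm{M}_0$ by the standard Sylvester--Uhlmann / Calder\'on ansatz, tracking explicitly the dependence of all constants on $\lambda$ and on $|\Im\xi|$. We seek $u_\xi=e^{-ix\cdot\xi}(1+w_\xi)$ with $\xi\cdot\xi=\lambda$. Substituting into the equation and using that $e^{-ix\cdot\xi}$ solves the free equation $(\Delta+\lambda)e^{-ix\cdot\xi}=0$ (precisely because $\xi\cdot\xi=\lambda$), one reduces the problem for $u_\xi$ to a conjugated equation for the correction term $w_\xi$, namely
\[
(\Delta-2i\xi\cdot\nabla-q)w_\xi=q \quad\text{in }\mathrm{M}_0,
\]
after dividing out the exponential factor. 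The key analytic tool is the existence of a bounded solution operator (a right inverse) $G_\xi$ for the conjugated Laplacian $\Delta-2i\xi\cdot\nabla$ satisfying a gain estimate $\|G_\xi f\|_{L^2}\le C|\Im\xi|^{-1}\|f\|_{L^2}$ on $\mathrm{M}_0$; this is exactly the estimate furnished by the CGO machinery for admissible manifolds.

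First I would record the Faddeev-type resolvent estimate for $\Delta-2i\xi\cdot\nabla$ on $\mathrm{M}_0$: there is a constant $\varpi\ge 1$, depending only on $n$, $\mathrm{M}_0$ and $\kappa$, such that for $|\Im\xi|>2\varpi$ the Neumann-type series $w_\xi=-(\mathrm{Id}+G_\xi q)^{-1}G_\xi q$ converges in $L^2(\mathrm{M}_0)$. The smallness condition $|\Im\xi|>2\varpi$ is precisely what makes $\|G_\xi q\|_{\mathscr{B}(L^2)}\le C\kappa|\Im\xi|^{-1}<1/2$, so that $(\mathrm{Id}+G_\xi q)^{-1}$ exists by a geometric series and is bounded by $2$. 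This yields at once the first estimate $\|w_\xi\|_{L^2(\mathrm{M}_0)}\le C|\Im\xi|^{-1}$, with $C$ depending only on $n$, $\mathrm{M}_0$, $\lambda_0$ and $\kappa$, since $\|q\|_{L^\infty}\le\kappa$ on $\mathscr{Q}_\lambda$. That $u_\xi\in\mathscr{S}^0_{q,\lambda}$ is then immediate: by construction $u_\xi$ solves the equation, and elliptic regularity upgrades the $L^2$ correction to $H^2(\mathrm{M}_0)$.

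Second I would derive the $H^2$ bound \eqref{3.2}. The pointwise factor $e^{-ix\cdot\xi}$ contributes an exponential $e^{\varkappa|\Im\xi|}$ through its sup-norm and the sup-norms of its first two derivatives on the compact set $\mathrm{M}_0$ (each differentiation brings down a factor $|\xi|$, but $|\xi|$ is controlled polynomially in $\lambda$ and $|\Im\xi|$, and is absorbed into the exponential and the explicit $\lambda$ power). For the $H^2$-norm of $1+w_\xi$ one goes back to the elliptic equation for $w_\xi$: the interior $H^2$ estimate for $\Delta-2i\xi\cdot\nabla-q$ gives control of $\|w_\xi\|_{H^2(\mathrm{M}_0)}$ in terms of $\|w_\xi\|_{L^2(\mathrm{M}_0)}$ and $\|q(1+w_\xi)\|_{L^2(\mathrm{M}_0)}$, again with constants polynomial in $|\xi|$ and hence in $\lambda$; combining with \eqref{3.1} and collecting the factor $\lambda$ separately produces the stated $C\lambda e^{\varkappa|\Im\xi|}$.

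The main obstacle is the uniform dependence on the two parameters $\lambda$ and $|\Im\xi|$ \emph{simultaneously}. The delicate point is to ensure that the threshold $\varpi$ and the gain constant in the Faddeev estimate depend only on $n$, $\mathrm{M}_0$ and $\kappa$ and \emph{not} on $\lambda$ — this requires that the constraint $\xi\cdot\xi=\lambda$ be handled so that $\lambda$ enters only through the real part of $\xi$ while the smoothing is governed purely by $\Im\xi$, so the resolvent estimate for $\Delta-2i\xi\cdot\nabla$ remains $\lambda$-uniform. Once that separation is secured, the polynomial $\lambda$ factors and the exponential $e^{\varkappa|\Im\xi|}$ emerge from the elementary estimates of $e^{-ix\cdot\xi}$ and its derivatives, and assembling \eqref{3.1}--\eqref{3.2} is routine.
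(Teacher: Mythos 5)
Your proposal is correct and follows essentially the same route as the paper: the paper's entire proof consists of observing that, because $\xi\cdot\xi=\lambda$, the remainder $w_\xi$ satisfies the same conjugated (Faddeev-type) equation as in the case $\lambda=0$, and then invoking the construction of \cite[Proposition 2.6, Lemme 2.15]{Ch2} — precisely the Sylvester--Uhlmann/Neumann-series argument with the $\lambda$-uniform gain $\|G_\xi\|\le C|\Im\xi|^{-1}$ that you spell out, including the correct identification of the one delicate point (the threshold $\varpi$ must not depend on $\lambda$). Only cosmetic remark: in Section \ref{section6} the set $\mathrm{M}_0$ sits in a Euclidean domain with the round metric (admissibility is used only in Section \ref{section5}), which is what makes the ansatz $e^{-ix\cdot\xi}$ legitimate.
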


Note that $w_\xi$ in Proposition \ref{proposition3.1} must satisfy the same equation as in the case $\lambda=0$ (see \cite[(2.5)]{Ch2}). Therefore the proof of Proposition \ref{proposition3.1} is obtained by modifying slightly  those of \cite[Proposition 2.6]{Ch2} and \cite[Lemme 2.15]{Ch2}.

Pick $\eta \in \mathbb{R}^n$ and choose $\eta_1,\eta_2\in \mathbb{R}^n\setminus\{0\}$ in such a way that $\eta_1\bot\eta_2$, $\eta_j\bot \eta$, $j=1,2$, $|\eta_1|^2>4\varpi^2+\lambda $ and
\[
|\eta_2|^2=|\eta |^2/4+|\eta_1|^2-\lambda .
\]
Here and henceforth the constant $\varpi$ is as in Proposition \ref{proposition3.1}. Then set
\[
\xi_1=(\eta /2+\eta_1)+i\eta_2,\quad \xi_2=(\eta /2-\eta_1)-i\eta_2.
\]
We check that we have
\begin{equation}\label{3.3}
|\Im \xi_j|=|\eta_2|>2\varpi,\quad \xi_j\cdot \xi_j=\lambda,\quad j=1,2,\quad \xi_1+\xi_2=\eta.
\end{equation}

In the sequel we assume that $|\eta_1|^2=\lambda+\tau^2$, where $\tau >2\varpi$. In this case we have $|\Im \xi_j|\ge \tau>2\varpi$ and $|\Im \xi_j|\le \tau +|\eta |/2$, $j=1,2$.

Let $q_j\in \mathscr{Q}_\lambda$, $j=1,2$, so that $q_1=q_2$ in $\mathrm{M}_1$. Set $u_j=u_{\xi_j}$ and $w_j=w_{\xi_j}$, $j=1,2$, where $u_{\xi_j}$ and $w_{\xi_j}$ are as in Proposition \ref{proposition3.1} when $q=q_j$.

Define $q=q_1-q_2$ extended by $0$ outside $\mathrm{M}$. Since
\[
u_1u_2=e^{-i\eta\cdot x}+\varrho,\quad \varrho=e^{-i\eta\cdot x}(w_1+w_2+w_1w_2),
\]
we find
\[
\hat{q}(\eta)=\int_{\mathrm{M}_0}qu_1u_2dx-\int_{\mathrm{M}_0}q\varrho dx,
\]
where $\hat{q}$ denotes the Fourier transform of $q$. Hence
\begin{equation}\label{3.4}
|\hat{q}(\eta)|\le \left| \int_{\mathrm{M}_0}qu_1u_2dx\right|+C_0\tau^{-1}
\end{equation}
by \eqref{3.1}, where the constant $C_0>0$ only depends on $n$, $\mathrm{M}_0$ and $\kappa$.

On the other hand, it follows from \eqref{3.2} that
\begin{equation}\label{3.5}
\|u_1\|_{H^2(\mathrm{M}_0)}\|u_2\|_{H^2(\mathrm{M}_0)}\le C_1\lambda^2 e^{\varkappa (|\eta|+2\tau)},
\end{equation}
where the constant $C_1>0$ only depends on $n$, $\mathrm{M}_0$, $\lambda_0$ and $\kappa$, and $2\varkappa$ was substituted by $\varkappa$.

\subsection{Proof of Theorem \ref{mt}}

Using inequalities \eqref{3.4} and \eqref{3.5} in \eqref{2.4.1}, and the fact that $\mathfrak{m}_\lambda=\lambda^2\mathfrak{n}_\lambda$, we obtain
\[
C\mathbf{m}_\lambda^{-1}|\hat{q}(\eta)|\le \tau^{-1}+\epsilon^\beta e^{\varkappa (|\eta|+2\tau)} +e^{c/\epsilon}e^{\varkappa (|\eta|+2\tau)}  \| \Lambda_{q_1,\lambda}^0-\Lambda_{q_2,\lambda}^0\|^\theta .
\]
Here and henceforth, $C>0$ denotes a generic constant only depending on $\mathrm{M}$, $\mathrm{M}_0$, $\Gamma$, $\Sigma$, $\lambda_0$ and $\kappa$. While $c$, $\beta$ and $\theta$ are as in \eqref{2.4.1}, and $\varkappa$ is the constant appearing in \eqref{3.5}.

Pick $s>0$. Therefore, we have for $|\eta|\le s$
\[
C\mathbf{m}_\lambda^{-1}|\hat{q}(\eta)|\le \tau^{-1}+\epsilon^\beta e^{\varkappa (s+2\tau)} + e^{c/\epsilon}e^{\varkappa (s+2\tau)}  \| \Lambda_{q_1,\lambda}^0-\Lambda_{q_2,\lambda}^0\|^\theta .
\]

Next, upon substituting $2c$ by $c$, $2\beta$ by $\beta$, $4\varkappa$ by $\varkappa$ and $2\theta$ by $\theta$, we get
\begin{align}
&C\mathbf{m}_\lambda^{-2}\int_{B(0,s)}|\hat{q}(\eta)|^2d\eta \label{3.6}
\\
&\hskip 2cm \le s^n\tau^{-2}+s^n\epsilon^\beta e^{\varkappa (s+\tau)} + s^ne^{c/\epsilon}e^{\varkappa (s+\tau)}  \| \Lambda_{q_1,\lambda}^0-\Lambda_{q_2,\lambda}^0\|^\theta.\nonumber
\end{align}

On the other hand, we have
\begin{equation}\label{3.7}
\int_{|\eta|\ge s}(1+|\eta|^2)^{-1}|\hat{q}(\eta)|^2d\eta \le s^{-2}\|q\|_{L^2(\mathbb{R}^n)}\le C's^{-2},
\end{equation}
where the constant $C'>0$ only depends on $\kappa$ and $|\mathrm{M}|$.

Putting together \eqref{3.6} and \eqref{3.7}, we obtain
\begin{align*}
&C\mathbf{m}_\lambda^{-2}\|q\|_{H^{-1}(\mathbb{R}^n)}^2 
\\
&\hskip 1.5cm \le s^{-2}+s^n\tau^{-2}+\epsilon^\beta s^n e^{\varkappa (s+\tau)} + s^ne^{c/\epsilon}e^{\varkappa (s+\tau)}  \| \Lambda_{q_1,\lambda}^0-\Lambda_{q_2,\lambda}^0\|^\theta.
\end{align*}
Taking in this inequality $s=\tau^{2/(n+2)}\le \tau$, we derive, upon substituting $2\varkappa$ by $\varkappa$,
\[
C\mathbf{m}_\lambda^{-2}\|q\|_{H^{-1}(\mathbb{R}^n)}^2
\le \tau^{-4/(n+2)}+\epsilon^\beta  e^{\varkappa \tau} + e^{c/\epsilon}e^{\varkappa \tau}  \| \Lambda_{q_1,\lambda}^0-\Lambda_{q_2,\lambda}^0\|^\theta.
\]
Let $\tau_0$ be sufficiently large in such a way that $\tau_0^{-4/(n+2)}e^{-\varkappa\tau_0}<1$. In this case for each $\tau > \tau_0$, there exists $0<\epsilon <1$ so that $\epsilon^\beta=\tau^{-4/(n+2)}e^{-\varkappa\tau}$. Whence, for $\tau>\tau_1=\max(2\varpi,\tau_0)$, taking $\epsilon=[\tau^{-4/(n+2)}e^{-\varkappa\tau}]^{1/\beta}$ in the preceding inequality, we find, upon modifying the constants above,
\[
C\mathbf{m}_\lambda^{-2}\|q\|_{H^{-1}(\mathbb{R}^n)}^2
\le \tau^{-4/(n+2)}+ e^{e^{\varkappa \tau}}  \| \Lambda_{q_1,\lambda}^0-\Lambda_{q_2,\lambda}^0\|^\theta.
\]
Hence, modifying again the different constants, we get an estimate of the form
\[
C\mathbf{m}_\lambda^{-1}\|q\|_{H^{-1}(\mathbb{R}^n)}
\le \tau^{-2/(n+2)}+ e^{e^{\varkappa \tau} }\| \Lambda_{q_1,\lambda}^0-\Lambda_{q_2,\lambda}^0\|^\theta.
\]
Let $\Upsilon_0$ be given by $\Upsilon_0^{-\theta}=\tau_1^{2/(n+2)}e^{e^{\varkappa \tau_1}}$ and assume that $0<\| \Lambda_{q_1,\lambda}^0-\Lambda_{q_2,\lambda}^0\|<\Upsilon_0$. Taking $\tau>\tau_1$ is the last inequality so that $\tau^{-2/(n+2)}= e^{\varkappa \tau}  \| \Lambda_{q_1,\lambda}^0-\Lambda_{q_2,\lambda}^0\|^\theta$, we obtain
\[
\|q_1-q_2\|_{H^{-1}(\mathrm{M})}\le C \mathbf{m}_\lambda \left|\ln \ln \| \Lambda_{q_1,\lambda}^0-\Lambda_{q_2,\lambda}^0\|\right|^{-2/(n+2)}.
\]
The proof of Theorem \ref{mt} is then complete.

\subsection{Proof of Theorem \ref{mt1}}

In this subsection $C>0$ still denotes a generic constant only depending on $n$, $\mathrm{M}$, $\mathrm{M}_0$, $\Gamma$, $\Sigma$, $\lambda_0$ and $\kappa$.

For $q\in \mathscr{Q}_\lambda$, set
\[
\mathscr{S}_{q,\lambda}=\{ u\in H^2(\mathrm{M});\; (\Delta +\lambda -q)u=0\}.
\]

Pick $q_1,q_2\in \mathscr{Q}_\lambda$ so that $q_1=q_2$ in $\mathrm{M}_1$. Let $u_j\in \mathscr{S}_{q_j,\lambda}$, $j=1,2$, be as above, where $\mathrm{M}_0$ is substituted by $\mathrm{M}$. Let $\tilde{u}_2\in H^2(M)$ be the solution of the BVP
\[
(\Delta +\lambda -q_1)\tilde{u}_2=0\; \mathrm{in}\; \mathrm{M},\quad \tilde{u}_2{_{|\partial M}}= u_2{_{|\partial M}},
\]
and $u=u_2-\tilde{u}_2=R_{q_1}(\lambda)((q_1-q_2)u_2)$.

Fix $\psi \in C_0^\infty (\mathrm{M}'_0)$ so that $\psi=1$ in a neighborhood of $\mathrm{M}_0$ and let $w=\psi u$. Then we have
\[
(\Delta +\lambda -q_1)w= \psi (q_2-q_1)u_2+[\Delta,\psi]u.
\]
Hence, Green's formula yields
\[
0=\int_M\psi(q_2-q_1)u_2u_1dx+\int_M[\Delta,\psi]uu_1dx.
\]
Taking into account that $\mathrm{supp}([\Delta,\psi]u)\subset U\Subset \mathrm{Int}(\mathrm{M}_1)$, we get
\[
\left|\int_M(q_2-q_1)u_2u_1dx\right|\le C_0\|u\|_{H^1(U)}\|u_1\|_{L^2(\mathrm{M})},
\]
where the constant $C_0>0$ only depend on $n$, $\mathrm{M}$, $\mathrm{M}_0$ and $\mathrm{M}'_0$.

Let $\rho>0$. Using Theorem \ref{theorem1.0}, in which $\mathrm{M}$ is substituted by $\mathrm{M}_1$, and Lemma \ref{lemma1}, we get
\[
\left|\int_M(q_2-q_1)u_2u_1dx\right|\le C\tilde{\mathbf{n}}_\lambda\left[\rho^p\|u_2\|_{L^2(\mathrm{M})}+ \rho^{-1}\| \partial_\nu u\|_{L^2(\Sigma)}\right]\|u_1\|_{L^2(\mathrm{M})},
\]
where $p$ is as in Theorem \ref{theorem1.0} and 
\[
\tilde{\mathbf{n}}_\lambda=\lambda \mathbf{b}_\lambda \mathbf{e}_\lambda=\lambda^{-1}\tilde{\mathbf{m}}_\lambda.
\]

Since
\begin{align*}
\| \partial_\nu u\|_{L^2(\Sigma)}&=\| (\Lambda_{q_1,\lambda}^1-\Lambda_{q_2,\lambda}^1)(u_2{_{|\partial M}})\|_{L^2(\Sigma)}
\\
&\le \| \Lambda_{q_1,\lambda}^1-\Lambda_{q_2,\lambda}^1\| \|u_2\|_{H^{3/2}(\partial \mathrm{M})},
\end{align*}
we derive
\[
\left|\int_M(q_2-q_1)u_2u_1dx\right|
 \le C\tilde{\mathbf{n}}_\lambda \left[\rho^p+\rho^{-1}  \| \Lambda_{q_1,\lambda}^1-\Lambda_{q_2,\lambda}^1\|\right]\|u_1\|_{L^2(\mathrm{M})}\|u_2\|_{H^2(\mathrm{M}_0)}.
\]

Taking in this inequality $\rho=\| \Lambda_{q_1}^1-\Lambda_{q_2}^1\|^{1/(p+1)}$, we get
\[
\left|\int_M(q_2-q_1)u_2u_1dx\right|\le C\tilde{\mathbf{n}}_\lambda \| \Lambda_{q_1,\lambda}^1-\Lambda_{q_2,\lambda}^1\|^\theta \|u_1\|_{L^2(\mathrm{M})}\|u_2\|_{H^2(\mathrm{M})},
\]
where $\theta=p/(p+1)$.

Proceeding similarly to the proof of Theorem \ref{mt}, we derive
\[
C\tilde{\mathbf{m}}_\lambda^{-1}\|q\|_{H^{-1}(\mathbb{R}^n)}
\le \tau^{-2/(n+2)}+ e^{\varkappa \tau}  \| \Lambda_{q_1,\lambda}^1-\Lambda_{q_2,\lambda}^1\|^\theta.
\]
Let $\Upsilon_1$ be given by $\Upsilon_1^{-\theta}=\tau_\ast^{2/(n+2)}e^{\varkappa \tau_\ast}$, where $\tau_\ast=2\varpi$. When $0<\| \Lambda_{q_1,\lambda}^1-\Lambda_{q_2,\lambda}^1\|<\Upsilon_1$, taking $\tau>\tau_\ast$ in the last inequality so that $\tau^{-2/(n+2)}= e^{\varkappa \tau}  \| \Lambda_{q_1,\lambda}-\Lambda_{q_2,\lambda}\|^\theta$, we find
\[
\|q_1-q_2\|_{H^{-1}(\mathrm{M})}\le C \tilde{\mathbf{m}}_\lambda\left|\ln \| \Lambda_{q_1,\lambda}^1-\Lambda_{q_2,\lambda}^1\|\right|^{-2/(n+2)}.
\]
This is the expected inequality.

\section{The interior impedance problem}\label{section7}

In this section we explain the main\linebreak modifications we bring to the proofs we carried out in the case of Dirichlet boundary condition to obtain a variant of Theorems \ref{mt} and \ref{mt1} in the case of an impedance boundary condition. A similar inverse problem to the one we consider in this section was already studied in \cite{KU}.

Since our results use \cite[Theorem 1.8]{BSW}, we assume in this section that $\mathrm{M}$ is a $C^\infty$ bounded domain of $\mathbb{R}^n$. Fix $a\in C^\infty (\partial \mathrm{M},\mathbb{R})$ such that $\pm a>0$ and consider the BVP
\begin{equation}\label{ip1}
(\Delta +\mu -q)u=f\; \mathrm{in}\; M,\quad (\partial_\nu u\mp i\sqrt{\mu}\, a)u=\varphi\; \mathrm{on}\; \partial M,
\end{equation}
where $\mu>0$, $q\in L^\infty(\mathrm{M},\mathbb{R})$, $f\in L^2(M)$ and $\varphi \in L^2(\partial \mathrm{M})$.

We proceed as in the proof of \cite[Proposition A1]{KU} to prove that the BVP \eqref{ip1} has a unique solution $u=u^\pm_{q,\mu}(f,\varphi )\in H^1(\mathrm{M})$. Furthermore, according to \cite[Theorem 1.8]{BSW}, we have
\begin{equation}\label{ip2}
\|\nabla u\|_{L^2(M)}+\sqrt{\mu} \|u\|_{L^2(\mathrm{M})}\le C_0\left(\|qu+f\|_{L^2(\mathrm{M})}+\|\varphi\|_{L^2(\partial \mathrm{M})} \right),
\end{equation}
where the constant $C_0$ only depends on $n$, $\mathrm{M}$ and $a$.

Let $\kappa >0$ be fixed and assume that $\|q\|_{L^\infty(\mathrm{M})}\le \kappa$. If $\mu \ge \mu_0:=4\kappa^2$ then \eqref{ip2} implies
\begin{equation}\label{ip3}
\|\nabla u\|_{L^2(M)}+\sqrt{\mu} \|u\|_{L^2(\mathrm{M})}\le C_1\left(\|f\|_{L^2(\mathrm{M})}+\|\varphi\|_{L^2(\partial \mathrm{M})} \right).
\end{equation}
Here and henceforth, constant $C_1>0$ is a generic constant only depending on $n$, $\mathrm{M}$, $\kappa$ and $a$.

Next, assume that $\varphi \in H^{1/2}(\partial \mathrm{M})$. We write $u=v+iw$ and we verify that $v$  is the solution of the BVP
\[
(\Delta +\mu -q)v=\Re f\; \mathrm{in}\; M,\quad \partial_\nu v=\mp a\sqrt{\lambda}\, w+\Re \varphi.
\]
It follows from \cite[Theorem 2.4.2.7]{Gr} that $v\in H^2(\mathrm{M})$ and \cite[Theorem 2.3.3.6]{Gr} yields
\begin{align}
\|v\|_{H^2(\mathrm{M})}&\le C_1\left(\mu\|u\|_{L^2(\mathrm{M})}+ \|f\|_{L^2(\mathrm{M})}+\|\mp a\sqrt{\lambda}\, w+\varphi\|_{H^{1/2}(\partial \mathrm{M})} \right)\label{ip4}
\\
&\le C_1\left(\mu\|u\|_{L^2(\mathrm{M})}+ \|f\|_{L^2(\mathrm{M})}+\sqrt{\lambda}\, \|w\|_{H^1(\mathrm{M})} +\| \varphi\|_{H^{1/2}(\partial \mathrm{M})} \right).\nonumber
\end{align}

We have similarly $w\in H^2(\mathrm{M})$ and the following inequality holds
\begin{equation}\label{ip5}
\|w\|_{H^2(\mathrm{M})}\le C_1\left(\mu\|u\|_{L^2(\mathrm{M})}+ \|f\|_{L^2(\mathrm{M})}+\sqrt{\lambda}\, \|v\|_{H^1(\mathrm{M})} +\| \varphi\|_{H^{1/2}(\partial \mathrm{M})} \right).
\end{equation}

Hence $u_{q,\mu}^\pm(f,\varphi)\in H^2(\mathrm{M})$ and putting together \eqref{ip3}, \eqref{ip4} and \eqref{ip5}, we obtain
\begin{equation}\label{ip6}
\|u^\pm_{q,\mu}(f,\varphi)\|_{H^2(\mathrm{M})}\le C_1\sqrt{\mu}\left( \|f\|_{L^2(\mathrm{M})}+\|\varphi\|_{H^{1/2}(\partial \mathrm{M})} \right).
\end{equation}

In the sequel $\mu \ge \mu_0$ is fixed and let
\[
\mathscr{Q}=\left\{ q\in L^\infty (\mathrm{M},\mathbb{R});\; \|q\|_{L^\infty(\mathrm{M})}\le \kappa\right\}.
\]

Define $H^1(\Sigma)$ as follows 
\[
H^1(\Sigma)=\{ f\in L^2(\Sigma);\; \nabla_\tau f\in L^2(\Sigma)\},
\]
where $\nabla_\tau f$ denotes the tangential gradient of $f$. We endow $H^1(\Sigma)$ with its natural norm
\[
\|f\|_{H^1(\Sigma)}=\left(\|f\|_{L^2(\Sigma)}^2+\|\nabla_\tau f\|_{L^2(\Sigma)}^2\right)^{1/2}.
\]
For further use, we observe that if $f\in H^1(\partial M)$, then $f_{|\Sigma}\in H^1(\Sigma)$ and 
\[
\|f_{|\Sigma}\|_{H^1(\Sigma)}\le \|f\|_{H^1(\partial \mathrm{M})},
\]
Here and henceforth, $H^1(\partial \mathrm{M})$ is endowed with the norm
\[
\|f\|_{H^1(\partial \mathrm{M})}=\left(\|f\|_{L^2(\partial \mathrm{M})}^2+\|\nabla_\tau f\|_{L^2(\partial \mathrm{M})}^2\right)^{1/2}.
\]

We associate to $q\in \mathscr{Q}$ the partial boundary operators $\mathscr{N}_q^0$ and $\mathscr{N}_q^1$ given respectively as follows
\begin{align*}
&\mathscr{N}_{q,\mu}^0:H_\Gamma^{1/2}(\partial \mathrm{M})\rightarrow H^1(\Sigma): \varphi \mapsto u^+_{q,\mu}(0,\varphi)_{|\Sigma},
\\
&\mathscr{N}_{q,\mu}^1:H^{1/2}(\partial \mathrm{M})\rightarrow H^1(\Sigma): \varphi \mapsto u^+_{q,\mu}(0,\varphi)_{|\Sigma}.
\end{align*}

It follows from \eqref{ip6} that
\[
\mathscr{N}_{q,\mu}^0\in \mathscr{B}(H_\Gamma^{1/2}(\partial \mathrm{M}),H^1(\Sigma))\quad \mathrm{and}\quad  \mathscr{N}_{q,\lambda}^1\in \mathscr{B}(H^{1/2}(\partial \mathrm{M}),H^1(\Sigma)).
\]

Define
\[
\mathscr{H}=\{u\in H^2(\mathrm{M});\; (\partial_\nu \mp i\sqrt{\mu}\, a)u_{|\Sigma}=0\}.
\]

Minor modifications of the proof of Theorem \ref{theorem1.0} enable us to obtain the following result
\begin{theorem}\label{theoremip1}
Let $U\Subset \mathrm{Int}(\mathrm{M})$. For any $q\in \mathscr{Q}$, $\epsilon >0$ and $u\in \mathscr{H}$, we have
\[
C\|u\|_{H^1(U)}\le \sqrt{\mu}\mathbf{b}_\mu\left[\epsilon ^p\|u\|_{H^1(\mathrm{M})}
+\epsilon^{-1} \left( \|(\Delta -q+\mu )u\|_{L^2(\mathrm{M})}+\|u\|_{H^1(\Sigma )}\right)\right],
\]
where the constant $C>0$ and $p>0$ only depend on $n$, $\mathrm{M}$, $\kappa$, $\Sigma$ and $U$.
\end{theorem}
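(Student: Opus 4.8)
The plan is to reproduce the dimension-lifting argument behind Theorem~\ref{theorem1.0}, replacing the Dirichlet hypothesis $u_{|\Sigma}=0$ by the impedance relation defining $\mathscr{H}$. On the cylinder $\mathrm{M}\times(0,1)$ I set $v(x,t)=e^{\sqrt{\mu}\,t}u(x)$. Since $\partial_t^2 v=\mu v$, one has $(\Delta+\partial_t^2-q)v=e^{\sqrt{\mu}\,t}(\Delta+\mu-q)u$, so the $\mu$-dependence of the equation is absorbed into the weight $e^{\sqrt{\mu}\,t}$ and $v$ is governed by the $\mu$-independent operator $\Delta+\partial_t^2-q$. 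The essential new feature compared to the Dirichlet case is that $v$ no longer vanishes on $\Sigma\times(0,1)$; differentiating in the normal direction and using $u\in\mathscr{H}$ gives instead the boundary relation $\partial_\nu v=\pm i\sqrt{\mu}\,a\,v$ on $\Sigma\times(0,1)$.

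Next I would apply the Carleman / quantitative unique continuation estimate of \cite[Propositions A.2 and A.3]{BC} to $v$, but in the form retaining the full Cauchy data on the observation region $\Sigma\times(1/4,3/4)$, whereas in the Dirichlet case only the Neumann trace survived. This gives an inequality of the shape
\begin{align*}
C\|v\|_{H^1(U\times(1/4,3/4))}&\le \epsilon^p\|v\|_{H^1(\mathrm{M}\times(0,1))}
\\
&\quad+\epsilon^{-1}\Big(\|(\Delta+\partial_t^2-q)v\|_{L^2(\mathrm{M}\times(0,1))}
\\
&\hskip 1.5cm+\|v\|_{H^1(\Sigma\times(1/4,3/4))}+\|\partial_\nu v\|_{L^2(\Sigma\times(1/4,3/4))}\Big),
\end{align*}
where the Dirichlet part of the Cauchy data is measured in $H^1$ and the Neumann part in $L^2$. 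The impedance relation then eliminates the Neumann term: since $a\in C^\infty(\partial\mathrm{M})$ is bounded, $\|\partial_\nu v\|_{L^2(\Sigma\times(1/4,3/4))}=\sqrt{\mu}\,\|a v\|_{L^2(\Sigma\times(1/4,3/4))}\le C\sqrt{\mu}\,\|v\|_{H^1(\Sigma\times(1/4,3/4))}$, so both boundary contributions collapse into the single observation $\sqrt{\mu}\,\|v\|_{H^1(\Sigma\times(1/4,3/4))}$.

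Finally I would undo the lifting. Writing $v=e^{\sqrt{\mu}\,t}u$ and carrying out the $t$-integrations, the left-hand side carries the factor $\big(\int_{1/4}^{3/4}e^{2\sqrt{\mu}\,t}\,dt\big)^{1/2}$ while the right-hand volume and boundary terms carry $\big(\int_0^1 e^{2\sqrt{\mu}\,t}\,dt\big)^{1/2}$; after dividing through, their ratio equals $\sqrt{\sinh\sqrt{\mu}/\sinh(\sqrt{\mu}/2)}=\sqrt{2\cosh(\sqrt{\mu}/2)}=\mathbf{b}_\mu$, which is the source of that factor. The extra derivative $\partial_t v=\sqrt{\mu}\,v$ and the impedance coefficient $\sqrt{\mu}\,a$ produce the global $\sqrt{\mu}$, and the boundary term becomes $\sqrt{\mu}\,\mathbf{b}_\mu\|u\|_{H^1(\Sigma)}$; cleaning up constants then yields the stated estimate. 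The main obstacle is the bookkeeping of the boundary norms: one must rerun the Carleman argument with a genuinely nonzero Dirichlet trace and check that only the $H^1(\Sigma)$ norm of $u$ (tangential derivatives, nothing higher) is needed, so the regularity $v\in H^2(\mathrm{M}\times(0,1))$ together with $a\in C^\infty$ must be used to keep the trace of $\nabla_\tau v$ in $L^2$ and to absorb the complex, sign-dependent coefficient $a$ into the constants.
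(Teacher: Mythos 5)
Your proposal is correct and follows exactly the route the paper intends: the paper gives no details for Theorem \ref{theoremip1}, saying only that it follows from ``minor modifications'' of the proof of Theorem \ref{theorem1.0}, and your argument --- lifting to the cylinder with $v=e^{\sqrt{\mu}\,t}u$, invoking the Cauchy-data form of the quantitative unique continuation estimate of \cite[Propositions A.2 and A.3]{BC}, and using the impedance relation $\partial_\nu u=\pm i\sqrt{\mu}\,a\,u$ on $\Sigma$ to absorb the Neumann trace into $\sqrt{\mu}\,\|u\|_{H^1(\Sigma)}$ --- is precisely that modification, with the $\mathbf{b}_\mu$ and $\sqrt{\mu}$ bookkeeping done correctly (recall $\mu\ge\mu_0=4\kappa^2$ is fixed in this section, so $1+\sqrt{\mu}\lesssim_\kappa\sqrt{\mu}$).
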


In light of this quantitative unique continuation result, we obtain by mimicking the proof of Theorem \ref{mt1} the following stability inequality.

\begin{theorem}\label{theoremip3.0}
There exists $0<\Upsilon <1$ only depending on $n$, $\mathrm{M}$, $\kappa$, $\Sigma$  so that for every $q_1,q_2\in \mathscr{Q}$ satisfying $q_1=q_2$ in $\mathrm{M}_1$ and $0<\|\mathscr{N}_{q_1,\mu}^1-\mathscr{N}_{q_2,\mu}^1\|<\Upsilon$,we have
\[
\|q_1-q_2\|_{H^{-1}(\mathrm{M})}\le C\sqrt{\mu}\mathbf{b}_\mu \left| \ln \|\mathscr{N}_{q_1,\mu}^1-\mathscr{N}_{q_2,\mu}^1\|\right|^{-2/(n+2)},
\]
where the constant $C>0$ only depends on $n$, $\mathrm{M}$, $\kappa$, $\Sigma$.
\end{theorem}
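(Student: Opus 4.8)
The plan is to transcribe the proof of Theorem~\ref{mt1} into the impedance setting, substituting each Dirichlet ingredient by its impedance analogue: the resolvent $R_{q_1}(\lambda)$ and the a priori bounds of Lemma~\ref{lemma1} are replaced by the impedance solution operator $u^+_{q_1,\mu}(\cdot,\cdot)$ together with \eqref{ip3} and \eqref{ip6}; the quantitative unique continuation of Theorem~\ref{theorem1.0} is replaced by Theorem~\ref{theoremip1}; and the partial Dirichlet-to-Neumann map is replaced by $\mathscr{N}^1_{q,\mu}$. The complex geometric optics solutions of Proposition~\ref{proposition3.1}, taken on all of $\mathrm{M}$ as in the proof of Theorem~\ref{mt1}, may be reused without change, since they solve the same equation $(\Delta+\mu-q)u=0$ and carry no boundary condition.

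Concretely, I would fix $q_1,q_2\in\mathscr{Q}$ with $q_1=q_2$ in $\mathrm{M}_1$, take CGO solutions $u_j$ solving $(\Delta+\mu-q_j)u_j=0$, and let $\varphi_2=(\partial_\nu-i\sqrt{\mu}\,a)u_2{}_{|\partial\mathrm{M}}\in H^{1/2}(\partial\mathrm{M})$ be the impedance trace of $u_2$, so that $u_2=u^+_{q_2,\mu}(0,\varphi_2)$. Setting $\tilde u_2=u^+_{q_1,\mu}(0,\varphi_2)$ and $u=u_2-\tilde u_2$, the function $u$ satisfies the homogeneous impedance condition on $\partial\mathrm{M}$ (in particular $u\in\mathscr{H}$) and $(\Delta+\mu-q_1)u=(q_2-q_1)u_2$. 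The decisive simplification compared with the Dirichlet case is that \eqref{ip3} yields $\|u\|_{H^1(\mathrm{M})}\le C\|u_2\|_{L^2(\mathrm{M})}$ with a constant independent of the spectrum, so the resonance factor $\mathbf{e}_\lambda$ present in Lemma~\ref{lemma1} no longer occurs. Cutting off by $\psi\in C_0^\infty(\mathrm{M}'_0)$ with $\psi\equiv1$ near $\mathrm{M}_0$, and applying Green's formula to $w=\psi u$ against $u_1$ exactly as in the proof of Theorem~\ref{mt1}, the hypothesis $q_1=q_2$ in $\mathrm{M}_1$ and $\operatorname{supp}[\Delta,\psi]u\subset U\Subset\operatorname{Int}(\mathrm{M}_1)$ reduce the task to $\bigl|\int_{\mathrm{M}_0}(q_2-q_1)u_1u_2\,dx\bigr|\le C_0\|u\|_{H^1(U)}\|u_1\|_{L^2(\mathrm{M})}$.

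Next I would apply Theorem~\ref{theoremip1} on $\mathrm{M}_1$ to $u\in\mathscr{H}$. The interior term vanishes because $(\Delta+\mu-q_1)u=0$ on $\mathrm{M}_1$; the bulk term is controlled by $\|u\|_{H^1(\mathrm{M}_1)}\le C\|u_2\|_{L^2}$; and the Cauchy term is now $\|u\|_{H^1(\Sigma)}$. Since $u{}_{|\Sigma}=(\mathscr{N}^1_{q_2,\mu}-\mathscr{N}^1_{q_1,\mu})(\varphi_2)$, one gets $\|u\|_{H^1(\Sigma)}\le\|\mathscr{N}^1_{q_1,\mu}-\mathscr{N}^1_{q_2,\mu}\|\,\|\varphi_2\|_{H^{1/2}(\partial\mathrm{M})}$, and trace theorems combined with \eqref{ip6} give $\|\varphi_2\|_{H^{1/2}(\partial\mathrm{M})}\le C\sqrt{\mu}\,\|u_2\|_{H^2(\mathrm{M})}$. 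Optimizing the free parameter $\epsilon$ of Theorem~\ref{theoremip1} (as $\rho$ is optimized in the proof of Theorem~\ref{mt1}), with $\theta=p/(p+1)$ and $N=\|\mathscr{N}^1_{q_1,\mu}-\mathscr{N}^1_{q_2,\mu}\|$, produces a bound of the form $\bigl|\int_{\mathrm{M}_0}(q_2-q_1)u_1u_2\,dx\bigr|\le C\sqrt{\mu}\,\mathbf{b}_\mu\,(\sqrt{\mu}\,N)^\theta\|u_1\|_{L^2}\|u_2\|_{H^2}$; crucially the logarithm will be single, not double, because Theorem~\ref{theoremip1} depends only polynomially on $\epsilon^{-1}$.

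Finally I would insert the CGO estimates \eqref{3.1}--\eqref{3.2} and pass through \eqref{3.4}--\eqref{3.5} to obtain a bound $|\hat q(\eta)|\le C\,\mathbf{b}_\mu\,\mu^{a}\,N^\theta e^{\varkappa(|\eta|+2\tau)}+C_0\tau^{-1}$ for the Fourier transform of $q=q_1-q_2$ (extended by $0$), where $a$ is an explicit power of $\mu$, and then run the frequency-splitting argument of the proof of Theorem~\ref{mt1} verbatim: integrate $|\hat q|^2$ over $B(0,s)$, bound the high frequencies via $\|q\|_{L^2(\mathbb{R}^n)}\le C$, set $s=\tau^{2/(n+2)}$, and balance $\tau^{-2/(n+2)}$ against $e^{\varkappa\tau}N^\theta$ to get the single-logarithmic modulus, fixing $\Upsilon$ so that the balancing value of $\tau$ exceeds $\tau_\ast=2\varpi$. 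I expect the genuinely delicate point to be the accounting of the powers of $\mu$: one must verify that the factor $\sqrt{\mu}\,\mathbf{b}_\mu$ from Theorem~\ref{theoremip1}, the extra $\sqrt{\mu}$ coming from the impedance trace $\varphi_2$, and the $\mu$-dependence of the CGO $H^2$-bounds combine, after absorbing polynomial-in-$\tau$ factors into the exponential, into the constant $C\sqrt{\mu}\,\mathbf{b}_\mu$ displayed in the statement, while no spectral quantity reenters---this last feature being exactly what the unconditional estimate \eqref{ip3} secures.
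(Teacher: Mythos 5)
Your proposal is correct and follows essentially the same route as the paper, which itself only indicates that Theorem \ref{theoremip3.0} is obtained by mimicking the proof of Theorem \ref{mt1} with the quantitative unique continuation of Theorem \ref{theoremip1} in place of Theorem \ref{theorem1.0} and the unconditional bounds \eqref{ip3} and \eqref{ip6} in place of Lemma \ref{lemma1}. Your explicit accounting of the impedance trace $\varphi_2$, the disappearance of the spectral factor $\mathbf{e}_\lambda$, and the single-logarithm balancing matches the intended argument.
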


In the rest of this section $n=3$ and we assume in addition that $\lambda_0=\mu_0$. For all $q\in \mathscr{Q}_\lambda$, recall that
\[
\mathscr{S}_{q,\lambda}^0=\{ u\in H^2(\mathrm{M}_0);\; (\Delta +\lambda -q)u=0\},
\]
and set
\[
\mathcal{S}_{q,\lambda}^1=\{ u\in H^2(\mathrm{M});\; (\Delta +\lambda -q)u=0,\; (\partial_\nu -i\sqrt{\lambda}\, a)u\in H^{1/2}_\Gamma(\partial \mathrm{M})\}.
\]

Let $\Gamma_0\Subset \mathrm{Int}(\Gamma)$,  $\mathcal{E}=H^{1/2}(\Gamma)$ and $\psi \in C^\infty (\partial \mathrm{M})$ satisfying $0\le \psi \le 1$, $\psi=1$ in $\overline{\Gamma_0}$ and $\psi=0$ in $\partial M\setminus \Gamma$. Then consider the closed subset of $\mathcal{E}$ given by
\[
\mathcal{F}=\{f\in \mathcal{E};\; \psi f=0\}.
\]
The quotient space $\mathcal{E}/\mathcal{F}$ will be endowed with its quotient norm 
\[
\|\varphi\|_{\mathcal{E}/\mathcal{F}}=\inf\{\|f\|_{\mathcal{E}};\; f\in \dot{\varphi}\},
\]
where
\[
\dot{\varphi}=\{f\in \mathcal{E};\; \pi(f)=\varphi\}
\]
and $\pi:\mathcal{E}\rightarrow \mathcal{E}/\mathcal{F}$ is the quotient map. Also, if necessary, we identify $(\mathcal{E}/\mathcal{F})^\ast$ with $\mathcal{F}^\bot=\{f^\ast \in \mathcal{E}^\ast;\; f^\ast_{|\mathcal{F}}=0\}$, via the isometric isomorphism $f^\ast\in (\mathcal{E}/\mathcal{F})^\ast\mapsto f^\ast \circ \pi \in \mathcal{F}^\bot$. 

Let $\mathcal{H}$ be the closure of $\mathscr{S}_{q,\lambda}^0$ in $L^2(\mathrm{M}_0)$. We verify that if $\varphi \in \mathcal{E}/\mathcal{F}$, then $u^+_{q,\lambda}(0,\psi\overline{f})$ is independent of $f\in \dot{\varphi}$. Therefore, we can  define the bounded operator
\[
\mathcal{T}:\mathcal{E}/\mathcal{F}\rightarrow \mathcal{H}:\varphi \mapsto u^+_{q,\lambda}(0,\psi\overline{f})_{|\mathrm{M}_0}.
\]

Let $\mathfrak{I}$ be the canonical isomorphism 
\[
\mathfrak{I}: (\mathcal{E}/\mathcal{F})^\ast\rightarrow \mathcal{E}/\mathcal{F}: \langle u,v\rangle =(\mathfrak{I}u|v)_{\mathcal{E}/\mathcal{F}},
\]
where $(\cdot |\cdot)_{\mathcal{E}/\mathcal{F}}$ is the scalar product of $\mathcal{E}/\mathcal{F}$.

Let $\varphi \in \mathcal{E}/\mathcal{F}$, $v\in L^2(\mathrm{M}_0)$ and $w=u^-_{q,\lambda}(v\chi,0)$. As we have done for $T$, making integrations by parts, we find the formula
\[
(\mathcal{T}\varphi,v)_{L^2(\mathrm{M}_0)}=(\varphi|\overline{\mathfrak{I}(\psi \overline{w}}))_{\mathcal{E}/\mathcal{F}}.
\]
This identity shows that $\mathcal{T}^\ast$, the adjoint of $\mathcal{T}$, is given as follows
\[
\mathcal{T}^\ast: \mathcal{H}\rightarrow \mathcal{E}/\mathcal{F}: v\mapsto \overline{\mathfrak{I}(\psi\overline{w})}.
\]

We can mimic the proof we carried out for $T$ to show that $\mathcal{T}^\ast\mathcal{T}:\mathcal{E}/\mathcal{F}\rightarrow \mathcal{E}/\mathcal{F}$ is compact self-adjoint and positive definite operator. In consequence, there exist a sequence of positive numbers $(\mu_j)$ and an orthonormal basis $(\psi_j)$ of $\mathcal{E}/\mathcal{F}$ so that
\[
\mathcal{T}^\ast\mathcal{T}=\mu_j\psi_j,\quad j\ge 1.
\]
Furthermore, if $u_j=\mu_j^{-1/2}\mathcal{T}\psi_j$, $j\ge 1$, then $(u_j)$ forms an orthonormal basis of $\mathcal{H}$.

We use in the sequel the following notation
\[
\mathbf{n}_\lambda=\lambda^3\mathbf{e}_\lambda.
\]

\begin{theorem}\label{theoremip2}
(Quantitative Range approximation) Let $q\in \mathscr{Q}_\lambda$. For every $0<\epsilon <1$ and $u\in \mathscr{S}_{q,\lambda}^0$ there exists $v\in \mathcal{S}_{q,\lambda}^1$ such that
\begin{equation}\label{rap1}
\| u-v_{|\mathrm{M}_0}\|_{L^2(\mathrm{M}_0)}\le C\epsilon^\beta  \mathbf{n}_\lambda\|u\|_{H^2(\mathrm{M}_0)},
\quad \|v\|_{H^2(\mathrm{M})}\le e^{c/\epsilon}\|u\|_{L^2(\mathrm{M}_0)},
\end{equation}
where the constants $C>0$, $c>0$ and $\beta>0$ only depend on $\mathrm{M}$, $\mathrm{M_0}$, $\Gamma$, $\kappa$ and $a$.
\end{theorem}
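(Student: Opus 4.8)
The plan is to transcribe the proof of Theorem~\ref{theorem2} to the impedance setting, relying on the singular value decomposition of $\mathcal{T}$ already established above, with the Dirichlet solution operator replaced by the impedance solution operators $u^\pm_{q,\lambda}$ and the a priori bound of Lemma~\ref{lemma1}(ii) replaced by \eqref{ip6}. Writing $\tau_j=\mu_j^{1/2}$ and expanding $u=\sum_j a_j u_j\in\mathcal{H}$ in the orthonormal basis $(u_j)$, I would fix $t>0$, set $N_t^1=\{j;\ \tau_j>t\}$ and $N_t^0=\{j;\ \tau_j\le t\}$, and define $\varphi_t=\sum_{N_t^1}\tau_j^{-1}a_j\psi_j$ and $v_t=\sum_{N_t^0}a_ju_j$. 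Since $\mathcal{T}\psi_j=\tau_ju_j$ and $\mathcal{T}^\ast u_j=\tau_j\psi_j$, exactly as in Theorem~\ref{theorem2} one obtains $\|\varphi_t\|_{\mathcal{E}/\mathcal{F}}\le t^{-1}\|u\|_{L^2(\mathrm{M}_0)}$ and $\|\mathcal{T}^\ast v_t\|_{\mathcal{E}/\mathcal{F}}\le t\|v_t\|_{L^2(\mathrm{M}_0)}$, together with $u-\mathcal{T}\varphi_t=v_t$ and $\mathcal{T}\varphi_t\perp v_t$, whence $\|v_t\|_{L^2(\mathrm{M}_0)}^2=(u|v_t)_{L^2(\mathrm{M}_0)}$.

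Next I would introduce the adjoint solution $w_t=u^-_{q,\lambda}(\chi v_t,0)$, so that $(\Delta+\lambda-q)w_t=\chi v_t$ with $(\partial_\nu+i\sqrt{\lambda}\,a)w_t=0$ on $\partial\mathrm{M}$. Because $\chi v_t$ is supported in $\mathrm{M}_0$ and $(\Delta+\lambda-q)u=0$ there, Green's formula on $\mathrm{M}_0$ and the trace theorem give, exactly as in the proof of Theorem~\ref{theorem2}, $\|v_t\|^2_{L^2(\mathrm{M}_0)}\le C\|u\|_{H^2(\mathrm{M}_0)}\|w_t\|_{H^{7/4}(\mathrm{M}_1)}$. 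The function $w_t$ solves the homogeneous equation in $\mathrm{M}_1$ and satisfies the impedance condition on $\partial\mathrm{M}\supset\Gamma_0$, so the impedance analogue of Corollary~\ref{corollary1.0}—obtained from Theorem~\ref{theoremip1} by interpolating $\|w_t\|_{H^1}$ against $\|w_t\|_{H^2}$ up to $H^{7/4}$ and invoking \eqref{ip6} together with the resolvent estimate of Lemma~\ref{lemma1}—would bound $\|w_t\|_{H^{7/4}(\mathrm{M}_1)}$ by $C\mathbf{e}_\lambda$ times a fixed power of $\lambda$ times $\bigl(\epsilon^\beta\|v_t\|_{L^2(\mathrm{M}_0)}+e^{c/\epsilon}\|w_t\|_{H^1(\Gamma_0)}\bigr)$, the measured boundary quantity here being the \emph{trace} $\|w_t\|_{H^1(\Gamma_0)}$ rather than a normal derivative, reflecting the nature of the impedance data.

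It then remains to estimate $\|w_t\|_{H^1(\Gamma_0)}$. The adjoint identity derived above reads $\mathcal{T}^\ast v_t=\overline{\mathfrak{I}(\psi\overline{w_t})}$, so the isometry of $\mathfrak{I}$ and the spectral bound above give the weak control $\|\psi\overline{w_t}\|_{(\mathcal{E}/\mathcal{F})^\ast}\le t\|v_t\|_{L^2(\mathrm{M}_0)}$ on the Dirichlet trace of $w_t$, an $H^{-1/2}$-type estimate since $\mathcal{E}=H^{1/2}(\Gamma)$. Interpolating this against the $H^{3/2}(\partial\mathrm{M})$ bound furnished by $\|w_t\|_{H^2(\mathrm{M})}\le C\sqrt{\lambda}\,\|v_t\|_{L^2(\mathrm{M}_0)}$ (a consequence of \eqref{ip6}), at interpolation parameter $3/4$, gives $\|\psi w_t\|_{H^1(\partial\mathrm{M})}\le C\,t^{1/4}\|v_t\|_{L^2(\mathrm{M}_0)}$ up to a power of $\lambda$. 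Feeding this back, choosing $t$ so that $t^{1/4}e^{c/\epsilon}=\epsilon^\beta$ as in Theorem~\ref{theorem2}, and finally setting $v=u^+_{q,\lambda}(0,\psi\overline{f_t})\in\mathcal{S}^1_{q,\lambda}$ for some $f_t\in\dot{\varphi_t}$ (whose $H^2$ norm is controlled by \eqref{ip6} and $\|\varphi_t\|_{\mathcal{E}/\mathcal{F}}$), should yield the two stated inequalities, the accumulated powers of $\lambda$ and the factor $\mathbf{e}_\lambda$ collapsing into $\mathbf{n}_\lambda=\lambda^3\mathbf{e}_\lambda$.

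The main obstacle I anticipate is twofold. First, the duality must be re-run for the impedance condition: the Green's formula computation for $\mathcal{T}^\ast$ succeeds precisely because $w_t$ solves the \emph{conjugate} impedance problem $u^-_{q,\lambda}$, which forces the boundary cross-terms $\pm i\sqrt{\lambda}\,a\,u\overline{w_t}$ to cancel and leaves only the Dirichlet-trace pairing $\int_{\partial\mathrm{M}}\psi\overline{f}\,\overline{w_t}\,ds$. Second, and more delicate than in the Dirichlet case, is the interpolation step converting the weak $(\mathcal{E}/\mathcal{F})^\ast$-bound on the Dirichlet trace into the $H^1(\Gamma_0)$ norm demanded by the impedance unique continuation inequality, since the measured quantity is now the trace of $w_t$ itself rather than its normal derivative; keeping track of the resulting $\lambda$-powers and of the interplay between \eqref{ip6} and the resolvent bound so as to land exactly on $\mathbf{n}_\lambda=\lambda^3\mathbf{e}_\lambda$ is the principal bookkeeping difficulty.
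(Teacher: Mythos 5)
Your proposal follows essentially the same route as the paper: the same spectral truncation $\varphi_t$, $v_t$ built from the singular value decomposition of $\mathcal{T}$, the same dual function $w_t=u^-_{q,\lambda}(\chi v_t,0)$, the same conversion of the weak $(\mathcal{E}/\mathcal{F})^\ast$ bound on $\psi\overline{w_t}$ into an $H^1(\partial\mathrm{M})$ bound carrying a factor $t^{1/4}$ by interpolation, and the same choice $t^{1/4}=\epsilon^\beta e^{-c/\epsilon}$. The only cosmetic divergences are that the paper estimates $w_t$ in $H^1(\mathrm{M}_1)$ (via the analogue of Theorem \ref{theoremB1}, not the interior estimate of Theorem \ref{theoremip1}) rather than $H^{7/4}(\mathrm{M}_1)$, and your interpolation endpoints $H^{-1/2}(\partial\mathrm{M})$ and $H^{3/2}(\partial\mathrm{M})$ with weights $1/4$ and $3/4$ are in fact the ones that correctly land on $H^1(\partial\mathrm{M})$.
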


\begin{proof}
Pick $t>0$ and set $N_t^1=\{j;\; \tau_j>t\}$. Let $u=\sum_j a_ju_j\in \mathcal{H}$ and
\[
\varphi_t=\sum_{N_t} \tau_j^{-1} a_j\psi_j,
\]
where $\tau_j=\mu_j^{1/2}$ for each $j$.

Noting that
\[
\|\varphi_t\|_{\mathcal{E}/\mathcal{F}}^2=\sum_{N_t^1} \tau_j^{-2} |a_j|^2\le t^{-2}\sum_j |a_j|^2=t^{-2}\|u\|_{L^2(\mathrm{M}_0)}^2,
\]
we derive the following inequality
\begin{equation}\label{4e}
\|\varphi_t\|_{\mathcal{E}/\mathcal{F}}\le t^{-1}\|u\|_{L^2(\mathrm{M}_0)}.
\end{equation}

Let $N_t^0=\{j;\; \tau_j\le t\}$ and
\[
v_t =\sum_{j\in N_t^0} a_ju_j.
\]
Then $u=T\varphi_t+v_t$ and, since $T\varphi_t\bot v_t$, we find
\[
\|v_t\|_{L^2(\mathrm{M}_0)}^2=(v_t|v_t)_{L^2(\mathrm{M}_0)}=(v_t+Tg_t|v_t)_{L^2(\mathrm{M}_0)}=(u|v_t)_{L^2(\mathrm{M}_0)}.
\]
Let $w_t=u^-_{q,\lambda}(\chi v_t,0)$. Then
\[
\|v_t\|_{L^2(\mathrm{M}_0)}^2=(u|(\Delta +\lambda -q)w_t)_{L^2(\mathrm{M}_0)}.
\]
Using $(\Delta +\lambda -q)u=0$ and applying Green's formula, we obtain
\begin{align*}
\|v_t\|_{L^2(\mathrm{M}_0)}^2&=(u|\partial_\nu w_t)_{L^2(\partial \mathrm{M}_0)}-(\partial_\nu u|w_t)_{L^2(\partial \mathrm{M}_0)}
\\
&=-(u|iaw_t)_{L^2(\partial \mathrm{M}_0)}-(\partial_\nu u|w_t)_{L^2(\partial \mathrm{M}_0)}
\\
&\le \|a\|_{L^\infty(\partial \mathrm{M})}\|u\|_{L^2(\partial \mathrm{M}_0)} \|w_t\|_{L^2(\partial \mathrm{M}_0)}+\|\partial_\nu u\|_{L^2(\partial \mathrm{M}_0)}\|w_t\|_{L^2(\partial \mathrm{M}_0)}.
\end{align*}

As the trace mapping $h\in H^1(\mathrm{M}_1)\mapsto h_{|\partial \mathrm{M}_0}\in  L^2(\partial \mathrm{M}_0)$ is bounded, we get from the last inequality
\begin{equation}\label{6e}
\|v_t\|_{L^2(\mathrm{M}_0)}^2\le C_0\|u\|_{H^2(\mathrm{M}_0)}\|w_t\|_{H^1(\mathrm{M}_1)},
\end{equation}
where the constant $C_0>0$ only depends on $n$, $\mathrm{M}_1$, $n$ and $a$.

Pick $0<\epsilon <1$ arbitrary. Modifying slightly the proof Theorem \ref{theoremB1} in which $\mathrm{M}$ is substituted by $\mathrm{M}_1$ and using \eqref{ip6}, we obtain
\begin{align*}
\|w_t\|_{H^1(\mathrm{M}_1)}&\le C\lambda^{-1/2}\mathbf{n}_\lambda\left[ \epsilon ^\beta \|v_t\|_{L^2(\mathrm{M}_0)}+ e^{c/\epsilon}\| w_t\|_{H^1(\Gamma_0)}\right]
\\
&\le C\lambda^{-1/2}\mathbf{n}_\lambda\left[ \epsilon ^\beta \|v_t\|_{L^2(\mathrm{M}_0)}+ e^{c/\epsilon}\| \psi w_t\|_{H^1(\partial \mathrm{M})}\right]
\end{align*}
where the constants $C>0$, $c>0$ and $\beta$ only depend on $\mathrm{M}$, $\mathrm{M_0}$, $\Gamma$, $\kappa$ and $a$.
The interpolation inequality in \cite[Theorem 7.7]{LM} yields
\begin{align*}
\| \psi w_t\|_{H^1(\partial \mathrm{M})}&\le C \| \psi w_t\|_{H^{-1/2}(\partial \mathrm{M})}^{1/4}\| \psi w_t\|_{H^{1/2}(\partial \mathrm{M})}^{3/4}
\\
&\le C \| \psi w_t\|_{H^{-1/2}(\partial \mathrm{M})}^{1/4}\| \psi w_t\|_{H^{1/2}(\partial \mathrm{M})}^{3/4}
\\
&\le C \| \mathcal{T}^\ast v_t\|_{\mathcal{E}/\mathcal{F}}^{1/4}\| \psi w_t\|_{H^{1/2}(\partial \mathrm{M})}^{3/4}
\\
&\le Ct^{1/4} \|v_t\|_{L^2(\mathrm{M}_0)}^{1/4}\| w_t\|_{H^{1/2}(\partial \mathrm{M})}^{3/4}
\end{align*}
In light of \eqref{ip6}, the last inequality gives
\[
\| \psi w_t\|_{H^1(\partial \mathrm{M})}\le Ct^{1/4}\lambda^{1/2} \| v_t\|_{L^2(\mathrm{M}_0)}
\]

In consequence, we have
\[
\|w_t\|_{H^1(\mathrm{M}_1)}\le C\mathbf{n}_\lambda\left[\epsilon ^\beta+ t^{1/4}e^{c/\epsilon}\right]\|v_t\|_{L^2(\mathrm{M}_0)},\quad  0<\epsilon <1.
\]
This and \eqref{6e} yield
\[
\|v_t\|_{L^2(\mathrm{M}_0)}\le C\mathbf{n}_\lambda\|u\|_{H^2(\mathrm{M}_0)}\left[\epsilon ^\beta + t^{1/4}e^{c/\epsilon}\right],\quad  0<\epsilon <1,
\]

Taking  $t^{1/4}=\epsilon^\beta e^{-c/\epsilon}$ in the last inequality, we  get
\begin{equation}\label{9e}
\|v_t\|_{L^2(\mathrm{M}_0)}\le C \epsilon^\beta \mathbf{n}_\lambda\|u\|_{H^2(\mathrm{M}_0)},
\end{equation}
and in light of \eqref{4e} we derive, upon substituting $c$ by another similar constant,
\begin{equation}\label{10e}
\|\varphi_t\|_{\mathcal{E}/\mathcal{F}}\le \epsilon^{c/\epsilon}\|u\|_{L^2(\mathrm{M}_0)}.
\end{equation}

Let $v=u^+_{q,\lambda}(0,\psi f_t)$, where $f\in \dot{\varphi}$ is arbitrary chosen. Then $v\in \mathscr{S}_{q,\lambda}^1$ and  as an immediate consequence of \eqref{ip6}  and \eqref{10e}, we get
\[
\|v\|_{H^2(\mathrm{M})}\le \epsilon^{c/\epsilon}\|u\|_{L^2(\mathrm{M}_0)}.
\]
The proof is then complete.
\end{proof}

Next, we prove the following proposition, where
\[
\tilde{\mathbf{n}}_\lambda=\max\left(\mathbf{n}_\lambda^2, \sqrt{\lambda}\mathbf{b}_\lambda\right).
\]

\begin{proposition}\label{propositionip1}
(Integral inequality) Let $q_1,q_2\in \mathscr{Q}_\lambda$ satisfying $q_1=q_2$ in $\mathrm{M}_1$. For each $0<\epsilon <1$ and $u_j\in \mathscr{S}_{q_j,\lambda}^0$, $j=1,2$, we have
\begin{align*}
&C\tilde{\mathbf{n}}_\lambda^{-1}\left|\int_{\mathrm{M}_0}(q_1-q_2)u_1u_2d\mu\right|\le \epsilon^\beta  \|u_1\|_{H^2(\mathrm{M}_0)}\|u_2\|_{H^2(\mathrm{M}_0)}
\\
&\hskip 4.5cm+e^{c/\epsilon}\|\mathscr{N}_{q_1,\lambda}^0-\mathscr{N}_{q_2,\lambda}^0\|^\theta \|u_1\|_{L^2(\mathrm{M})}\|u_2\|_{L^2(\mathrm{M})},
\end{align*}
where the constants $C>0$ and $c>0$ only depend on $\mathrm{M}$, $\mathrm{M}_0$, $\kappa$,  $\Gamma$ and $\Sigma$, while the constant $\beta>0$ only depends on $\mathrm{M}$, $\mathrm{M}_0$, $\kappa$,  and $\Gamma$, and the constant $0<\theta<1$ only depends on $\mathrm{M}$, $\mathrm{M}_0$, $\kappa$,  and $\Sigma$.
\end{proposition}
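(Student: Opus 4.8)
The plan is to reproduce the argument of Section~\ref{section4} leading to Proposition~\ref{proposition2.1}, replacing each Dirichlet ingredient by the impedance analogue built earlier in this section. Fix $0<\epsilon<1$ and $u_j\in\mathscr{S}_{q_j,\lambda}^0$, and apply the quantitative Runge approximation of Theorem~\ref{theoremip2} to obtain $v_j\in\mathcal{S}_{q_j,\lambda}^1$ with
\[
\|u_j-{v_j}_{|\mathrm{M}_0}\|_{L^2(\mathrm{M}_0)}\le C\epsilon^\beta\mathbf{n}_\lambda\|u_j\|_{H^2(\mathrm{M}_0)},\qquad \|v_j\|_{H^2(\mathrm{M})}\le e^{c/\epsilon}\|u_j\|_{L^2(\mathrm{M}_0)}.
\]
Expanding $u_1u_2=(u_1-v_1)u_2+(v_1-u_1)(u_2-v_2)+u_1(u_2-v_2)+v_1v_2$, estimating the first three products by the approximation bound and using $\epsilon<1$, the quantity $\int_{\mathrm{M}_0}(q_1-q_2)u_1u_2\,d\mu$ is reduced, modulo an error of size $C\mathbf{n}_\lambda^2\epsilon^\beta\|u_1\|_{H^2(\mathrm{M}_0)}\|u_2\|_{H^2(\mathrm{M}_0)}$, to $\int_{\mathrm{M}_0}(q_1-q_2)v_1v_2\,d\mu$. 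Using both approximants is exactly what produces the $\mathbf{n}_\lambda^2$ factor, one of the two branches of $\tilde{\mathbf{n}}_\lambda$.

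Next I would linearise. Let $\varphi_2=(\partial_\nu-i\sqrt{\lambda}\,a)v_2$ denote the impedance trace of $v_2$, which lies in $H_\Gamma^{1/2}(\partial\mathrm{M})$ precisely because $v_2\in\mathcal{S}_{q_2,\lambda}^1$, set $\tilde v_2=u^+_{q_1,\lambda}(0,\varphi_2)$, and put $v=v_2-\tilde v_2=u^+_{q_1,\lambda}((q_2-q_1)v_2,0)$. Then $(\Delta+\lambda-q_1)v=(q_2-q_1)v_2$ is supported in $\overline{\mathrm{M}_0}$, so it vanishes in $\mathrm{Int}(\mathrm{M}_1)$, while $v$ satisfies the homogeneous impedance condition $(\partial_\nu-i\sqrt{\lambda}\,a)v=0$ on all of $\partial\mathrm{M}$; in particular $v\in\mathscr{H}$. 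Choosing $\psi\in C_0^\infty(\mathrm{M}'_0)$ equal to $1$ near $\mathrm{M}_0$ and applying Green's formula to the compactly supported $\psi v$ tested against $v_1$ (so no boundary terms survive), I get
\[
0=\int_{\mathrm{M}}(q_2-q_1)v_2v_1\,dx+\int_{\mathrm{M}}[\Delta,\psi]v\,v_1\,dx,
\]
whence $\bigl|\int_{\mathrm{M}}(q_2-q_1)v_2v_1\,dx\bigr|\le C_0\|v\|_{H^1(U)}\|v_1\|_{L^2(\mathrm{M})}$, since $\mathrm{supp}[\Delta,\psi]v\subset U\Subset\mathrm{Int}(\mathrm{M}_1)$.

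I would then invoke the impedance quantitative unique continuation of Theorem~\ref{theoremip1}, applied on $\mathrm{M}_1$ with a free parameter $\rho>0$; since the interior residual vanishes, this gives
\[
C\|v\|_{H^1(U)}\le\sqrt{\lambda}\,\mathbf{b}_\lambda\bigl(\rho^p\|v\|_{H^1(\mathrm{M}_1)}+\rho^{-1}\|v\|_{H^1(\Sigma)}\bigr).
\]
The first norm is controlled with no exponential loss by the uniform a priori estimate \eqref{ip3}, namely $\|v\|_{H^1(\mathrm{M}_1)}\le\|v\|_{H^1(\mathrm{M})}\le C_1\|v_2\|_{L^2(\mathrm{M}_0)}$. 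The second is the measured quantity: the identification $v_{|\Sigma}=(\mathscr{N}_{q_2,\lambda}^0-\mathscr{N}_{q_1,\lambda}^0)\varphi_2$, together with a trace estimate and \eqref{ip6}, bounds $\|v\|_{H^1(\Sigma)}$ by $C\sqrt{\lambda}\,e^{c/\epsilon}\|\mathscr{N}_{q_1,\lambda}^0-\mathscr{N}_{q_2,\lambda}^0\|\,\|u_2\|_{L^2(\mathrm{M}_0)}$. Bounding $\|v_1\|_{L^2(\mathrm{M})}$ and $\|v_2\|_{L^2(\mathrm{M}_0)}$ by $e^{c/\epsilon}\|u_j\|_{L^2(\mathrm{M}_0)}$ through the second Runge inequality, and optimising in $\rho$ (the choice $\rho=\|\mathscr{N}_{q_1,\lambda}^0-\mathscr{N}_{q_2,\lambda}^0\|^{1/(p+1)}$, as in Section~\ref{section4}), produces a factor $\|\mathscr{N}_{q_1,\lambda}^0-\mathscr{N}_{q_2,\lambda}^0\|^\theta$ with $\theta=p/(p+1)$ and a coefficient that, after the requisite $\lambda$-bookkeeping, reduces to $\sqrt{\lambda}\,\mathbf{b}_\lambda e^{c/\epsilon}$ — the second branch of $\tilde{\mathbf{n}}_\lambda$. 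Inserting this together with the Runge error of the first step and dividing through by $\tilde{\mathbf{n}}_\lambda=\max(\mathbf{n}_\lambda^2,\sqrt{\lambda}\,\mathbf{b}_\lambda)$ then yields the asserted inequality.

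The main obstacle, and the genuine departure from the Dirichlet proof, is the boundary step just described. In the Dirichlet setting the measured object is the Neumann trace $\partial_\nu v_{|\Sigma}\in L^2(\Sigma)$ delivered by the difference of Dirichlet-to-Neumann maps; here the impedance maps send impedance data to the \emph{Dirichlet} trace, so the measured object is $v_{|\Sigma}$ and Theorem~\ref{theoremip1} demands control of its full $H^1(\Sigma)$ norm — which is exactly why $\mathscr{N}_{q,\lambda}^0$ was arranged to take values in $H^1(\Sigma)$. One must check with care that $\varphi_2\in H_\Gamma^{1/2}(\partial\mathrm{M})$ (so that both $\mathscr{N}_{q_j,\lambda}^0\varphi_2$ are defined), that $v_2=u^+_{q_2,\lambda}(0,\varphi_2)$ and $\tilde v_2=u^+_{q_1,\lambda}(0,\varphi_2)$ so that the identification of $v_{|\Sigma}$ is valid, and that the $\sqrt{\lambda}$ generated by the impedance coefficient $\sqrt{\lambda}\,a$ inside $\|\varphi_2\|_{H^{1/2}(\partial\mathrm{M})}$ is reconciled with the $\sqrt{\lambda}\,\mathbf{b}_\lambda$ prefactor of Theorem~\ref{theoremip1}. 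The remaining work is the accounting of the powers of $\lambda$, of $\mathbf{e}_\lambda$, and of the $e^{c/\epsilon}$ factors, so that the two error contributions combine exactly into $\tilde{\mathbf{n}}_\lambda$; this is routine, but must be carried out with the same care as in the proof of Proposition~\ref{proposition2.1}.
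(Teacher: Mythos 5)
Your proposal is correct and follows essentially the same route as the paper's proof: the same Runge decomposition of $u_1u_2$ via Theorem \ref{theoremip2} producing the $\mathbf{n}_\lambda^2\epsilon^\beta$ error, the same auxiliary function $v=v_2-\tilde v_2$ with homogeneous impedance trace, the same Green's formula/commutator localization, and the same application of Theorem \ref{theoremip1} on $\mathrm{M}_1$ combined with \eqref{ip3}, \eqref{ip6} and the optimization $\rho=\|\mathscr{N}_{q_1,\lambda}^0-\mathscr{N}_{q_2,\lambda}^0\|^{1/(p+1)}$. The subtleties you flag (that the measured quantity is now the Dirichlet trace $v_{|\Sigma}$ controlled in $H^1(\Sigma)$, and the $\sqrt{\lambda}$ coming from the impedance coefficient) are exactly the points the paper's proof handles, so no further changes are needed.
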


\begin{proof}
 In this proof $C>0$, $c>0$ and $\beta>0$ are generic constants having the same dependence as in the statement of Proposition \ref{propositionip1}.

Let $0<\epsilon <1$ and $u_j\in \mathscr{S}_{q_j,\lambda}^0$, $j=1,2$. From Theorem \ref{theoremip2}, there exist $v_j\in \mathcal{S}_{q_j,\lambda}^1$, $j=1,2$, so that
\begin{align}
&\| u_j-v_j{_{|\mathrm{M}_0}}\|_{L^2(\mathrm{M}_0)}\le C\epsilon^\beta  \mathbf{n}_\lambda\|u_j\|_{H^2(\mathrm{M}_0)},\label{rap2}
\\
& \|v_j\|_{H^2(\mathrm{M})}\le e^{c/\epsilon}\|u_j\|_{L^2(\mathrm{M}_0)}.\label{rap3}
\end{align}
Similarly to \eqref{2.4} we get from \eqref{rap2}
\begin{align}
&C\left|\int_{\mathrm{M}_0}(q_1-q_2)u_1u_2d\mu\right|\le \epsilon^\beta \mathbf{n}_\lambda^2 \|u_1\|_{H^2(\mathrm{M}_0)}\|u_2\|_{H^2(\mathrm{M}_0)} \label{2.4.0}
\\
&\hskip 6cm+\left|\int_{\mathrm{M}_0}(q_1-q_2)v_1v_2d\mu\right|.\nonumber
\end{align}
Let $\tilde{v}_2$ be the solution of the BVP
\[
(\Delta +\lambda -q_1)\tilde{v}_2=0\; \mathrm{in}\; \mathrm{M}\quad (\partial_\nu -i\sqrt{\lambda}\, a)\tilde{v}_2=(\partial_\nu -i\sqrt{\lambda}\, a)v_2.
\]
Pick $\psi \in C_0^\infty (\mathrm{M}_0')$ so that $\psi=1$ in a neighborhood of $\mathrm{M}_0$. Let $v=v_2-\tilde{v}_2$ and $w=\psi v$. Using that $w$ is the solution of the equation
\[
(\Delta +\lambda -q_1)w=(q_2-q_1)v_2+[\Delta,\psi]v\; \mathrm{in}\; \mathrm{M}.
\]
and Green's formula, we obtain
\[
\int_\mathrm{M}(q_2-q_1)v_1v_2dx=-\int_\mathrm{M}[\Delta,\psi]vv_1dx.
\]
As $\mathrm{supp}([\Delta,\psi]v)\subset U\Subset \mathrm{Int}(\mathrm{M}_1)$, we find
\begin{equation}\label{ip15}
\left| \int_\mathrm{M}(q_2-q_1)v_1v_2dx\right|\le C_0\|v\|_{H^1(U)}\|v_1\|_{L^2(\mathrm{M})}.
\end{equation}
On the other hand, applying Theorem \ref{theoremip1}, where $\mathrm{M}$ is substituted by $\mathrm{M}_1$, and using \eqref{ip3}, we obtain for every $\rho>0$
\begin{align*}
C\|v\|_{H^1(U)}&\le \rho ^p\sqrt{\lambda}\mathbf{b}_\lambda\left[ \|v_2\|_{L^2(\mathrm{M}_0)}+\rho^{-1}\|v\|_{H^1(\Sigma)}\right]
\\
&\le \rho ^p\sqrt{\lambda}\mathbf{b}_\lambda\left[ \|v_2\|_{L^2(\mathrm{M}_0)}+\rho^{-1}\|(\mathscr{N}_{q_1,\lambda}^0-\mathscr{N}_{q_2,\lambda}^0)((\partial_\nu -i\sqrt{\lambda}\, a)v_2)\|_{H^1(\Sigma)}\right]
\\
&\le \rho ^p\sqrt{\lambda}\mathbf{b}_\lambda \left[\|v_2\|_{L^2(\mathrm{M}_0)}+\rho^{-1}\|\mathscr{N}_{q_1,\lambda}^0-\mathscr{N}_{q_2,\lambda}^0\|\|(\partial_\nu -i\sqrt{\lambda}\, a)v_2)\|_{H^{1/2}(\partial \mathrm{M})}\right],
\end{align*}
where $p$ is as in Theorem \ref{theoremip1}. This inequality in \eqref{ip15} implies
\begin{align*}
&\left| \int_\mathrm{M}(q_2-q_1)v_1v_2dx\right|
\\
&\hskip 1cm \le C\sqrt{\lambda}\mathbf{b}_\lambda\left[ \rho ^p \|v_2\|_{L^2(\mathrm{M})}+\rho^{-1}\|\mathscr{N}_{q_1,\lambda}^0-\mathscr{N}_{q_2,\lambda}^0\|\|v_2\|_{H^2(\mathrm{M})}\right] \|v_1\|_{L^2(\mathrm{M})}.
\end{align*}
Combined with \eqref{rap3}, this inequality yields
\begin{align*}
&\left| \int_\mathrm{M}(q_2-q_1)v_1v_2dx\right|
\\
&\hskip 1cm \le \sqrt{\lambda}\mathbf{b}_\lambda e^{c/\epsilon}\left[ \rho ^p +\rho^{-1}\|\mathscr{N}_{q_1,\lambda}^0-\mathscr{N}_{q_2,\lambda}^0\|\right] \|u_1\|_{L^2(\mathrm{M})}\|u_2\|_{L^2(\mathrm{M})}.
\end{align*}
Taking $\rho=\|\mathscr{N}_{q_1,\lambda}^0-\mathscr{N}_{q_2,\lambda}^0\|^{1/(1+p)}$, we get
\[
\left| \int_\mathrm{M}(q_2-q_1)v_1v_2dx\right|\le \sqrt{\lambda}\mathbf{b}_\lambda e^{c/\epsilon}\|\mathscr{N}_{q_1,\lambda}^0-\mathscr{N}_{q_2,\lambda}^0\|^\theta \|u_1\|_{L^2(\mathrm{M})}\|u_2\|_{L^2(\mathrm{M})},
\]
where $\theta=p/(1+p)$.

The expected inequality is obtained by putting together the last inequality and \eqref{2.4.0}.
\end{proof}

In light of the integral inequality in Proposition \ref{propositionip1}, we can repeat step by step the proof of Theorem \ref{mt}. We obtain the following result.

\begin{theorem}\label{theoremip3}
Assume $n=3$. There exists $0<\Upsilon <e^{-1}$ only depending on $\mathrm{M}$, $\mathrm{M}_0$, $\Gamma$, $\Sigma$ and $\kappa$   so that  for every $q_1,q_2\in \mathscr{Q}_\lambda$ satisfying $q_1=q_2$ in $\mathrm{M}_1$ and $0<\|\mathscr{N}_{q_1,\lambda}^0-\mathscr{N}_{q_2,\lambda}^0\|<\Upsilon$, we have
\[
\|q_1-q_2\|_{H^{-1}(\mathrm{M})}\le C\tilde{\mathbf{n}}_\lambda \left| \ln \ln \|\mathscr{N}_{q_1,\lambda}^0-\mathscr{N}_{q_2,\lambda}^0\|\right|^{-2/(n+2)},
\]
where the constant $C>0$ only depends on $\mathrm{M}$, $\mathrm{M}_0$, $\Gamma$, $\Sigma$ and $\kappa$.
\end{theorem}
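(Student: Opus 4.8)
The plan is to follow the proof of Theorem~\ref{mt} line by line, replacing the integral inequality \eqref{2.4.1} of Proposition~\ref{proposition2.1} by the integral inequality of Proposition~\ref{propositionip1} and keeping the complex geometric optics solutions of Proposition~\ref{proposition3.1}. The decisive observation is that Proposition~\ref{propositionip1} has exactly the same architecture as \eqref{2.4.1}: a prefactor ($\tilde{\mathbf{n}}_\lambda$ in place of $\mathbf{n}_\lambda$), a term $\epsilon^\beta\|u_1\|_{H^2(\mathrm{M}_0)}\|u_2\|_{H^2(\mathrm{M}_0)}$, and a data term $e^{c/\epsilon}\|\mathscr{N}_{q_1,\lambda}^0-\mathscr{N}_{q_2,\lambda}^0\|^\theta$ against the $L^2$ norms, the Dirichlet map $\Lambda_{q_j,\lambda}^0$ being simply traded for the impedance map $\mathscr{N}_{q_j,\lambda}^0$. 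Consequently the frequency-side analysis transfers essentially verbatim, and no new analytic input is needed beyond what is already assembled in Sections~\ref{section3}--\ref{section6}.

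Concretely, I would fix $\eta\in\mathbb{R}^n$, construct $\xi_1,\xi_2$ as in \eqref{3.3} with $|\Im\xi_j|\in[\tau,\tau+|\eta|/2]$ and $\tau>2\varpi$, and set $u_j=u_{\xi_j}\in\mathscr{S}_{q_j,\lambda}^0$. Writing $q=q_1-q_2$ extended by zero outside $\mathrm{M}$ and using $u_1u_2=e^{-i\eta\cdot x}+\varrho$ together with the remainder bound \eqref{3.1}, one obtains $|\hat{q}(\eta)|\le\bigl|\int_{\mathrm{M}_0}q\,u_1u_2\,dx\bigr|+C\tau^{-1}$ exactly as in \eqref{3.4}. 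The estimate \eqref{3.2} controls $\|u_j\|_{H^2(\mathrm{M}_0)}$ and hence $\|u_j\|_{L^2(\mathrm{M}_0)}$ by $C\lambda e^{\varkappa|\Im\xi_j|}$, so feeding these into Proposition~\ref{propositionip1} produces, after the usual relabelling of constants, a pointwise bound valid for $|\eta|\le s$ of the same shape as the first display in the proof of Theorem~\ref{mt}, namely
\[
C\tilde{\mathbf{n}}_\lambda^{-1}|\hat{q}(\eta)|\le \tau^{-1}+\epsilon^\beta e^{\varkappa(s+\tau)}+e^{c/\epsilon}e^{\varkappa(s+\tau)}\|\mathscr{N}_{q_1,\lambda}^0-\mathscr{N}_{q_2,\lambda}^0\|^\theta ,
\]
the $\lambda$-powers coming from \eqref{3.2} being carried into the prefactor $\tilde{\mathbf{n}}_\lambda$.

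Next I would square and integrate over $B(0,s)$, add the high-frequency tail $\int_{|\eta|\ge s}(1+|\eta|^2)^{-1}|\hat{q}(\eta)|^2\,d\eta\le Cs^{-2}$ to reconstruct $\|q\|_{H^{-1}(\mathbb{R}^n)}$, and then optimise the three free parameters exactly as in Theorem~\ref{mt}: take $s=\tau^{2/(n+2)}$; balance $\epsilon$ by imposing $\epsilon^\beta=\tau^{-4/(n+2)}e^{-\varkappa\tau}$, so that $1/\epsilon\sim\tau$ and the factor $e^{c/\epsilon}$ becomes a genuine second exponential $e^{e^{\varkappa\tau}}$ in front of the data term; and finally choose $\tau$ to balance the two surviving terms. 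Solving the resulting transcendental relation gives $\tau\sim|\ln\ln\|\mathscr{N}_{q_1,\lambda}^0-\mathscr{N}_{q_2,\lambda}^0\||$, which is precisely the double-logarithmic rate, with the threshold $\Upsilon$ fixed analogously to $\Upsilon_0$ in Theorem~\ref{mt}.

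The only genuine work is this multi-parameter optimisation together with careful bookkeeping of the $\lambda$-dependent prefactor so that it collapses to $\tilde{\mathbf{n}}_\lambda$. The truly delicate analytic ingredients --- the quantitative unique continuation of Theorem~\ref{theoremip1} and the quantitative Runge approximation of Theorem~\ref{theoremip2}, which together generate the $e^{c/\epsilon}$ factor in Proposition~\ref{propositionip1} --- have already been established, so no substantial obstacle arises here. The one point deserving attention is simply to confirm that the exponential growth $e^{\varkappa|\Im\xi_j|}$ of the geometric optics solutions balances against the $\epsilon^\beta$ and $e^{c/\epsilon}$ factors in the same way as in the Dirichlet case; it does, precisely because Proposition~\ref{propositionip1} was set up to mirror \eqref{2.4.1}.
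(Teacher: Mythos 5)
Your proposal is correct and coincides with the paper's own proof, which consists precisely of the remark that one repeats the proof of Theorem \ref{mt} step by step with the integral inequality of Proposition \ref{propositionip1} in place of \eqref{2.4.1} and the impedance maps $\mathscr{N}_{q_j,\lambda}^0$ in place of $\Lambda_{q_j,\lambda}^0$. The only point worth flagging is the bookkeeping of the factor $\lambda^2$ coming from \eqref{3.5}, which must be absorbed into the stated prefactor $\tilde{\mathbf{n}}_\lambda$; this is an issue already present in the paper's own statement rather than a gap in your argument.
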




\begin{thebibliography}{99}


\bibitem{BSW} (MR3448345) [10.1137/15M102530X]
\newblock D. Baskin, E. A. Spence and J. Wunsch,
\newblock \doititle{Sharp high-frequency estimates for the Helmholtz equation and applications to boundary integral equations},
\newblock \emph{SIAM J. Math. Anal.}, \textbf{48} (2016), 229-267.

\bibitem{BC} (MR4596042) [10.1007/s42985-023-00242-2]
\newblock M. Bellassoued and M. Choulli,
\newblock \doititle{Global logarithmic stability of the Cauchy problem for anisotropic wave equations},
\newblock \emph{Partial Differ. Equ. Appl.}, \textbf{4} (2023), 44 pp.

\bibitem{CDR} (MR3427672) [10.1016/j.jde.2015.10.007]
\newblock P. Caro, D. Dos Santos Ferreira and A. Ruiz,
\newblock \doititle{Stability estimates for the Calder\'on problem with partial data},
\newblock \emph{J. Differential Equations}, \textbf{260} (2016), 2457-2489.

\bibitem{CZY} (MR3478937) [10.2140/pjm.2016.282.293]
\newblock D. Chen, T. Zheng and H. Yang,
\newblock \doititle{Estimates of the gaps between consecutive eigenvalues of Laplacian},
\newblock \emph{Pacific J. Math.}, \textbf{282} (2016), 293-311.

\bibitem{Ch2} (MR2554831) [10.1007/978-3-642-02460-3]
\newblock M. Choulli,
\newblock \emph{Une Introduction aux Probl\`emes Inverses Elliptiques et Paraboliques},
\newblock Springer-Verlag, Berlin, 2009.

\bibitem{CKS} [10.14529/mmp150305]
\newblock M. Choulli, Y. Kian and E. Soccorsi,
\newblock \doititle{Double logarithmic stability in the identification of a scalar potential by partial elliptic Dirichlet-to-Neumann map},
\newblock \emph{Bulletin SUSU MMCS}, \textbf{8} (2015), 78-94 (in memory of Alfredo Lorenzi).

\bibitem{DKSU} (MR2534094) [10.1007/s00222-009-0196-4]
\newblock D. Dos Santos Ferreira, C. E. Kenig, M. Salo and G. Uhlmann,
\newblock \doititle{Limiting Carleman weights and anisotropic inverse problems},
\newblock \emph{Invent. Math.}, \textbf{178} (2009), 119-171.

\bibitem{GRZ} (MR4369055) [10.3934/ipi.2021049]
\newblock M. A. Garc\' \i a-Ferrero, A. R\"uland and W. Zato\'n,
\newblock \doititle{Runge approximation and stability improvement for a partial data Calder\'on problem for the acoustic Helmholtz equation},
\newblock \emph{Inverse Probl. Imaging}, \textbf{16} (2022), 251-281.

\bibitem{Gr} (MR775683)
\newblock P. Grisvard,
\newblock \emph{Elliptic Problems in Nonsmooth Domains},
\newblock Monographs and Studies in Mathematics, 24. Pitman, Boston, MA, 1985.

\bibitem{KU} (MR3950020) [10.1016/j.matpur.2019.02.017]
\newblock K. Krupchyk and G. Uhlmann,
\newblock \doititle{Stability estimates for partial data inverse problems for Schr\"odinger operators in the high frequency limit},
\newblock \emph{J. Math. Pures Appl.}, \textbf{126} (2019), 273-291.

\bibitem{LM} (MR0350177)
\newblock J. L. Lions and E. Magenes, E.
\newblock \emph{Non-homogeneous boundary value problems and applications. Vol. I. Translated from the French by P. Kenneth. }
\newblock Die Grundlehren der mathematischen Wissenschaften. Band 181. Berlin-Heidelberg-New York: Springer-Verlag. XVI, 357 p. (1972). 

\bibitem{PS} (MR2674637) [10.1051/cocv/2009014]
\newblock Y. Privat, and M. Sigalotti,
\newblock \doititle{The squares of the Laplacian-Dirichlet eigenfunctions are generically linearly independent},
\newblock \emph{ESAIM Control Optim. Calc. Var.}, \textbf{16} (2010), 794-805.

\bibitem{RS} (MR4031236) [10.1093/imrn/rnx301]
\newblock A. R\"uland and M. Salo,
\newblock \doititle{Quantitative Runge approximation and inverse problems},
\newblock \emph{Int. Math. Res. Not. IMRN}, \textbf{20} (2019), 6216-6234.



\end{thebibliography}
\end{document}